\newtheorem{theorem}{Theorem}[section]
\newtheorem{lemma}[theorem]{Lemma}
\newtheorem{proposition}[theorem]{Proposition}
\newtheorem{corollary}[theorem]{Corollary}
\theoremstyle{definition}
\newtheorem{definition}[theorem]{Definition}
\theoremstyle{remark}
\newtheorem{remark}[theorem]{Remark}
\numberwithin{equation}{section}
\newcommand{\STOPCRIT}[1]{%
    \STATE\algorithmicif\ {#1}\ \algorithmicthen\ \algorithmicreturn\ %
}
\DeclareFontFamily{U}{mathx}{\hyphenchar\font45}
\DeclareFontShape{U}{mathx}{m}{n}{
     <5> <6> <7> <8> <9> <10>
      <10.95> <12> <14.4> <17.28> <20.74> <24.88>
      mathx10
      }{}
\DeclareSymbolFont{mathx}{U}{mathx}{m}{n}
\DeclareMathAccent{\mywidehat}{0}{mathx}{"70} 
\DeclareMathAccent{\mywidecheck}{0}{mathx}{"71}
\newcommand{\ind}[1]{\mathds{1}_{\{#1\}}}
\DeclareMathOperator{\supp}{supp}
\DeclareMathOperator{\Span}{span}
\DeclareMathOperator{\argmax}{argmax}
\newcommand\etal{et~al\@ifnextchar.{}{.\@\xspace}}
\newcommand\eg{e.g\@ifnextchar.{}{.\@\xspace}}
\newcommand\ie{i.e\@ifnextchar.{}{.\@\xspace}}
\newcommand\confer{cf\@ifnextchar.{}{.\@\xspace}}
\newcommand\etc{etc\@ifnextchar.{}{.\@\xspace}}
\newcommand{\cAmtree}{\mathfrak{A}_{\mathrm{mtree}}}
\newcommand{\per}{\mathrm{per}}
\newcommand{\E}{\mathbb{E}}
\newcommand{\R}{\mathbb{R}}
\newcommand{\N}{\mathbb{N}}
\newcommand{\cA}{\mathcal{A}}
\newcommand{\cB}{\mathcal{B}}
\newcommand{\cD}{\mathcal{D}}
\newcommand{\cJ}{\mathcal{J}}
\newcommand{\cL}{\mathcal{L}}
\newcommand{\cM}{\mathcal{M}}
\newcommand{\cN}{\mathcal{N}}
\newcommand{\cO}{\mathcal{O}}
\newcommand{\cR}{\mathcal{R}}
\newcommand{\cX}{\mathcal{X}}
\newcommand{\cY}{\mathcal{Y}}
\newcommand{\cZ}{\mathcal{Z}}
\newcommand{\fA}{\mathfrak{A}}
\newcommand{\bb}{\mathbf{b}}
\newcommand{\bbf}{\mathbf{f}}
\newcommand{\bg}{\mathbf{g}}
\newcommand{\br}{\mathbf{r}}
\newcommand{\bu}{\mathbf{u}}
\newcommand{\bv}{\mathbf{v}}
\newcommand{\bw}{\mathbf{w}}
\newcommand{\bx}{\mathbf{x}}
\newcommand{\bA}{\mathbf{A}}
\newcommand{\bB}{\mathbf{B}}
\newcommand{\bE}{\mathbf{E}}
\newcommand{\bcJ}{\bm{\cJ}}
\newcommand{\blambda}{\boldsymbol \lambda}
\newcommand{\bLambda}{\boldsymbol \Lambda}
\newcommand{\bpsi}{\boldsymbol \psi}
\newcommand{\bPsi}{\boldsymbol \Psi}
\newcommand{\bXi}{\boldsymbol \Xi}
\newcommand{\hatbcJ}{\hspace{0.9ex}\mywidehat{\hspace{-0.9ex}\bm{\cJ}}}
\newcommand{\checkbcJ}{\hspace{0.9ex}\mywidecheck{\hspace{-0.9ex}\bm{\cJ}}}
\newcommand{\checkbpsi}{\mywidecheck{\bpsi}}
\newcommand{\hatbpsi}{\mywidehat{\bpsi}}
\newcommand{\checkbPsi}{\mywidecheck{\bPsi}}
\newcommand{\hatbPsi}{\mywidehat{\bPsi}}
\newcommand{\checkbLambda}{\mywidecheck{\bLambda}}
\newcommand{\hatbLambda}{\mywidehat{\bLambda}}
\newcommand{\checkbXi}{\mywidecheck{\bXi}}
\newcommand{\eps}{\varepsilon}
\newcommand{\eval}[2]{\langle #1 , #2 \rangle}
\newcommand{\norm}[1]{\lVert #1\rVert}
\NewDocumentCommand{\errestappr}{ m m o}{%
	\IfValueTF{#3}
	{\Delta_{#1,#3}(#2)}
	{\Delta_{#1}(#2)}
}
\title[Adaptive Reduced Basis Methods]{Reduced Basis Methods With Adaptive Snapshot Computations}
\thanks{This work has partly been supported by the Deutsche Forschungsgemeinschaft (DFG) within the Research Training Group (Graduiertenkolleg) GrK1100 \emph{Modellierung, Analyse und Simulation in der Wirtschaftsmathematik} at Ulm University.}
\date{\today}
\author{Mazen Ali}
\address{
Mazen Ali,
Institute for Numerical Mathematics,
University of Ulm,
Helm\-holtz\-strasse 20,
D-89069 Ulm, Germany}
\email{mazen.ali@uni-ulm.de}
\author{Kristina Steih}
\address{
Kristina Steih,
Institute for Numerical Mathematics,
University of Ulm,
Helm\-holtz\-strasse 20,
D-89069 Ulm, Germany}
\email{kristina.steih@uni-ulm.de}
\author{Karsten Urban}
\address{
Karsten Urban,
Institute for Numerical Mathematics,
University of Ulm,
Helm\-holtz\-strasse 20,
D-89069 Ulm, Germany}
\email{karsten.urban@uni-ulm.de}
\subjclass[2010]{%
35B10, 
41A30, 
41A63, 
65N30, 
65Y20
}
\keywords{Reduced Basis Method, adaptivity, wavelets}
\begin{document}
\setdefaultleftmargin{1em}{2em}{}{}{}{}

\begin{abstract}
We use asymptotically optimal \emph{adaptive} numerical methods (here specifically a wavelet scheme) for  snapshot computations within the offline phase of the Reduced Basis Method (RBM). The resulting discretizations for each snapshot (i.e., parameter-dependent) do not permit the standard RB `truth space', but allow for error estimation of the RB approximation with respect to the exact solution of the considered parameterized partial differential equation. 	

The residual-based a posteriori error estimators are computed by an adaptive dual wavelet expansion, which allows us to compute a surrogate of the dual norm of the residual. The resulting adaptive RBM is analyzed. We show the convergence of the resulting adaptive Greedy method. Numerical experiments for stationary and instationary problems underline the potential of this approach.
\end{abstract}

\maketitle

\section{Introduction}
Reduced Basis Methods (RBMs) have nowadays become a widely accepted and used tool for realtime and/or multi-query simulations of parameterized partial differential equations (PPDEs). By using an offline-online decomposition, the main idea is to use a high fidelity, detailed, but costly numerical solver offline to compute approximations to the PPDEs for certain parameter values. The selection of these parameters is done by an error estimator which is efficiently computable and thus allows one to determine the `worst' parameters out of a possibly rich so-called training set. For those `bad' parameters, the high fidelity model is used in order to determine approximations, so-called \emph{snapshots}. These few snapshots form the reduced basis which is then capable to produce approximations for any new parameter value extremely rapidly (online). The error estimator can also be used online in order to certify this RB approximation. Both the variety of applications and the amount of recent results in RBMs go well beyond the scope of {this} introduction. 

The success of this `classical' RBM also relies on the assumption that the high fidelity model in the offline phase is sufficiently accurate \emph{for all} parameters. The same discretization is used for all snapshots. This may have some possible drawbacks: (1) If this high fidelity model is not accurate enough, also the RB-approximation cannot be good. (2) The other extreme is that a sufficiently accurate approximation for all possible parameters may require a high fidelity model whose dimension is too large even for an offline phase. (3) The error estimate usually controls the difference to the high fidelity solution, not w.r.t.\ the exact solution of the PPDE (with one recent exception in \cite{Masa} to be discussed below).

On the other hand, there are adaptive numerical methods available that guarantee an approximation of the exact solution of a PDE within a preselected tolerance. Such methods can, e.g., be based upon finite element or wavelet discretizations, \cite{CDD01,CDD02,NSV,Urban:WaveletBook}. We use such an adaptive method (we choose wavelets) for computing snapshots in the offline phase. This offers some features that we think are of interest, namely: (a) We use different discretizations for each parameter allowing for a (asymptotically) minimal amount of work for any chosen parameter. (b) We can bound the RB error w.r.t.\ the exact solution of the PPDE. (c) We introduce a new surrogate for the infinite-dimensional exact residual as well as its dual norm by using a dual wavelet expansion. The resulting error estimator is shown to be online-efficient.
	
Using adaptivity (or different discretizations) in the offline phase implies some additional sophistication of the method, at least from the conceptual point of view. The question arises under which circumstances such adaptivity might pay off. It is known, e.g., from \cite{CDD01} that adaptive methods show faster convergence rates if the Besov regularity of the solution in a certain scale exceeds the Sobolev regularity, see also \cite{Urban:WaveletBook}. For the offline RB setting this means that the regularity of the solution \emph{with respect to the parameter} is of crucial importance. If one single discretization is sufficient for approximating the solution $u(\mu)$ well enough for all possible parameters $\mu$, then adaptivity does not to make sense. On the other hand, if $u(\mu)$ significantly differs w.r.t.~$\mu$, a joint discretization may be too fine. This is, e.g., the case if $u(\mu)$ has {strong} parameter-dependent local effects. Our numerical examples are guided by these considerations.

The adaptive offline snapshot computation gives rise to some implications that we discuss. Once having an adaptive method at hand, the question of convergence and a posteriori error analysis arises. Even though the dual norm of the residual is a rigorous error bound, its computation would require to solve an infinite-dimensional problem. We introduce a surrogate by using the expansion of the residual in terms of the dual wavelet basis which in turn admits a characterization of the dual norm.

We would like to mention that there is existing literature for RBM and various flavors of adaptivity, e.g., sampling set randomization, adaptive refinement of training sets, hp-RBM, time-partitioning etc., see, e.g., \cite{MR3043486,MR2452388,Carlberg14,MR2882313,MR3177844,MR3123826}, just to mention a few.

The remainder of this paper is organized as follows. In Section \ref{sec:rbm}, we review the main facts of the `classical' Reduced Basis Method. We set the framework for PPDEs and collect those facts that are needed here. Section \ref{sec:adaptiverbm} is devoted to the use of adaptive methods for the generation of the reduced basis in the offline phase. At this point, we only require the availability of a certain adaptive solver \textbf{SOLVE} and do not specify which specific method is used. We have used an Adaptive Wavelet Galerkin Method (AWGM) which is briefly described  in Section \ref{sec:awgm}. Within the adaptive RB framework in Section \ref{sec:adaptiverbm}, however, it is not  necessary to fix the precise adaptive method. There, we just assume that a surrogate for the residual-based error estimator is computable. Such a surrogate is described in Section \ref{sec:awgm} using the dual wavelet system. In Section \ref{sec:numerics}, we describe numerical experiments for two different examples, namely heat conduction in a thermal block with several local heat sources and time-dependent convection-diffusion-reaction using a space-time variational formulation. These experiments do not only confirm theoretical findings quantitatively but also indicate the potential of the new approach.

\section{Reduced Basis Methods (RBMs)}\label{sec:rbm}
In order to highlight differences and challenges of using adaptively computed basis functions within the Reduced Basis Method (RBM), it makes sense to briefly review `standard' RBMs.

\subsection{Parameterized Partial Differential Equations (PPDEs)}\label{Sec:2.1}
Let $\Omega\subset\R^n$ be a bounded domain on which we consider function spaces $\cX = \cX(\Omega)$, $\cY = \cY(\Omega)$ arising from a variational formulation of a partial differential equation. Denoting by $\cD \subset \R^{P}$ the set of parameters, this means that we consider a differential operator $\cB: \cD \times \cX \to \cY'$ resp.\ a bounded bilinear form $b:\cX\times\cY\times\cD \to \R$, where $b(w,v;\mu) := \eval{\cB(\mu)w}{v}_{\cY'\times\cY}$ for $w\in\cX$, $v\in\cY$ and $\mu\in\cD$. In particular, we assume the existence of constants $\gamma(\mu)\le\gamma^\text{UB}<\infty$ such that
\begin{equation}\label{Eq:bBound}
	b(w,v; \mu) \le \gamma(\mu)\, \| w\|_\cX\, \| v\|_\cY, \qquad w\in\cX, v\in\cY.
\end{equation}
For a given $f(\mu)\in\cY'$, the problem is then to find a $u(\mu) \in \cX$ such that $\cB(\mu)\, u(\mu) = f(\mu)$  in  $\cY'$, or, in variational form
\begin{equation} \label{eq:variationalproblem}
 	b(u(\mu), v;\mu) = f(v;\mu) \qquad \forall\, v \in \cY,
\end{equation}
where $f(v;\mu) := \eval{f(\mu)}{v}_{\cY'\times\cY}$.

We assume that \eqref{eq:variationalproblem} is well-posed for all $\mu\in\cD$, which is equivalent to the so-called \emph{Ne\v{c}as condition} on $b(\cdot,\cdot; \mu)$, \cite{Necas,NSV}, \ie, there exist \emph{inf-sup constants} $\beta(\mu)$ and a lower bound $\beta_\text{LB}$ such that
\begin{equation}\label{Necas} 
	\beta(\mu)  
	= \inf_{w \in \cX}\sup_{v \in \cY} \frac{b(w,v;\mu)}{\norm{w}_{\cX}\norm{v}_{\cY}} 
	= \inf_{w \in \cY}\sup_{u \in \cX} \frac{b(w,v;\mu)}{\norm{w}_{\cX}\norm{v}_{\cY}} 
	\geq \beta_\text{LB} > 0 
\end{equation}
for all $\mu \in \cD$.

\begin{remark} 
(a) It is worth mentioning that \eqref{eq:variationalproblem} includes elliptic problems, where, e.g., $\cX = \cY = H^1_0(\Omega)$ (or other boundary conditions), $b(\cdot,\cdot;\mu)$ being coercive with constant $\alpha(\mu) > 0$, as well as parabolic initial value problems in space-time formulation, \ie, with the Bochner spaces $\cX = W_{0}(0,T;V) := \{u \in L_{2}(0,T;V) : u_{t} \in L_{2}(0,T;V'), u(0)=0 \in H\}$, $\cY = L_{2}(0,T;V)$, $V=H^1_0(\Omega)$, so that $\cX \subsetneqq \cY$, and also time-periodic problems, see also Section \ref{sec:numerics} below. \\
(b) Instead of a space-time formulation for a parabolic initial value problem, one could also use a standard time-stepping scheme. There are corresponding RBMs available for such problems \cite{GreplThesisPaper, PODGreedy}. In principle, our subsequent findings can be extended also to those settings, but in order to keep notations simple, we restrict ourselves to \eqref{eq:variationalproblem}.  
\end{remark}

For later reference, we consider the \emph{residual}, which is defined for any $w\in\cX$ by
\begin{equation}\label{Def:Res}
	r_b(w;\mu):= f(\mu)-\cB(\mu)w \in\cY', 
\end{equation}
i.e., $\langle r_b(w;\mu),v \rangle_{\cY'\times\cY} := f(v;\mu) - b(w, v;\mu)$, $v\in\cY$. It is then straightforward and well-known that
\begin{equation}\label{Eq:ResEq}
	\beta(\mu)\, \| u(\mu)-w\|_\cX  
	\le  \| r_b(w;\mu) \|_{\cY'} 
	\le \gamma(\mu)\, \| u(\mu)-w\|_\cX.
\end{equation}

\subsection{Some Basics on `Classical' RBMs}
Any numerical scheme for the solution of \eqref{eq:variationalproblem} involves a discretization of $\cX$, $\cY$. In a standard RB setting these finite-dimensional discrete spaces, the so-called \emph{truth spaces}, are denoted by $\cX^{\cN}\subset \cX$, $\cY^{\cN} \subset \cY$.\footnote{We always use calligraphic symbols for high-(even $\infty$)-dimensional spaces.}
Then, the following Petrov-Galerkin projection is considered:
\begin{equation}\label{eq:truthproblem}
 	\text{Find }u^{\cN}(\mu) \in \cX^{\cN}:\qquad b(u^{\cN}(\mu), v;\mu) = f(v;\mu) \qquad \forall\, v \in \cY^{\cN},
\end{equation}
where inf-sup-stability is assumed, i.e.,
\begin{equation}\label{Eq:LBB-cN}
	\beta^\cN(\mu) := 
	\inf_{w^\cN\in X^\cN} \sup_{v^\cN\in Y^\cN} \frac{ b(w^\cN, v^\cN; \mu)}{\| w^\cN\|_\cX\, \| v^\cN\|_\cY} 
	\ge \tilde\beta >0,
\end{equation}
with $\tilde\beta$ independent of $\cN$ as $\cN\to\infty$. 
Often, $\cX^{\cN}$, $\cY^{\cN}$ are spanned by \emph{local basis functions} such as finite elements or wavelets and their dimension $\cN = \dim(\cX^\cN)=\dim(\cY^\cN)$\footnote{For simplicity, we assume that trial and test spaces are of the same dimension. Otherwise, one would need to use a least squares approach.} is usually large, so that solving \eqref{eq:truthproblem} repeatedly for many different parameters would be too costly or realtime computations would be impossible.

\begin{remark}[Fixed discretization]
We stress that in the standard RB setting, the spaces $\cX^{\cN}$, $\cY^{\cN}$ are a-priorily \emph{fixed} and are the \emph{same} for all parameters $\mu \in \cD$. Moreover, it is assumed that the discretization error $\norm{u(\mu) - u^{\cN}(\mu)}_\cX$ is negligibly small for all $\mu \in \cD$. Thus, typical RBMs view $u^{\cN}(\mu)$ as `truth', which means, \eg, that all error estimates are typically w.r.t.\ $u^{\cN}(\mu)$ and do \emph{not} take $u(\mu)$ into account. Just recently a first paper appeared introducing error bounds w.r.t.\ $u(\mu)$ in a specific case {of symmetric coercive problems, \cite{Masa}. To the best of our knowledge, the techniques in \cite{Masa} are at least not immediately applicable to non-symmetric Petrov-Galerkin-type problems \eqref{eq:variationalproblem} using adaptive discretizations.} 
\end{remark}

The idea behind (standard) RBMs is the construction of low-dimensional spaces $X_{N}^{\cN} \subset \cX^{\cN}$, $Y_{N}^{\cN} \subset \cY^{\cN}$ (which may also be parameter-dependent, i.e., $Y_{N}^{\cN}(\mu)$, see \cite{MR2281777} and also our construction below), $N \ll \cN$,\footnote{Low-dimensional spaces are denoted by usual (non calligraphic) symbols.} from so-called \emph{snapshots}, \ie, solutions of \eqref{eq:truthproblem} for selected parameters $\mu^1, \ldots, \mu^N$, \ie,
\begin{equation}\label{XNN}
	X_{N}^{\cN} := \Span\{u^{\cN}(\mu^{i}), i=1,\dots,N\} =: \Span\{\zeta_{i}^{\cN},i=1,\dots,N\},
\end{equation}
and $Y_{N}^{\cN} := \Span\{\eta_{i}^{\cN},i=1,\dots,N\}$ is such that the $N$-dimensional reduced problem
\begin{equation}\label{eq:RB-infsup}
	\text{Find }u_{N}^{\cN}(\mu) \in X_{N}^{\cN}:\qquad b(u_{N}^{\cN}(\mu), v;\mu) = f(v;\mu) \qquad \forall\, v \in Y_{N}^{\cN}
\end{equation}
is stable as $N\to\infty$. Stability in the discrete setting is ensured by the fulfillment of a \emph{discrete inf-sup condition} \cite{Babuska}, i.e.,
\begin{equation}\label{Eq:LBB-N}
	\beta^\cN_N(\mu) := 
	\inf_{w_N\in X_N^\cN} \sup_{v_N\in Y_N^\cN} \frac{ b(w_N, v_N; \mu)}{\| w_N\|_\cX\, \| v_N\|_\cY} 
	\ge \bar\beta >0,
\end{equation}
with $\bar\beta$ independent of $N$ as $N\to\infty$.  
We abbreviate
\begin{equation}\label{Def:SN}
	S_N := \{ \mu^1, \ldots , \mu^N\}
\end{equation}
as the set of (sample) parameter values corresponding to the snapshots. The system $\{\zeta_{i}^{\cN},i=1,\dots,N\}$ may arise by orthonormalization of the snapshots.

The inf-sup condition \eqref{Necas} gives rise to rigorous a posteriori \emph{error bounds}, \ie,  quantities $\Delta_{N}^{\cN}(\mu)$ with
\begin{equation}\label{eq:trutherrorbound}
	\norm{e^{\cN}_{N}(\mu)}_{\cX} := \norm{u^{\cN}(\mu) - u^{\cN}_{N}(\mu)}_{\cX} \leq \Delta_{N}^{\cN}(\mu) 
		= \frac{\norm{r_{b}^{\cN}(u_N(\mu);\mu)}_{\cY'}}{\beta{^\cN}(\mu)},
\end{equation}
where $r_{b}^{\cN}(u_N(\mu);\mu):\cY^{\cN}\to\R$ is the \emph{`truth' residual} with respect to the reduced solution, \ie,
\[
	\langle r_{b}^{\cN}(u_N(\mu);\mu), v\rangle_{\cY'\rtimes\cY} := f(v;\mu) - b(u_{N}(\mu),v;\mu), \qquad \forall\,v \in \cY^\cN.
	\] 
Note, that $\Delta_{N}^{\cN}(\mu)$ can be computed online efficient, i.e., with cost independent of $\cN$. In \eqref{eq:trutherrorbound}, $\beta^\cN(\mu)$ denotes the inf-sup constant of $b(\cdot,\cdot;\mu)$ w.r.t.\ the truth spaces, \ie, \eqref{Eq:LBB-N} with $X_N^\cN$, $Y_N^\cN$ replaced by $\cX^{\cN}$, $\cY^{\cN}$, respectively. We call this RB-standard estimator \emph{residual-based.}
The involved dual norms $\norm{r_{b}^{\cN}(u_N(\mu);\mu)}_{\cY'}$ are computed with the help of the \emph{Riesz representations}. 
Using inf-sup-stability and continuity yields -- similar to \eqref{Eq:ResEq} --
that the error estimator and the error are in fact equivalent{\footnote{{One can improve this estimate by using the continuity $\gamma^\cN(\mu)$ of $b(\cdot,\cdot;\mu)$ on $\cX^\cN$, $\cY^\cN$.}}}:
\begin{equation}\label{Eq:Equiv}
	 \| e_N^\cN(\mu) \|_\cX \le \Delta_N^\cN(\mu) \le \frac{\gamma(\mu)}{\beta^{{\cN}}(\mu)} \| e_N^\cN(\mu) \|_\cX.
\end{equation}

\begin{remark}\label{Rem:Repro}
	(a) We point out (for later reference in \S \ref{sec:Repro} below) that $u_N^\cN(\mu)=u^\cN(\mu)$ for all $\mu\in S_N$, i.e., snapshots are reproduced by the standard RBM. In fact, we have Petrov-Galerkin orthogonality, i.e., $b(u^\cN(\mu)-u_N^\cN(\mu), v_N; \mu)=0$ for all $v_N\in Y_N^\cN$. Since $u^\cN(\mu)\in X_N^\cN$ for $\mu\in S_N$, we have that $e^\cN_N(\mu) = u^\cN(\mu)-u_N^\cN(\mu)\in X_N^\cN$ and then \eqref{Eq:LBB-N} yields
	$
	\bar\beta \| e_N^\cN(\mu)\|_\cX \le \sup_{v_N\in Y_N^\cN} \frac{b(e_N^\cN(\mu), v_N; \mu)}{\| v_N\|_\cY} = 0,
	$
	i.e., $u^\cN(\mu)=u^\cN_N(\mu)$ for all $\mu\in S_N$.
	
	(b) The latter argument gives also rise to a straightforward estimate for the error w.r.t.\ the exact snapshot $u(\mu)$. In fact, for $\mu\in S_N$, triangle inequality yields $\| u(\mu)-u_N^\cN(\mu)\|_\cX \le \| u(\mu)-u^\cN(\mu)\|_\cX$, i.e., reproduction of the exact snapshot up to the tolerance of the truth approximation. Of course, this upper bound cannot be evaluated a posteriori in an efficient way. 
	\qed
\end{remark}

\subsection{Basis Construction via the Greedy Algorithm}

The choice of the RB basis functions $\zeta_{i}^{\cN}$, $i=1, \ldots ,N$, \ie, the selection of the corresponding parameter values $\mu^1, \ldots , \mu^N$, is often done using a Greedy algorithm: given $\mu^1, \ldots, \mu^n$, $n<N$, the next parameter value $\mu^{n+1}$ is chosen as
$$
	\mu^{n+1} = \arg \max_{\mu\in\cD_{\text{train}}} \Delta_{n}^{\cN}(\mu),
$$
where $\cD_{\text{train}} \subset \cD$ is a finite \emph{training set}. The key point for the efficiency of this approach is the fact that the greedy selection is done w.r.t.\ the error estimator {(which can be computed with cost independent of $\cN$)}. Only for the chosen parameter values $\mu^1, \ldots ,\mu^N$ the (expensive) truth has to be computed. The corresponding algorithm is displayed in Algorithm \ref{alg:TruthGreedy}. Note that this procedure is also called \emph{weak Greedy training}, in contrast to an (inefficient) \emph{strong} Greedy, where the true error $\| e_N^\cN(\mu)\|_\cX$ is used in line \ref{Alg:mustar}. More precisely, if $\| e_N^\cN(\mu)\|_\cX\le \gamma \Delta^\cN_N$ for some $\gamma>0$, then Algorithm \ref{alg:TruthGreedy} is called \emph{$\gamma$-weak Greedy}.

\begin{algo}
\caption{[$X_{N}^{\cN}$] = \textbf{Greedy}[tol, $N_{\max}$, $\cD_{\text{train}}$]}
\label{alg:TruthGreedy}
\begin{algorithmic}[1]
\STATE $S_0:=\emptyset$
\FOR{$N=1,\dots,N_{\max}$}
	\STATE Choose $\mu^{N} := \argmax_{\mu \in \cD_{\text{train}}}\Delta_{N-1}^{\cN}(\mu)$.\label{Alg:mustar}
	\STOPCRIT{$\Delta_{N-1}^{\cN}(\mu^{N}) < \text{tol}$}
	\STATE $S_N:=S_{N-1}\cup \{ \mu^N\}$.
	\STATE Compute snapshot $u^{\cN}(\mu^{N})$, update basis: $X_{N}^{\cN} = X_{N-1}^{\cN}{\oplus\Span}\{ u^{\cN}(\mu^{N})\}$.
	\STATE $N \gets N+1$.
\ENDFOR
\end{algorithmic}
\end{algo}
If Algorithm \ref{alg:TruthGreedy} stops with $N<N_{\max}$, then -- by \eqref{Eq:Equiv} -- we have
$$
	\max_{\mu\in \cD_{\text{train}}} \| e^\cN_N(\mu)\|_\cX
	\le \max_{\mu\in \cD_{\text{train}}} \Delta_{N}^{\cN}(\mu)
	< \text{tol},
$$
which means that the worst case error w.r.t.\ the parameter can be well-controlled provided that $\cD_{\text{train}}$ represents the full $\cD$ ``sufficiently well''.

\begin{remark}\label{Rem:Opt}
	{As an alternative to the Greedy algorithm one could determine $\mu^{N}$ by nonlinear optimization, \cite{MR2452388,Zeeb}.}
\end{remark}

\subsection{Offline-online Decomposition}\label{Sec:OO}

A crucial assumption for the efficiency of the RBM (in particular the efficient computation of $u^\cN_N(\mu)$ and of $\Delta_N^\cN(\mu)$) is that the bilinear form and the right-hand side are \emph{affine in the parameter}, i.e.,
\begin{equation}\label{eq:affinedecomp}
	b(u,v;\mu) = \sum_{q=1}^{Q_{b}} \theta_{b}^{(q)}\!(\mu)\, b^{(q)}(u,v), 
	\qquad 
	f(v;\mu) = \sum_{q=1}^{Q_{f}} \theta_{f}^{(q)}\!(\mu)\, f^{(q)}(v).
\end{equation}
Techniques like the Empirical Interpolation Method (EIM) \cite{Barrault2004667} can construct an approximation of such an affine decomposition if assumption \eqref{eq:affinedecomp} is not met. Affine forms as in \eqref{eq:affinedecomp} enable an efficient \emph{offline-online} decomposition of the calculations in the following sense: the parameter-independent components of the linear system, namely $\bB_N^{(q)}:= \left[b^{(q)}(\zeta_{i}^{\cN},\xi_{j}^{\cN})\right]_{i,j=1,\dots,N}$, $q=1,\dots,Q_{b}$, and $\bbf_N^{(q)} := \left[f^{(q)}(\xi_{j}^{\cN})\right]_{j=1,\dots,N}$, $q=1,\dots,Q_{f}$, can be precomputed (offline) so that the assembly and solution of the reduced system $\bB_N(\mu) \bu_N(\mu) = \bbf_N(\mu)$ with 
\begin{equation}\label{eq:LGSN}
	\bB_N(\mu) := \sum_{q=1}^{Q_{b}}\theta_{b}^{(q)}\!(\mu)\,\bB^{(q)},\qquad 
	\bbf_N(\mu) := \sum_{q=1}^{Q_{f}}\theta_{f}^{(q)}\!(\mu)\,\bbf^{(q)}, 
\end{equation}
for a new parameter $\mu\not\in S_N$ then only involves  $N$-dimensional matrix-vector products and can be done online (with complexity independent of $\cN$). Since $\bB_N(\mu)\in\R^{N\times N}$ is usually densely populated, the linear system for the reduced system to determine
$$
	u_N^\cN(\mu) = \sum_{i=1}^N u_N^{(i)}(\mu)\, \zeta_i^\cN,
	\qquad
	\bu_N(\mu) = [ u_N^{(i)}(\mu) ]_{i=1,\ldots ,N},
$$
can be solved with $\cO(N^3)$ operations -- independent of $\cN\gg N$. 
Also the error estimate can be computed online-efficient (independent of $\cN$).  
\section{Adaptive Reduced Basis Generation}\label{sec:adaptiverbm}
In this section, we describe those issues that arise when avoiding fixed truth spaces $\cX^\cN$ and $\cY^\cN$ and using adaptive methods based upon the infinite-dimensional spaces $\cX$ and $\cY$ instead. We assume that we have the following routine \textbf{SOLVE}  for the solution of a general operator equation $\cA x=b$ in $\cY'$, $\cA:\cX\to\cY'$ being a linear operator, at our disposal (not only for the specific operator $\cB(\mu)$ introduced above). The approximation produced by \textbf{SOLVE} will be measured in terms of  an error quantity $\E(x,\tilde x):\cX\times\cX\to\R^+$, $x, \tilde x\in\cX$, to be detailed later.

\smallskip
\noindent
\framebox{\begin{minipage}{0.97\textwidth}
	\textbf{SOLVE}: $[\cA, b, \eps]\mapsto x^\eps$: Approximation of $x:=\cA^{-1}b$ with $\E(x,x^\eps)\le \eps$ and in {optimal} complexity {in the sense of nonlinear approximation (see Theorem \ref{theo:quasioptimalAWGM})}.
	\end{minipage}}	
\smallskip

In Section \ref{sec:awgm}, we detail one possibility to realize \textbf{SOLVE} by an Adaptive Wavelet Galerkin Method (AWGM), but one could also use other schemes with the above properties such as adaptive finite element methods, see, e.g., \cite{NSV} for an overview.

\subsection{Adaptive Snapshot Computation}

With such an (adaptive) numerical solver \textbf{SOLVE} at our disposal, we  compute so-called \emph{$\eps$-exact residual approximations} $u^{\eps}(\mu)$ of $u(\mu) \in \cX$, such that
\begin{equation}\label{Eq:approximate}
	\E(u(\mu), u^{\eps}(\mu))  
	\leq \eps(\mu),
\end{equation}
where the approximation tolerance $\eps(\mu)$ depends on $\mu$ and will be specified later. This means in particular that there is no common `truth' space that all snapshots belong to -- but each approximation $u^\eps(\mu)$ belongs to a space $\cX_\mu^\eps$ that is determined adaptively (and whose dimension $\cN(\mu,\eps)$ is `large' from an RB point of view, but minimal in an adaptive approximation theory sense). 
The lack of common truth spaces for all parameters necessitates a re-interpreta\-tion of some RB ingredients which we will describe now. The reduced space is now spanned by \emph{approximate} snapshots computed during the offline training phase, \ie,  
\begin{equation}\label{eq:AdaptiveRBSpace}
 	X_{N}^\eps := 
	\Span\{\zeta_{i}^\eps,\, i=1,\dots,N\},
	\qquad
	\zeta_{i}^\eps := u^{\eps}(\mu^{i})\,\,\,
	\textrm{(or by orthogonalization)}
\end{equation}
and the reduced solution $u_{N}^\eps(\mu) \in X_{N}^\eps$ is the Petrov-Galerkin projection onto this space and the corresponding reduced inf-sup-stable, possibly parameter-dependent test space $Y_{N}^\eps(\mu)$ in the sense that
$$
	\inf_{w_N\in X^\eps_N} \sup_{v_N\in Y_N^\eps(\mu)} \frac{b(w_N, v_N; \mu)}{\| w_N\|_\cX\, \| v_N\|_\cY}
	\geq \beta_N(\mu) \geq \beta_{\text{LB}} > 0,\quad \forall\mu\in\cD.
$$
Note, however, that the value of $\beta_N(\mu)$ has to be expected to vary significantly with $\mu\in\cD$. 
The adaptive setting now also allows us to estimate the error with respect to the exact solution in $\cX$, \ie,
\begin{equation}\label{eq:exacterr}
	e_{N}^\eps(\mu) := u(\mu) - u_{N}^\eps(\mu),
\end{equation}
and not (only) the error w.r.t.\ a fixed and a priori given truth discretization. In fact, using standard arguments yields {a residual-based estimate analogous to \eqref{eq:trutherrorbound}}
\begin{equation}\label{Eq:EquivEps}
	\beta(\mu)\, \norm{e^{\eps}_{N}(\mu)}_{\cX} 
		\leq R_{b,N}^{\eps}(\mu) 
		\leq \gamma(\mu) \norm{e^{\eps}_{N}(\mu)}_{\cX},
\end{equation}
where $R_{b,N}^{\eps}(\mu):= \| r_{b}(u^\eps_N(\mu); \mu)\|_{\cY'}$ defined by \eqref{Def:Res}. This means that 
\begin{equation}\label{Eq:Delta-eps-N}
	\Delta_N^\eps(\mu) := \frac{R_{b,N}^{\eps}(\mu)}{\beta(\mu)\footnotemark}
\end{equation}
\footnotetext{{Note, that here in fact, we have $\beta(\mu)$ and not some reduced system analogue $\beta_N(\mu)$.}}%
is a surrogate for the true error $\| e_N^\eps(\mu)\|_\cX$. Note, however, that the computation of the residual  and its dual norm $R_{b,N}^{\eps}(\mu)$ requires the solution of an infinite-dimensional problem on $\cY$. 


\begin{remark}
	The above formulated adaptive framework can also be interpreted as using different finite element meshes for different  $\mu\in\cD$ in the snapshot generation.
\end{remark}

\subsection{Approximate Error Estimates}
\label{sec:equiverrorest}
In order to obtain a computationally feasible numerical method, we need a computable error estimator, recalling that in the infinite-dimensional setting neither the error $e^\eps_N(\mu)$ nor the (dual norm of the) residual $R_{b,N}^\eps(\mu)$ (and hence also the error estimator $\Delta_N^\eps(\mu)$ in \eqref{Eq:Delta-eps-N}) are computable. 

We shall assume that a \emph{surrogate} $\overline\Delta_{N}^\eps(\mu)$ is available (and computable) such that 
\begin{equation}\label{Eq:dRes}
	c_\Delta \, \Delta_N^\eps(\mu) 
	\leq 
	\overline\Delta_{N}^\eps(\mu) 
	\leq 
	C_\Delta\, \Delta_N^\eps(\mu).
\end{equation}
Of course, we have to expect that the complexity for the computation of $\overline\Delta_{N}^\eps(\mu)$ will grow as $c_\Delta, C_\Delta\to 1$. By \eqref{Eq:EquivEps}, this readily implies
\begin{equation}\label{Eq:dErr}
	\| e^\eps_N(\mu)\|_\cX 
	\le \frac1{c_\Delta} \overline\Delta_{N}^\eps(\mu) 
	\le \frac{C_\Delta}{c_\Delta} \frac{\gamma(\mu)}{\beta(\mu)}\, \| e^\eps_N(\mu)\|_\cX.
\end{equation}

\subsection{Adaptive Greedy Algorithm}
Now we have all ingredients at hand to formulate a fully adaptive version of the Greedy algorithm in Algorithm \ref{alg:EpsDeltaGreedy}. The adaptive computations take place in line \ref{eps:1} concerning the error estimator and in line \ref{eps:2} for the snapshot. 

\begin{algo}
	\caption{[$X_{N}^{\eps}$] = \textbf{AdaptGreedy}[$\widetilde{\text{tol}}$, $N_{\max}$, $\eps$, $\cD_{\text{train}}$]}
		\label{alg:EpsDeltaGreedy}
	\begin{algorithmic}[1]
		\STATE $S_0:=\emptyset$
		\FOR{$N=1,\dots,N_{\max}$}
			\STATE Choose $\mu^{N} := \argmax_{\mu \in \cD_{\text{train}}}\overline\Delta_{N-1}^{\eps}(\mu)$.\label{eps:1}
			\STOPCRIT{$\overline\Delta_{N-1}^{\eps}(\mu^{N}) < c_\Delta\, \widetilde{\text{tol}}$}\label{Alg:stop}
			\STATE $S_N:=S_{N-1}\cup \{ \mu^N\}$.
			\STATE Compute snapshot $u^{\eps}(\mu^{N})$ with \eqref{Eq:approximate}.\label{eps:2}
			\STATE Update reduced basis: $X_{N}^{\eps} = X_{N-1}^{\eps}{\oplus\Span}\{ u^{\eps}(\mu^{N})\}$.
			\STATE $N \gets N+1$.
		\ENDFOR
	\end{algorithmic}
\end{algo}


It seems natural that the tolerances $\eps(\mu)>0$ bound the reduction error $e^\eps_N(\mu) = u(\mu) - u^\eps_N(\mu)$ from below in the sense that the error cannot be smaller than the accuracy of the snapshot approximations. A result from \cite{GreedyConv} shows that this can lead to a stalling of the Greedy training at a certain level. As usual, the benchmark for the Greedy algorithm is the \emph{Kolmogorov $n$-width} for some $\Sigma\subset\cX$, i.e., 
$$
	d_n(\Sigma)  := \inf_{\dim(\cX_n)=n} \sup_{f\in\Sigma} \min_{g\in \cX_n} \| f-g\|_\cX.
$$

\begin{theorem}[\cite{GreedyConv}]\label{theo:GreedyConv}
Let $\cM(\mu):=\{ u(\mu):\, \mu\in\cD\}$ be compact and suppose that $d_{0}(\cM(\mu)) \leq M$, $d_{n}(\cM(\mu)) \leq M n^{-\theta}$ for some $M$, $\theta > 0$. Then, the approximation 
 \emph{$X^\eps_{N} = \mbox{\textbf{{AdaptGreedy}}[$\widetilde{\text{tol}}$, $N_{\max}$, $\eps$, $\cD$]}$} satisfies
\[
		\sup_{\sigma \in \cM(\mu)} \min_{g\in X^\eps_{N}} \| \sigma - g\|_\cX 
		\leq C(\theta, \varrho) \max\{M n^{-\theta}, \eps_{\text{UB}} \},\qquad
		\varrho := \frac{c_\Delta}{C_\Delta} \frac{\beta_{\text{LB}}}{\gamma_{\text{UB}}}
	\]
	with $\eps_{\text{UB}}:= \sup_{\mu\in\cD} \eps(\mu)$, $\beta_{\text{LB}}$ from \eqref{Necas} and $\gamma_{\text{UB}}$ from \eqref{Eq:bBound}.
	\hfill\qed
\end{theorem}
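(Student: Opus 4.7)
Since the statement is attributed to \cite{GreedyConv}, the natural plan is to reduce Algorithm \ref{alg:EpsDeltaGreedy} to the abstract weak--greedy convergence framework treated there, by identifying the correct weak--greedy constant $\varrho$ and handling the stalling floor $\eps_{\text{UB}}$ introduced by inexact snapshots. The abstract result in \cite{GreedyConv} would be invoked as a black box, so the work consists of verifying its hypotheses in the present adaptive Petrov--Galerkin setting.

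First I would chain the equivalences \eqref{Eq:EquivEps} and \eqref{Eq:dRes}. From \eqref{Eq:EquivEps} one has $\| e_N^\eps(\mu)\|_\cX \le \Delta_N^\eps(\mu) \le (\gamma_{\text{UB}}/\beta_{\text{LB}})\, \| e_N^\eps(\mu)\|_\cX$, and inserting \eqref{Eq:dRes} yields
\begin{equation*}
c_\Delta\, \|e_N^\eps(\mu)\|_\cX \;\le\; \overline\Delta_N^\eps(\mu) \;\le\; \frac{C_\Delta\, \gamma_{\text{UB}}}{\beta_{\text{LB}}}\, \|e_N^\eps(\mu)\|_\cX.
\end{equation*}
Since line \ref{eps:1} of Algorithm \ref{alg:EpsDeltaGreedy} picks $\mu^{N+1}$ as the maximizer of $\overline\Delta_N^\eps$ over $\cD$, combining the upper bound at $\mu^{N+1}$ with the lower bound at an arbitrary $\mu$ gives
\begin{equation*}
\|e_N^\eps(\mu^{N+1})\|_\cX \;\ge\; \frac{c_\Delta}{C_\Delta}\,\frac{\beta_{\text{LB}}}{\gamma_{\text{UB}}}\, \|e_N^\eps(\mu)\|_\cX \;=\; \varrho\, \|e_N^\eps(\mu)\|_\cX,
\end{equation*}
i.e.\ precisely the $\varrho$--weak greedy property with respect to the reduced Galerkin error.

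Next I would pass to the true best--approximation error $\sigma_N(\mu) := \min_{g\in X_N^\eps}\|u(\mu)-g\|_\cX$, which is the quantity appearing in the statement. One direction is immediate, $\sigma_N(\mu) \le \|e_N^\eps(\mu)\|_\cX$, since $u_N^\eps(\mu)\in X_N^\eps$; the reverse inequality follows from a Petrov--Galerkin C\'ea estimate using the reduced inf--sup bound $\beta_N(\mu) \ge \beta_{\text{LB}}$ and continuity $\gamma_{\text{UB}}$, with the resulting multiplicative constant absorbed into $C(\theta,\varrho)$. Moreover, each chosen snapshot $\zeta_i^\eps$ satisfies $\|u(\mu^i) - \zeta_i^\eps\|_\cX \le \eps(\mu^i) \le \eps_{\text{UB}}$ by \eqref{Eq:approximate}, so that $\sup_{\mu\in\cD}\sigma_N(\mu)$ cannot be driven below $O(\eps_{\text{UB}})$. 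With these ingredients in place, the hypotheses of the abstract theorem in \cite{GreedyConv} (in the variant that incorporates a snapshot tolerance) are satisfied for the compact set $\cM(\mu)$ with Kolmogorov width $d_n(\cM(\mu)) \le M n^{-\theta}$, and the theorem directly produces the claimed bound $\sup_{\mu\in\cD}\sigma_N(\mu) \le C(\theta,\varrho)\max\{Mn^{-\theta},\eps_{\text{UB}}\}$.

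The main obstacle is the bookkeeping that delivers the stated $\varrho$ without extra factors: both \eqref{Eq:EquivEps} and the C\'ea step produce a $\gamma/\beta$ ratio, and one must verify that these are not double--counted, so that the final weak--greedy constant is exactly $(c_\Delta/C_\Delta)(\beta_{\text{LB}}/\gamma_{\text{UB}})$ up to a multiplicative factor that is absorbed into $C(\theta,\varrho)$. A secondary subtlety is that the test space $Y_N^\eps(\mu)$ is parameter--dependent, so the C\'ea estimate must be applied uniformly in $\mu\in\cD$; this is legitimate precisely because the reduced inf--sup constant is assumed bounded below by $\beta_{\text{LB}}$ uniformly on $\cD$ in the setup of Section \ref{sec:adaptiverbm}.
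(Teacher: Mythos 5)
The paper does not actually prove this statement: it is imported verbatim from \cite{GreedyConv} with the \textit{qed} placed directly after the statement, so there is no in-paper argument to compare against. Your proposal is a reasonable reconstruction of the reduction one would have to carry out to invoke that abstract result: the chain $c_\Delta\,\|e_N^\eps(\mu)\|_\cX \le \overline\Delta_N^\eps(\mu) \le C_\Delta(\gamma_{\text{UB}}/\beta_{\text{LB}})\,\|e_N^\eps(\mu)\|_\cX$ obtained from \eqref{Eq:EquivEps} and \eqref{Eq:dRes} is correct and does yield the weak-greedy property with parameter $\varrho$ \emph{relative to the Galerkin errors}, and your identification of $\eps_{\text{UB}}$ as the perturbation floor coming from \eqref{Eq:approximate} matches the role it plays in the perturbed-greedy analysis of \cite{GreedyConv}.

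The one point where your sketch is not fully closed is exactly the one you flag: the abstract theorem is formulated in terms of best-approximation distances $\sigma_N(\mu)=\min_{g\in X_N^\eps}\|u(\mu)-g\|_\cX$, and converting your Galerkin-error weak-greedy inequality into one for $\sigma_N$ via the C\'ea estimate genuinely inserts a \emph{second} factor of $\beta_{\text{LB}}/\gamma_{\text{UB}}$: one gets $\sigma_N(\mu^{N+1}) \ge (\beta_{\text{LB}}/\gamma_{\text{UB}})\,\varrho\,\sigma_N(\mu)$, so the effective weak-greedy parameter passed to \cite{GreedyConv} is $(c_\Delta/C_\Delta)(\beta_{\text{LB}}/\gamma_{\text{UB}})^2$ rather than $\varrho$. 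This cannot simply be ``absorbed into a multiplicative factor,'' since it changes the argument of $C(\theta,\cdot)$; it does not affect the convergence rate $n^{-\theta}$ or the $\max\{Mn^{-\theta},\eps_{\text{UB}}\}$ structure, only which constant multiplies it. Since the paper states the theorem as a citation without proof, this discrepancy is arguably an imprecision in the quoted statement as much as a gap in your argument, but if you were to write the proof out in full you would need to either accept the squared ratio or argue directly in terms of the surrogate versus $\sigma_N$ to justify the stated $\varrho$.
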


This result tells us that the RB Greedy training converges quasi-optimally compared to the Kolmogorov $n$-width until an $\eps$-dependent error level is reached. However, a closer look also shows why this result is not completely satisfying in the framework considered here:
\begin{compactitem}
	\item We face problems, where the dependency of all parameter-dependent quantities from the parameter is potentially strong (otherwise adaptivity is not justified). Thus, $\beta(\mu)$ and $\gamma(\mu)$ will strongly vary w.r.t.\ the choice of $\mu$. This, however, will cause the problem that the constant $\varrho$ is overly pessimistic.
	\item The appearance of $ \eps_{\text{UB}}$ seems to indicate that it does not pay off to compute snapshots with different accuracies, since at the end the poorest accuracy determines the overall quality. Again, for strong parameter influences, this is not appropriate, as we have also seen in various numerical experiments, e.g., \cite{Steih:Diss}.
\end{compactitem}
We conclude that a more refined error analysis is required.

\subsection{(Non-)Reproduction of Snapshots}\label{sec:Repro}
As we have pointed out in Remark \ref{Rem:Repro}, on a fixed truth discretization we have that $\Delta_{N}^{\cN}(\mu) = 0$ (up to numerical influences) for all $\mu \in S_{N}$, \ie, the error bound \emph{vanishes} on the set of snapshot parameters, as all snapshots $u^{\cN}(\mu) \in \cX^{\cN}$, $\mu\in S_N$, can be reconstructed \emph{exactly} from the basis functions and the Riesz representation for the error estimator is only based upon $\cX^\cN$, $\cY^\cN$.  
As we will explain now this is \emph{not} the case in the adaptive framework. The reason is that the approximate snapshot $u^\eps(\mu)$ is in $\cX_\mu^\eps$ but for the RB-approximation for the same parameter $\mu\in S_N$, we have that $u^\eps_N(\mu)\not\in\cX_\mu^\eps$. In fact, we only have 
\begin{equation}\label{Eq:CommonX}
	u_N^\eps(\mu) \in {\bigoplus_{\tilde\mu\in S_N}} \cX_{\tilde\mu}^\eps =: \cX^{\eps, S_N}.
\end{equation}
Hence, the argument using Petrov-Galerkin orthogonality as in Remark \ref{Rem:Repro} fails.
In fact, note that $e_N^\eps(\mu) = u(\mu)-u_N^\eps(\mu)$ is the error w.r.t.\ the unknown solution $u(\mu)$, whereas $e_N^\cN(\mu) = u^\cN(\mu)-u_N^\cN(\mu)$ involves the `truth' solution, which is in principle computable (up to numerical precision). This is important since in the `classical' case $u^\cN(\mu)$ is used  as a snapshot, whereas in the adaptive setting $u(\mu)$ cannot be computed and has to be replaced by an approximation $u^\eps(\mu)$. 
Hence, $b(u^\eps(\mu)-u_N^\eps(\mu), v_N^\eps; \mu)$ will in general \emph{not} vanish! This means that -- as opposed to the `classical' RBM -- snapshots are \emph{not reproduced} in the adaptive setting. Reproduction of RB basis functions is \emph{not} a consequence of the fact that the RB spaces are spanned by snapshots as RB basis functions, but a consequence of the Petrov-Galerkin orthogonality. 

Of course, one could use $\cX^{\eps, S_N}$ defined in \eqref{Eq:CommonX} as a joint common truth space as done, e.g., in \cite{Masa:New}. However, if the discretizations for various $\mu$ are significantly different, this would be by far too costly, in particular because already computed snapshots would have to be updated to the new truth space in each iteration. 
Hence, we face a reproduction error, which will be investigated below in more detail depending on the choice of the error measure $\E(\cdot,\cdot)$.

\subsection{Greedy convergence}
In \cite{Steih:Diss}, it was observed that snapshots might be multiply selected within the Greedy process. By suitably choosing the error measure $\E(\cdot,\cdot)$ and the snapshot tolerance $\eps(\mu)$ in \eqref{Eq:approximate}, we are now able to prove that the Greedy scheme with adaptive snapshot computation and an appropriate surrogate for the residual-based error estimator in Algorithm \ref{alg:EpsDeltaGreedy} in fact converges.

\begin{proposition}\label{prop:multiplesnapshots}
	Let  $\widetilde{\text{tol}}>0$ be a given Greedy tolerance. Moreover, we assume the following relation 
	\begin{equation}\label{eq:surrogate}
		\overline\Delta^\eps_n(\mu^i) \le C(\mu^i)\,\, \E(u(\mu^i), \zeta^\eps_i),
		\qquad
		\mu^i\in S_N,
		\quad \zeta^\eps_i:=u^\eps(\mu^i)
	\end{equation}
	for some $C(\mu^i)> 0$. Then, by setting in \eqref{Eq:approximate}
	\begin{equation}\label{Eq:choice-eps}
		\eps(\mu) := \widetilde{\text{tol}}\, \frac{c_\Delta}{C(\mu)}, 
	\end{equation}
	we have: if Algorithm \ref{alg:EpsDeltaGreedy} terminates for some $N<N_{\max}$, we get  
	\begin{equation}\label{Eq:final-error}
		\max_{\mu\in \cD_{\text{train}}} \| e^\eps_N(\mu)\|_\cX  < \widetilde{\text{tol}}.
	\end{equation}
	In particular, if multiple selection of snapshots occurs, i.e., if $\mu^{n+1}\in S_n$, then $\overline\Delta_{n}^{\eps}(\mu^{n+1}) < c_\Delta\, \widetilde{\text{tol}}$ and Algorithm \ref{alg:EpsDeltaGreedy} terminates in line \ref{Alg:stop} ensuring \eqref{Eq:final-error}.
\end{proposition}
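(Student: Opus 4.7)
The plan is to prove both parts by directly combining the algorithm's stopping criterion, the error–estimator equivalence \eqref{Eq:dErr}, and the surrogate hypothesis \eqref{eq:surrogate}; no subtle argument is needed once the right accuracy $\eps(\mu)$ is chosen.

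For the first claim, suppose Algorithm \ref{alg:EpsDeltaGreedy} terminates in line \ref{Alg:stop} returning $X_N^\eps$ for some $N<N_{\max}$. The stopping criterion means $\overline\Delta_N^\eps(\mu^{N+1}) < c_\Delta \widetilde{\text{tol}}$, and since $\mu^{N+1}$ is chosen as $\argmax_{\mu\in\cD_{\text{train}}}\overline\Delta_N^\eps(\mu)$, this bound must hold uniformly: $\overline\Delta_N^\eps(\mu) < c_\Delta \widetilde{\text{tol}}$ for all $\mu\in\cD_{\text{train}}$. Combining with the lower estimate of \eqref{Eq:dErr}, namely $\|e_N^\eps(\mu)\|_\cX \le c_\Delta^{-1}\,\overline\Delta_N^\eps(\mu)$, gives \eqref{Eq:final-error}.

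For the second claim, I would show that a repeated selection immediately triggers the stopping criterion. Assume $\mu^{n+1} \in S_n$, so $\mu^{n+1} = \mu^i$ for some $i\in\{1,\dots,n\}$ with associated snapshot $\zeta_i^\eps = u^\eps(\mu^i)$. Applying the surrogate hypothesis \eqref{eq:surrogate}, then the snapshot-accuracy bound \eqref{Eq:approximate}, and finally the choice \eqref{Eq:choice-eps} of $\eps(\mu^i)$, one obtains
\begin{equation*}
  \overline\Delta_n^\eps(\mu^{n+1}) \;=\; \overline\Delta_n^\eps(\mu^i)
  \;\le\; C(\mu^i)\,\E(u(\mu^i),\zeta_i^\eps)
  \;\le\; C(\mu^i)\,\eps(\mu^i)
  \;=\; c_\Delta\,\widetilde{\text{tol}}.
\end{equation*}
Hence line \ref{Alg:stop} fires, the algorithm terminates, and \eqref{Eq:final-error} follows from the first part.

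The only technical wrinkle is the strict-versus-weak inequality: the cascade above produces $\le$, while the stopping test is written with $<$. I would address this by either (i) assuming the snapshot routine returns with a strict bound $\E(u(\mu),u^\eps(\mu)) < \eps(\mu)$, as is natural for iterative adaptive solvers which decrease the tolerance geometrically, or (ii) interpreting the criterion in line \ref{Alg:stop} as $\le$; either way the chain of inequalities becomes strict and the conclusion holds. The proof itself is essentially a bookkeeping argument, so the conceptual content sits entirely in the calibration \eqref{Eq:choice-eps} of $\eps(\mu)$ against the surrogate constant $C(\mu)$ appearing in \eqref{eq:surrogate}; without this balancing, the snapshot accuracy would not propagate cleanly through the surrogate into the stopping test.
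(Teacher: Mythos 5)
Your proof is correct and follows essentially the same route as the paper: the same chain $\overline\Delta_n^\eps(\mu^{n+1}) = \overline\Delta_n^\eps(\mu^i) \le C(\mu^i)\,\E(u(\mu^i),\zeta_i^\eps) \le C(\mu^i)\,\eps(\mu^i) = c_\Delta\,\widetilde{\text{tol}}$ combined with \eqref{Eq:dErr}, the only cosmetic difference being that the paper applies \eqref{Eq:dErr} first and bounds the error directly, while you bound the surrogate and then invoke the stopping test. Your explicit handling of the strict-versus-weak inequality is in fact more careful than the paper, whose own chain also terminates in ``$\le \widetilde{\text{tol}}$'' while the statement asserts strict inequalities.
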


\begin{remark}
	Obviously, \eqref{eq:surrogate} means that the error measure $\E(\cdot,\cdot)$ must be a rigorous upper bound for the surrogate of the residual-based error estimator $\overline\Delta^\eps_n$ -- at least for the snapshot samples. Hence, \eqref{eq:surrogate} relates the RB-error for $\mu^i\in S_N$ with the snapshot accuracy. Since adaptivity is particularly useful for strongly parameter-dependent problems, we will investigate how to choose $\E(\cdot,\cdot)$ in order to make $C(\mu)$ potentially small, in particular as weakly parameter-sensitive as possible.
\end{remark}

\begin{proof}
	If $\mu^{n+1}\in S_n$, then by line \ref{eps:1} in Algorithm \ref{alg:EpsDeltaGreedy}, we have that $\overline\Delta_{n}^\eps(\mu) \le \overline\Delta_{n}^\eps(\mu^{n+1})$ for all $\mu\in\cD_{\text{train}}$ and that there exists some $1\le i\le n$ such that $\mu^{n+1}=\mu^i$. Then, by \eqref{Eq:dErr}
	\begin{align*}
		\max_{\mu\in \cD_{\text{train}}} \| e^\eps_n(\mu)\|_\cX
		&\le \frac1{c_\Delta} \max_{\mu\in \cD_{\text{train}}} \overline\Delta^\eps_{n}(\mu)
		= \frac1{c_\Delta}\,\overline\Delta_{n}^{\eps}(\mu^{n+1}) \\
		&= \frac1{c_\Delta}\,\overline\Delta_{n}^{\eps}(\mu^{i}) 
		\le \frac{C(\mu^i)}{c_\Delta}\, \E (u(\mu^i), u^\eps(\mu^i)) 
		\,\le\, \frac{C(\mu^i)}{c_\Delta}  \eps(\mu^i)
		\le \widetilde{\text{tol}},
	\end{align*}
	where we used \eqref{Eq:approximate} and the choice of $\eps(\mu)$ in \eqref{Eq:choice-eps}.
\end{proof}

\subsection{Choice of error measure}\label{Sec:ErrorMeasure}
We will now investigate various choices of $\E(\cdot,\cdot)$.

\subsubsection{The True Error}
We start by considering the case $\E(x, \tilde{x}) := \| x-\tilde{x}\|_\cX$. In this case, we can investigate the reproduction error further.

\begin{proposition}\label{prop:snapshotreproductionerror}
Let $b(\cdot, \cdot; \mu): \cX\times\cY\to\R$ be inf-sup stable on $X_{N}^{\eps}\times Y_{N}^{\eps}$ with inf-sup constant $\beta_{N}(\mu)$. 
Moreover, consider the case $\E(x,\tilde x):=\| x-\tilde x\|_\cX$ for the error measure in \eqref{Eq:approximate}. 
Then for all $\mu^{i}\in S_{N}$, we have
\begin{subequations}\label{eq:snapshotreprerr}
	\begin{align}
	\E(u(\mu^i), u^\eps_N(\mu^i)) = 
	\norm{u(\mu^{i}) - u_{N}^{\eps}(\mu^{i})}_{\cX}
	&\leq \frac{\gamma(\mu^{i})}{\beta(\mu^{i})}\,\eps(\mu^{i}),
		\label{eq:snapshotreprerr-a}
	\\ 
	\E(\zeta^\eps_i, u^\eps_N(\mu^i)) = 
	\norm{
		\zeta^\eps_i - u_{N}^{\eps}(\mu^{i})}_{\cX}
	& \leq \frac{\gamma(\mu^{i})}{\beta_N(\mu^{i})}\,\eps(\mu^{i}),
		\qquad \zeta_i^\eps = u^\eps(\mu^i), 
	\label{eq:snapshotreprerr-b}
	\end{align}
\end{subequations}
where $\eps(\mu^{i})$ denotes the accuracy of snapshot $u^{\eps}(\mu^{i})$ in \eqref{Eq:approximate} and $\beta_N(\mu)$ denotes the inf-sup-constant of $b(\cdot,\cdot;\mu)$ on the reduced spaces $X_{N}^{\eps}\subset \cX$, $Y_{N}^{\eps}\subset\cY$.
\end{proposition}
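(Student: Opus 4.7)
The proof rests on two structural facts that survive the loss of a common truth space: (i) by construction \eqref{eq:AdaptiveRBSpace}, the approximate snapshot $\zeta_i^\eps = u^\eps(\mu^i)$ itself lies in the reduced trial space $X_N^\eps$, so it is always available as a candidate in any best-approximation statement over $X_N^\eps$; and (ii) both the exact PPDE solution $u(\mu^i)$ and the reduced solution $u_N^\eps(\mu^i)$ satisfy $b(\cdot, w_N; \mu^i) = f(w_N; \mu^i)$ for every $w_N \in Y_N^\eps(\mu^i)$, so the \emph{true} error $u(\mu^i) - u_N^\eps(\mu^i)$ is Petrov--Galerkin orthogonal to the reduced test space (unlike the snapshot-reproduction error discussed in \S\ref{sec:Repro}). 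These two observations play the role that the reproducing identity of Remark \ref{Rem:Repro} plays classically, modulated now by the snapshot accuracy $\eps(\mu^i)$.

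\textbf{Estimate \eqref{eq:snapshotreprerr-b}.} I would tackle (b) first. Since $\zeta_i^\eps - u_N^\eps(\mu^i) \in X_N^\eps$, the assumed reduced inf-sup condition with constant $\beta_N(\mu^i)$ gives
\[
\beta_N(\mu^i)\,\|\zeta_i^\eps - u_N^\eps(\mu^i)\|_\cX \le \sup_{w_N \in Y_N^\eps(\mu^i)} \frac{b(\zeta_i^\eps - u_N^\eps(\mu^i), w_N; \mu^i)}{\|w_N\|_\cY}.
\]
Using observation (ii) to rewrite the numerator as $b(\zeta_i^\eps - u(\mu^i), w_N; \mu^i)$ and then the continuity bound \eqref{Eq:bBound} produces $\gamma(\mu^i)\,\|u(\mu^i) - \zeta_i^\eps\|_\cX\,\|w_N\|_\cY$. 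Since the choice $\E(x,\tilde x) = \|x - \tilde x\|_\cX$ together with \eqref{Eq:approximate} forces $\|u(\mu^i) - \zeta_i^\eps\|_\cX \le \eps(\mu^i)$, dividing by $\beta_N(\mu^i)\|w_N\|_\cY$ and taking the supremum delivers \eqref{eq:snapshotreprerr-b}.

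\textbf{Estimate \eqref{eq:snapshotreprerr-a}.} For (a), I would route through the residual-based equivalence \eqref{Eq:EquivEps} on the full spaces, i.e.\
\[
\|u(\mu^i) - u_N^\eps(\mu^i)\|_\cX \le \frac{R_{b,N}^\eps(\mu^i)}{\beta(\mu^i)},
\]
and bound the residual by splitting $r_b(u_N^\eps(\mu^i); \mu^i) = \cB(\mu^i)(u(\mu^i) - \zeta_i^\eps) + \cB(\mu^i)(\zeta_i^\eps - u_N^\eps(\mu^i))$: continuity \eqref{Eq:bBound} controls the first summand by $\gamma(\mu^i)\,\eps(\mu^i)$ via \eqref{Eq:approximate}, while the second is controlled by the just-proved \eqref{eq:snapshotreprerr-b}. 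The main obstacle I anticipate is the constants bookkeeping: the most direct route via a Cea/Xu--Zikatanov quasi-optimality argument on the reduced pair $X_N^\eps \times Y_N^\eps(\mu^i)$ naturally produces the reduced constant $\gamma(\mu^i)/\beta_N(\mu^i)$ rather than the full-space constant $\gamma(\mu^i)/\beta(\mu^i)$ appearing in the stated bound, and it is precisely the use of \eqref{Eq:EquivEps}---which invokes the full-space inf-sup $\beta(\mu^i)$ guaranteed uniformly by the Ne\v{c}as condition \eqref{Necas}---that allows (a) to be phrased with $\beta(\mu^i)$ in the denominator even though the intermediate bound passes through the reduced-space estimate of (b).
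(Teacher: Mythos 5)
Your proof of \eqref{eq:snapshotreprerr-b} is correct and is exactly the paper's argument: test the reduced inf-sup condition with the element $\zeta_i^\eps - u_N^\eps(\mu^i)\in X_N^\eps$, insert $\pm u(\mu^i)$, kill the term $b(u(\mu^i)-u_N^\eps(\mu^i),\cdot\,;\mu^i)$ by Petrov--Galerkin orthogonality, and finish with continuity and \eqref{Eq:approximate}.

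Your route to \eqref{eq:snapshotreprerr-a}, however, does not deliver the stated constant, and this is a genuine gap. Splitting the residual as $r_b(u_N^\eps(\mu^i);\mu^i)=\cB(\mu^i)(u(\mu^i)-\zeta_i^\eps)+\cB(\mu^i)(\zeta_i^\eps-u_N^\eps(\mu^i))$ and feeding the second summand through \eqref{eq:snapshotreprerr-b} gives, after the triangle inequality and continuity,
\begin{equation*}
\norm{u(\mu^i)-u_N^\eps(\mu^i)}_\cX
\le \frac{1}{\beta(\mu^i)}\Bigl(\gamma(\mu^i)\,\eps(\mu^i)+\gamma(\mu^i)\,\frac{\gamma(\mu^i)}{\beta_N(\mu^i)}\,\eps(\mu^i)\Bigr)
=\frac{\gamma(\mu^i)}{\beta(\mu^i)}\Bigl(1+\frac{\gamma(\mu^i)}{\beta_N(\mu^i)}\Bigr)\eps(\mu^i),
\end{equation*}
which is strictly weaker than the claimed bound $\frac{\gamma(\mu^i)}{\beta(\mu^i)}\eps(\mu^i)$; the detour through \eqref{Eq:EquivEps} does not ``absorb'' the extra factor, it adds one. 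The paper instead argues directly: Petrov--Galerkin orthogonality of the \emph{true} error (your observation (ii)) yields the quasi-best-approximation property $\norm{u(\mu^i)-u_N^\eps(\mu^i)}_\cX\le \frac{\gamma(\mu^i)}{\beta(\mu^i)}\inf_{w_N\in X_N^\eps}\norm{u(\mu^i)-w_N}_\cX$ (Xu--Zikatanov, without the classical ``$1+$''), and then your observation (i) -- that $\zeta_i^\eps\in X_N^\eps$ -- bounds the infimum by $\norm{u(\mu^i)-\zeta_i^\eps}_\cX\le\eps(\mu^i)$. No residual splitting is needed, and \eqref{eq:snapshotreprerr-b} is not an ingredient of \eqref{eq:snapshotreprerr-a}. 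Your worry about $\beta(\mu^i)$ versus $\beta_N(\mu^i)$ in the C\'ea constant is legitimate (the quasi-optimality constant for a Petrov--Galerkin scheme is governed by the discrete inf-sup constant, and the paper writes $\beta(\mu^i)$, which is justified when $\beta_N(\mu)\ge\beta(\mu)$ as for the supremizer test space of Proposition \ref{Prop:LBB}), but the correct response is to address that constant in the quasi-optimality step, not to reroute through the residual.
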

\begin{proof}
Let $\mu^{i}\in S_{N}$. As $X_{N}^{\eps}\subset \cX$, $Y_{N}^{\eps}\subset\cY$, we have Petrov-Galerkin orthogonality w.r.t. the exact solution, \ie,
$b(u(\mu) - u_{N}^{\eps}(\mu), v_{N};\mu) = 0$ for all $v_{N}\in Y_{N}^{\eps}$. This implies the quasi-best approximation property 
$\norm{u(\mu) - u_{N}^{\eps}(\mu)}_{\cX} \leq \frac{\gamma(\mu)}{\beta(\mu)}\inf_{w_{N}\in X_{N}^{\eps}}\norm{u(\mu) - w_{N}}_{\cX}$, {\cite[Thm.\ 2]{MR1971217}}.%
{\footnote{{The original result due to Babu\v{s}ka and Aziz (1972) contains the factor $1+ {\gamma(\mu)}{(\beta(\mu))^{-1}}$. It was shown in \cite[Thm.\ 2]{MR1971217} that the `1+' can be removed.}}} %
As snapshots $\zeta_i^\eps = u^{\eps}(\mu^{i})$, $\mu^{i}\in S_{N}$, are in $X_{N}^{\eps}$, the first inequality \eqref{eq:snapshotreprerr-a} follows with $\inf_{w_{N}\in X_{N}^{\eps}}\norm{u(\mu) - w_{N}}_{\cX}\leq \norm{u(\mu) - \zeta_i^\eps}_{\cX} = \E(u(\mu), u^{\eps}(\mu^{i})) \leq \eps(\mu^{i})$ from \eqref{Eq:approximate}. 

Moreover, we have by choosing $u_{N} = \zeta_i^\eps - u_{N}^{\eps}(\mu^{i}) \in X_{N}^{\eps}$ and using Petrov-Galerkin orthogonality
\begin{align*}
\beta_{N}(\mu^{i}) &\leq \inf_{u_{N}\in X_{N}^{\eps}}\sup_{v_{N}\in Y_{N}^{\eps}}\frac{b(u_{N},v_{N};\mu^{i})}{\norm{u_{N}}_{\cX}\norm{v_{N}}_{\cY}} 
\leq \sup_{v_{N}\in Y_{N}^{\eps}}\frac{b(\zeta_i^\eps - u_{N}^{\eps}(\mu^{i}),v_{N};\mu^{i})}{\norm{\zeta_i^\eps - u_{N}^{\eps}(\mu^{i})}_{\cX}\norm{v_{N}}_{\cY}}\\
&{= \sup_{v_{N}\in Y_{N}^{\eps}}\frac{b(\zeta_i^\eps -  u(\mu^{i}) +  u(\mu^{i}) - u_{N}^{\eps}(\mu^{i}) ,v_{N};\mu^{i})}{\norm{\zeta_i^\eps - u_{N}^{\eps}(\mu^{i})}_{\cX}\norm{v_{N}}_{\cY}}} \\
&= \sup_{v_{N}\in Y_{N}^{\eps}}\frac{b(\zeta_i^\eps - u(\mu^{i}),v_{N};\mu^{i})}{\norm{\zeta_i^\eps - u_{N}^{\eps}(\mu^{i})}_{\cX}\norm{v_{N}}_{\cY}} 
\leq \frac{\gamma(\mu^{i})\, \eps(\mu^{i})}{\norm{\zeta_i^\eps - u_{N}^{\eps}(\mu^{i})}_{\cX}},
\end{align*}
{using continuity in the last step.}
\end{proof}

\begin{corollary}
	In the case $\E(x,\tilde x):=\| x-\tilde x\|_\cX$ assumption \eqref{eq:surrogate} holds with 
	\begin{equation}\label{Eq:Norm:C}
		C(\mu) := C_\Delta \frac{\gamma(\mu)^2}{\beta(\mu)^2}.
	\end{equation}
\end{corollary}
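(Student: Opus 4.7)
The plan is to chain three previously established inequalities. Combining \eqref{Eq:dRes} with the definition \eqref{Eq:Delta-eps-N} and the upper bound in \eqref{Eq:EquivEps} yields, for any $\mu \in \cD$,
\[
  \overline\Delta_n^\eps(\mu) \;\le\; C_\Delta\, \Delta_n^\eps(\mu)
  \;=\; C_\Delta\, \frac{R_{b,n}^\eps(\mu)}{\beta(\mu)}
  \;\le\; C_\Delta\, \frac{\gamma(\mu)}{\beta(\mu)}\, \bignorm{u(\mu) - u_n^\eps(\mu)}_\cX.
\]
This already produces one factor of $\gamma(\mu)/\beta(\mu)$; what remains is to trade the RB-error $\norm{u(\mu^i) - u_n^\eps(\mu^i)}_\cX$ for the snapshot-approximation error $\E(u(\mu^i), \zeta_i^\eps) = \norm{u(\mu^i) - \zeta_i^\eps}_\cX$, which will produce the second factor.

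For the second factor I would reuse the argument already given in the proof of Proposition \ref{prop:snapshotreproductionerror}: Petrov--Galerkin orthogonality on $X_N^\eps \subset \cX$, $Y_N^\eps \subset \cY$ together with the Babu\v{s}ka--Aziz/Xu--Zikatanov quasi-best approximation bound gives
\[
  \bignorm{u(\mu^i) - u_n^\eps(\mu^i)}_\cX
  \;\le\; \frac{\gamma(\mu^i)}{\beta(\mu^i)}\, \inf_{w_N \in X_n^\eps}\bignorm{u(\mu^i) - w_N}_\cX.
\]
Since $\mu^i \in S_n$ implies $\zeta_i^\eps = u^\eps(\mu^i) \in X_n^\eps$, choosing $w_N = \zeta_i^\eps$ in the infimum bounds the right-hand side by $\frac{\gamma(\mu^i)}{\beta(\mu^i)}\, \norm{u(\mu^i) - \zeta_i^\eps}_\cX$.

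Concatenating these two steps gives
\[
  \overline\Delta_n^\eps(\mu^i)
  \;\le\; C_\Delta\, \frac{\gamma(\mu^i)^2}{\beta(\mu^i)^2}\, \bignorm{u(\mu^i) - \zeta_i^\eps}_\cX
  \;=\; C_\Delta\, \frac{\gamma(\mu^i)^2}{\beta(\mu^i)^2}\, \E(u(\mu^i), \zeta_i^\eps),
\]
which is exactly \eqref{eq:surrogate} with the stated $C(\mu)$. There is no real obstacle: the corollary is a direct consequence of the estimates already assembled in \eqref{Eq:dRes}, \eqref{Eq:EquivEps}, and the quasi-best approximation step inside the proof of Proposition \ref{prop:snapshotreproductionerror}. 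The only thing to be careful about is to invoke the quasi-best bound with the specific choice $w_N = \zeta_i^\eps$ rather than using the already-derived \eqref{eq:snapshotreprerr-a} (which bounds by $\eps(\mu^i)$), since we need the estimate phrased in terms of $\E(u(\mu^i),\zeta_i^\eps)$ in order to match the form of assumption \eqref{eq:surrogate}.
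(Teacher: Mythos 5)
Your proof is correct and follows essentially the same route as the paper: the paper chains the upper bound of \eqref{Eq:dErr} (which is exactly your combination of \eqref{Eq:dRes}, \eqref{Eq:Delta-eps-N} and \eqref{Eq:EquivEps}) with \eqref{eq:snapshotreprerr-a}, whose derivation in Proposition \ref{prop:snapshotreproductionerror} is precisely your quasi-best-approximation step with $w_N=\zeta_i^\eps$. Your only deviation is a cosmetic improvement: you stop at $\E(u(\mu^i),\zeta_i^\eps)$ so that the bound literally matches \eqref{eq:surrogate}, whereas the paper's displayed chain passes on to the weaker bound $\eps(\mu^i)$.
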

\begin{proof}
	From \eqref{Eq:dErr} and \eqref{eq:snapshotreprerr-a} we get
	\begin{align*}
		\overline\Delta_n^\eps(\mu^i)
		&\le C_\Delta\, \frac{\gamma(\mu^i)}{\beta(\mu^i)}\,
			\| e_n^\eps(\mu^i)\|_\cX
		= C_\Delta\, \frac{\gamma(\mu^i)}{\beta(\mu^i)}\,
			\| u(\mu^i) - u_n^\eps(\mu^i) \|_\cX
			\\
		&\le C_\Delta\, \frac{\gamma(\mu^i)^2}{\beta(\mu^i)^2}\, \eps(\mu^i), 
	\end{align*}
	which proves the claim.
\end{proof}

\paragraph{The elliptic case}\label{Sec:Elliptic}
One might already guess that the factor $\frac{\gamma(\mu)^2}{\beta(\mu)^2}$ in \eqref{Eq:Norm:C} is both overly pessimistic in many cases and computationally demanding since this factor controls the accuracy of the snapshots (in the $\cX$-norm). As we shall see now, this situation can be improved for the elliptic case, i.e., $\cX=\cY$ and $b(\cdot,\cdot;\mu)\equiv a(\cdot,\cdot;\mu)$ being symmetric and coercive with coercivity constant $\alpha(\mu)$. In this case, we may consider the energy norm
$$
	\| w\|_\mu := \sqrt{ a(w,w,;\mu) }, \qquad w\in\cX,\quad \mu\in\cD,
$$
which is equivalent to $\|\cdot\|_\cX$, i.e., $\alpha(\mu)^{1/2} \| w\|_\cX \le \| w\|_\mu \le \gamma(\mu)^{1/2} \| w\|_\cX$. The induced dual norm reads
$$
	\| g\|_{(\mu)'} := \sup_{v\in\cX} \frac{g(v)}{\| v\|_\mu}, \qquad g\in\cX',
$$
so that $\gamma(\mu)^{-1/2} \| g\|_{\cX'} \le \| g\|_{(\mu)'} \le \alpha(\mu)^{-1/2} \| g\|_\cX$. Then, for $\cA(\mu):\cX\to\cX'$ defined as $\langle \cA(\mu)w,v\rangle_{\cX'\times\cX} := a(w,v;\mu)$, $v,w\in\cX$, it is easy to see that $\|\cA(\mu) v\|_{(\mu)'}^2 = a(v,v; \mu)$, so that
\begin{equation}\label{Eq:ell:1}
	\| u(\mu) - u^\eps_N(\mu)\|_\mu \le \| u(\mu) -v_N\|_\mu
	\qquad
	\text{for\ all}\
	v_N\in\cX_N.
\end{equation}
The reason is that the RB solution $u^\eps_N(\mu)$ is the Galerkin projection of $u(\mu)$ onto $X_N$ and a reasoning as in the standard proof of C\'{e}a's lemma shows \eqref{Eq:ell:1}.

\begin{proposition}\label{prop:multiplesnapshots:ell}
	Let $a(\cdot,\cdot;\mu)$ be symmetric and coercive on $\cX$ with coercivity constant $\alpha(\mu)$. In the case $\E(x,\tilde x):=\| x-\tilde x\|_\cX$, assumption \eqref{eq:surrogate} holds with 
	\begin{equation}\label{Eq:choice-eps:ell}
		C(\mu) := C_\Delta \frac{\gamma(\mu)^{3/2}}{\alpha(\mu)^{3/2}}.
	\end{equation}
\end{proposition}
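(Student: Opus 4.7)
The plan is to mimic the proof of the preceding corollary (for the general case) but to exploit the symmetric coercive structure by routing the bound through the energy norm $\|\cdot\|_\mu$. The savings of one factor of $\sqrt{\gamma(\mu)/\alpha(\mu)}$ come from the Galerkin best-approximation property \eqref{Eq:ell:1}, which holds with constant $1$ in the energy norm, rather than with the factor $\gamma(\mu)/\beta(\mu) = \gamma(\mu)/\alpha(\mu)$ that one picks up when using the $\cX$-norm quasi-best approximation from Proposition \ref{prop:snapshotreproductionerror}.

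Concretely, I would proceed in three short steps. First, specialize \eqref{Eq:dErr} to the coercive case (where $\beta(\mu)=\alpha(\mu)$) to obtain
\begin{equation*}
\overline\Delta^\eps_n(\mu^i) \;\le\; C_\Delta\,\frac{\gamma(\mu^i)}{\alpha(\mu^i)}\,\| u(\mu^i)-u^\eps_n(\mu^i)\|_\cX.
\end{equation*}
Second, convert the $\cX$-norm on the right to the energy norm via $\|w\|_\cX \le \alpha(\mu)^{-1/2}\|w\|_\mu$ and then apply the Galerkin best-approximation inequality \eqref{Eq:ell:1} with the admissible choice $v_N = \zeta^\eps_i \in X_N^\eps$; converting back via $\|w\|_\mu \le \gamma(\mu)^{1/2}\|w\|_\cX$ yields
\begin{equation*}
\| u(\mu^i)-u^\eps_n(\mu^i)\|_\cX \;\le\; \alpha(\mu^i)^{-1/2}\,\| u(\mu^i)-\zeta^\eps_i\|_{\mu^i} \;\le\; \frac{\gamma(\mu^i)^{1/2}}{\alpha(\mu^i)^{1/2}}\,\| u(\mu^i)-\zeta^\eps_i\|_\cX.
\end{equation*}

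Third, assemble the two inequalities and recall that $\E(u(\mu^i),\zeta^\eps_i) = \| u(\mu^i)-\zeta^\eps_i\|_\cX$ by the choice of error measure in this section; this produces
\begin{equation*}
\overline\Delta^\eps_n(\mu^i) \;\le\; C_\Delta\,\frac{\gamma(\mu^i)^{3/2}}{\alpha(\mu^i)^{3/2}}\,\E(u(\mu^i),\zeta^\eps_i),
\end{equation*}
which is precisely \eqref{eq:surrogate} with the $C(\mu)$ announced in \eqref{Eq:choice-eps:ell}. There is no substantial obstacle: the argument is a short chain of inequalities, and the only thing to be careful about is to apply the energy-norm best approximation \emph{exactly once} (not twice in either norm), since that is what replaces one factor of $\gamma(\mu)/\alpha(\mu)$ by $(\gamma(\mu)/\alpha(\mu))^{1/2}$ relative to the non-symmetric bound \eqref{Eq:Norm:C}.
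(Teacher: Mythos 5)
Your proof is correct and rests on the same key ingredient as the paper's: a single application of the energy-norm Galerkin optimality \eqref{Eq:ell:1} with the admissible choice $v_N=\zeta_i^\eps$, combined with the norm equivalences between $\|\cdot\|_\cX$ and $\|\cdot\|_{\mu}$, yielding the identical constant $C_\Delta\,(\gamma(\mu)/\alpha(\mu))^{3/2}$. The only (cosmetic) difference is that you track the factors through primal error norms starting from \eqref{Eq:dErr}, whereas the paper routes through residual dual norms via the identity $\|r_a(v;\mu)\|_{(\mu)'}=\|u(\mu)-v\|_\mu$ — a bookkeeping choice that also lets the paper reuse the intermediate step \eqref{Eq:ell:3} to obtain Corollary \ref{prop:multiplesnapshots:ell-res} directly.
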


\begin{proof}
	By \eqref{Eq:dErr} and denoting the \emph{residual} as $r_a(v; \mu):=f(\mu) - \cA(\mu) v\in \cX'$ for any $v\in\cX$, we get
	\begin{align}
		\overline\Delta_n^\eps(\mu^i)
		&\le 
		C_\Delta\, \Delta_{n}^{\eps}(\mu^{i})  
		= C_\Delta\, \frac1{\alpha(\mu^i)} \| r_n^\eps(\mu^i)\|_{\cX'}
		\le C_\Delta\,\frac{\sqrt{\gamma(\mu^i)}}{\alpha(\mu^i)} \| r_n^\eps(\mu^i)\|_{(\mu^i)'} \nonumber \\
		&=  C_\Delta\,\frac{\sqrt{\gamma(\mu^i)}}{\alpha(\mu^i)} \sqrt{ a( u(\mu^i)-u^\eps_n(\mu^i), u(\mu^i)-u^\eps_n(\mu^i); \mu^i)} 
					\nonumber\\
		&=  C_\Delta\,\frac{\sqrt{\gamma(\mu^i)}}{\alpha(\mu^i)} \| u(\mu^i)-u^\eps_n(\mu^i) \|_{\mu^i}
					\nonumber\\
		&\le C_\Delta\,\frac{\sqrt{\gamma(\mu^i)}}{\alpha(\mu^i)} \| u(\mu^i)-u^\eps(\mu^i) \|_{\mu^i}
			\label{Eq:ell:2} 
		=C_\Delta\,\frac{\sqrt{\gamma(\mu^i)}}{\alpha(\mu^i)} \| r_a( u^\eps(\mu^i); \mu^i)\|_{(\mu^i)'} 
			\\
		&\le C_\Delta\, \frac{\sqrt{\gamma(\mu^i)}}{\alpha(\mu^i)^{3/2}} \| r_a( u^\eps(\mu^i); \mu^i)\|_{\cX'} 
			\label{Eq:ell:3} \\
		&\le C_\Delta\, 
			\Big(\frac{\gamma(\mu^i)}{\alpha(\mu^i)}\Big)^{3/2} \, \| u(\mu^i) - u^\eps(\mu^i)\|_\cX 
		= C_\Delta\,
			\Big(\frac{\gamma(\mu^i)}{\alpha(\mu^i)}\Big)^{3/2} \, \E(u(\mu^i), u^\eps(\mu^i)),
						\nonumber
	\end{align}
	where \eqref{Eq:ell:2} follows from \eqref{Eq:ell:1} by choosing $v_N=u^\eps(\mu^i)\in X_n$, i.e., the approximate snapshot.
\end{proof}

\begin{remark}
	Obviously, \eqref{Eq:choice-eps:ell} improves upon \eqref{Eq:Norm:C} by a multiplicative factor of $\sqrt{\frac{\alpha(\mu)}{\gamma(\mu)}}$ (and $\alpha(\mu)$ may also be larger than $\beta(\mu)$). 
\end{remark}

\subsubsection{The Residual}

We can further improve the above estimates if we choose a different error measure $\E(\cdot,\cdot)$ in  \eqref{Eq:approximate} for approximating the snapshots, namely the residual, i.e., in the elliptic case $\cA(\mu): \cX\to\cX'$,
	\begin{align}\label{Eq:approximate:ell-res}
		\E(u(\mu), u^\eps(\mu))
		&:= \| r_a(u^\eps(\mu); \mu)\|_{\cX'} 
		= \| \cA(\mu) (u(\mu)- u^\eps(\mu))\|_{\cX'}. 
		\nonumber
	\end{align}
	Since
	$\| r_a(u^\eps(\mu); \mu)\|_{\cX'} \le \gamma(\mu)\, \| u(\mu) - u^\eps(\mu)\|_\cX$, 
	the snapshot accuracy can immediately be relaxed by another factor of $\gamma(\mu)$, which might be significant in some applications.

\begin{corollary}\label{prop:multiplesnapshots:ell-res}
	Let $a(\cdot,\cdot;\mu)$ be symmetric and coercive on $\cX$. In the case $\E(x,\tilde x):=\| \cA(\mu)(x-\tilde{x})\|_{\cX'}$ assumption \eqref{eq:surrogate} holds with 
	\begin{equation}\label{Eq:choice-eps:ell-res-final}
		C(\mu) := C_\Delta \frac{\gamma(\mu)^{1/2}}{\alpha(\mu)^{3/2}}.
	\end{equation}
\end{corollary}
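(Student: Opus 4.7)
The plan is to essentially re-run the chain of estimates from the proof of Proposition \ref{prop:multiplesnapshots:ell} and to identify at which step the new error measure saves us a factor of $\gamma(\mu)$.

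First I would fix $\mu^i \in S_N$ and observe that since $\cA(\mu^i) u(\mu^i) = f(\mu^i)$, the residual coincides with the action of $\cA(\mu^i)$ on the approximation error, i.e.
\begin{equation*}
  r_a(u^\eps(\mu^i);\mu^i) = f(\mu^i) - \cA(\mu^i) u^\eps(\mu^i) = \cA(\mu^i)\bigl(u(\mu^i) - u^\eps(\mu^i)\bigr),
\end{equation*}
so that with the new error measure,
\begin{equation*}
  \E\bigl(u(\mu^i), u^\eps(\mu^i)\bigr) = \| r_a(u^\eps(\mu^i);\mu^i)\|_{\cX'}.
\end{equation*}

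Next I would reuse the first part of the proof of Proposition \ref{prop:multiplesnapshots:ell} verbatim, namely the estimates from \eqref{Eq:dErr} up through \eqref{Eq:ell:3}, which read
\begin{equation*}
  \overline\Delta_n^\eps(\mu^i)
  \;\le\; C_\Delta\, \frac{\sqrt{\gamma(\mu^i)}}{\alpha(\mu^i)^{3/2}}\, \| r_a(u^\eps(\mu^i); \mu^i)\|_{\cX'}.
\end{equation*}
These steps only used symmetry/coercivity of $a$, the norm equivalences between $\|\cdot\|_\cX$ and $\|\cdot\|_{\mu}$, and the Galerkin optimality \eqref{Eq:ell:1} applied with $v_N = u^\eps(\mu^i) \in X_n$. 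None of them depends on the choice of $\E(\cdot,\cdot)$.

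Finally I would substitute the identification of the residual with $\E(u(\mu^i), u^\eps(\mu^i))$ into this bound, obtaining
\begin{equation*}
  \overline\Delta_n^\eps(\mu^i)
  \;\le\; C_\Delta\, \frac{\gamma(\mu^i)^{1/2}}{\alpha(\mu^i)^{3/2}}\, \E\bigl(u(\mu^i), u^\eps(\mu^i)\bigr),
\end{equation*}
which is exactly \eqref{eq:surrogate} with $C(\mu) = C_\Delta\, \gamma(\mu)^{1/2}/\alpha(\mu)^{3/2}$. The saving compared with \eqref{Eq:choice-eps:ell} comes precisely from dropping the last inequality in the previous proof, where continuity was invoked to pass from $\|r_a(u^\eps;\mu)\|_{\cX'}$ to $\gamma(\mu)\|u(\mu) - u^\eps(\mu)\|_\cX$; with the residual as error measure that step is no longer needed, so one factor of $\gamma(\mu)$ disappears. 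There is no real obstacle here — the corollary is essentially a bookkeeping observation on top of Proposition \ref{prop:multiplesnapshots:ell}, the only thing to verify carefully is that the algebraic identity $r_a(u^\eps;\mu) = \cA(\mu)(u - u^\eps)$ indeed turns the new error measure into the dual-norm residual that already appears in the middle of the previous chain of estimates.
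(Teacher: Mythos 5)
Your proposal is correct and follows exactly the paper's own proof: the paper likewise reuses the chain of estimates from Proposition \ref{prop:multiplesnapshots:ell} up to \eqref{Eq:ell:3} and then identifies $\| r_a(u^\eps(\mu^i);\mu^i)\|_{\cX'}$ with $\E(u(\mu^i),\zeta_i^\eps)$ via the identity $r_a(u^\eps;\mu)=\cA(\mu)(u-u^\eps)$. Your remark that the gain of one factor of $\gamma(\mu)$ comes precisely from dropping the final continuity step is also the right reading of where the improvement originates.
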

\begin{proof}
	Until \eqref{Eq:ell:3}, we follow the proof of Proposition \ref{prop:multiplesnapshots:ell}, i.e,
	\begin{align*}
		\overline\Delta_n^\eps(\mu^i)
		&\le
		C_\Delta\, 
			\frac{\gamma(\mu^i)^{1/2}}{\alpha(\mu^i)^{3/2}}\, \| r_a( u^\eps(\mu^i); \mu^i)\|_{\cX'} 
		= C_\Delta\, 
			\frac{\gamma(\mu^i)^{1/2}}{\alpha(\mu^i)^{3/2}}\, \E(u(\mu^i), \zeta^\eps_i),
	\end{align*}
	which proves the claim.
\end{proof}

Note, that the improvement of \eqref{Eq:choice-eps:ell-res-final} over \eqref{Eq:choice-eps:ell} or \eqref{Eq:Norm:C} is stronger than at a first glance. In fact, the use of residual instead of the norm of the error incorporates another factor of $\alpha(\mu)$.

However, we can significantly improve the above estimates when we follow the lines of Appendix \ref{App:A}. We consider the normal equation operator operator $\cA(\mu):=\cB^+(\mu)\, \cB(\mu)$ as in Proposition \ref{prop:a3}. Then, $\zeta_i^\eps:=u^\eps(\mu^i)$ is computed as an approximation of $\cA(\mu^i)\, u(\mu^i) = \cB^+(\mu^i)\, f(\mu^i) =: g(\mu^i)$ in $\cX'$. The RB-space is again defined as $X_N^\eps:=\Span\{ u^\eps(\mu^i):\, \mu^i\in S_N\}$. The RB-approximation $u^\eps_N(\mu)\in X_N^\eps$ is then computed as the Galerkin approximation w.r.t.\ the infinite-dimensional normal equation operator $\cA(\mu)$, i.e., 
\begin{equation}\label{Eq:NENbis}
	u^\eps_N(\mu)\in X_N^\eps:\quad
	\langle \cA(\mu)u^\eps_N(\mu), v_N\rangle_{\cX'\times\cX}
	= \langle g(\mu), v_N\rangle_{\cX'\times\cX},
	\quad \forall v_N\in X_N^\eps.
\end{equation}
As shown in Proposition \ref{prop:A6}, this is equivalent to the discrete Petrov-Galerkin problem on $X^\eps_N$ and $Y_N^\eps(\mu) := \cR_\cY'\, \cB(\mu)(X_N^\eps)$, where $\cR_\cY'$ is the adjoint of the Riesz operator $\cR_\cY:\cY'\to\cY$ defined in Definition \ref{def:A1}, i.e., $u^\eps_N(\mu)$ can efficiently be computed as
\begin{equation}\label{Eq:NEN}
	u^\eps_N(\mu)\in X_N^\eps:\quad
	b(u^\eps_N(\mu), w_N; \mu)
	= \langle f(\mu), w_N\rangle_{\cY'\times\cY},
	\quad \forall w_N\in Y_N^\eps(\mu).
\end{equation}
This means that we have a parameter-dependent test space, which -- however -- can be computed online-efficient thanks to the affine decomposition of the bilinear form $b(\cdot,\cdot;\mu)$ w.r.t.\ the parameter $\mu$ in \eqref{eq:affinedecomp}. In fact, in the offline stage, we compute 
	\begin{equation}\label{eq:offline}
		(\eta_{i,q}^\eps,z)_\cY = b^{(q)}(\zeta_i^\eps,z)
		\quad \forall z\in\cY,\, 
		\qquad 1\le q\le Q_b,\, 1\le i\le N,
	\end{equation}
	independent of the parameter, where $\zeta_i^\eps$ again denote the RB-basis functions of $X_N^\eps$. Since the test functions $z\in\cY$ are chosen in the infinite-dimensional space $\cY$, \eqref{eq:offline} amounts to $N\, Q_b$ adaptive solves using \textbf{SOLVE} w.r.t.\ to the Gramian operator of the Hilbert space $\cY$.

	In the online stage, for a given parameter $\mu\in\cD$ (which is not a snapshot), we set $\eta_i^\eps(\mu):=\sum_{q=1}^{Q_b} \theta_b^{(q)}(\mu)\, \eta_{i,q}^\eps$, define
	\begin{equation}\label{eq:PG-Test}
		Y_N^\eps(\mu) := \Span\{ \eta_i^\eps(\mu): \, 1\le i \le N\}
	\end{equation}
and determine $u_N^\eps(\mu)\in X^\eps_N$ by solving $b(u_N^\eps(\mu), v_N; \mu) = f(v_N; \mu)$ for $v_N\in Y_N^\eps(\mu)$. This shows that we have an efficient online-offline separation as in the `classical' RB-case. 
It is readily seen that this choice, which is an  $\infty$-dimensional adaptive analogue of the use of supremizers in the truth spaces, see \cite{MR2281777}, is inf-sup-stable independent of $N$:

\begin{proposition}\label{Prop:LBB}
	For $Y_N^\eps(\mu) := \cR_\cY'\, \cB(\mu)(X_N^\eps)$, we have
	$$
		\inf_{u^\eps_N\in X_N^\eps} \sup_{v_N^\eps(\mu)\in Y_N^\eps(\mu)}
			\frac{b(u_N^\eps, v_N^\eps; \mu)}{\| u_N^\eps\|_\cX\, \| v_N^\eps\|_\cY}
			\ge \beta(\mu) > 0
	$$
	independent of $N$.
\end{proposition}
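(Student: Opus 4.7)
My plan is to carry out the standard supremizer argument, but now applied directly at the infinite-dimensional level rather than on a discrete truth space. The whole point of the construction $Y_N^\eps(\mu):=\cR_\cY'\,\cB(\mu)(X_N^\eps)$ is precisely that it contains, for every trial function $u_N^\eps\in X_N^\eps$, the distinguished element of $\cY$ that realizes the supremum of $b(u_N^\eps,\cdot;\mu)/\|\cdot\|_\cY$ taken over all of $\cY$. So the inf-sup constant over the pair $(X_N^\eps,Y_N^\eps(\mu))$ should, by design, reduce to the continuous inf-sup constant on $(\cX,\cY)$.

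Concretely, for fixed $\mu\in\cD$ and arbitrary $u_N^\eps\in X_N^\eps$, I would propose to take as test function $v_N^\eps:=\cR_\cY'\,\cB(\mu)\,u_N^\eps$. By construction this element lies in $Y_N^\eps(\mu)$, so it is an admissible competitor in the inner supremum. I would then invoke the two defining properties of the Riesz isometry: first, $\|\cR_\cY'\,\cB(\mu)u_N^\eps\|_\cY = \|\cB(\mu)u_N^\eps\|_{\cY'}$; second, $\langle \cB(\mu)u_N^\eps,\cR_\cY'\,\cB(\mu)u_N^\eps\rangle_{\cY'\times\cY} = \|\cB(\mu)u_N^\eps\|_{\cY'}^2$. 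Combining these two identities, the Rayleigh quotient at $v_N^\eps$ reads
\[
    \frac{b(u_N^\eps,v_N^\eps;\mu)}{\|v_N^\eps\|_\cY}
    \;=\; \|\cB(\mu)u_N^\eps\|_{\cY'}
    \;=\; \sup_{v\in\cY}\frac{b(u_N^\eps,v;\mu)}{\|v\|_\cY},
\]
so that restricting the sup from $\cY$ down to the $N$-dimensional subspace $Y_N^\eps(\mu)$ loses nothing whatsoever.

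Dividing by $\|u_N^\eps\|_\cX$ and taking the infimum over $u_N^\eps\in X_N^\eps\subset\cX$, the whole bound collapses onto the continuous Ne\v{c}as condition \eqref{Necas}, which immediately delivers the uniform lower bound $\beta(\mu)$ independently of $N$. I do not anticipate any genuine obstacle; the one place that deserves care is functional-analytic bookkeeping, namely verifying that $\cR_\cY'$ really does map $\cY'$ into $\cY$ and that the dual pairing aligns with the chosen inner product on $\cY$ so that the two Riesz identities above hold in the stated form. Both points are routine consequences of the Riesz representation theorem and are already built into the framework of Appendix \ref{App:A} invoked just before the proposition.
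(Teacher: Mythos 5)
Your proposal is correct and follows essentially the same route as the paper: you test with the supremizer $v_N^\eps=\cR_\cY'\,\cB(\mu)u_N^\eps\in Y_N^\eps(\mu)$, observe via the Riesz isometry that the supremum over $Y_N^\eps(\mu)$ already attains the supremum over all of $\cY$, and then invoke the continuous Ne\v{c}as condition \eqref{Necas}. The paper phrases the same step by characterizing $y_N^\eps(\mu)\in Y_N^\eps(\mu)$ as the unique element with $(y_N^\eps(\mu),z)_\cY=b(u_N^\eps,z;\mu)$ for all $z\in\cY$, which is exactly your Riesz representer, so the two arguments coincide.
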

\begin{proof}
	Let $u_N^\eps\in X_N^\eps$ be arbitrary. Since $Y_N^\eps(\mu) := \cR_\cY'\, \cB(\mu)(X_N^\eps)$, there exists a unique $y_N^\eps(\mu)\in Y_N^\eps(\mu)$ such that $(y_N^\eps(\mu),z)_\cY = b(u_N^\eps, z; \mu)$ for all $z\in\cY$. Using the inf-sup-stability of $b(\cdot,\cdot;\mu)$ yields
	$$
		\beta(\mu) \| u_N^\eps\|_\cX 
		\le \sup_{z\in\cY} \frac{b(u_N^\eps, z; \mu)}{\| z\|_\cY}
		= \sup_{z\in\cY} \frac{(y_N^\eps(\mu),z)_\cY}{\| z\|_\cY}
		= \| y_N^\eps(\mu)\|_\cY.
	$$
	Hence, 
	$$
		\sup_{v_N^\eps(\mu)\in Y_N^\eps(\mu)} \frac{b(u_N^\eps, v_N^\eps; \mu)}{\| v_N^\eps\|_\cY}
		\ge \frac{b(u_N^\eps,  y_N^\eps(\mu); \mu)}{\|  y_N^\eps(\mu)\|_\cY}
		= \| y_N^\eps(\mu)\|_\cY
		\ge \beta(\mu) \| u_N^\eps\|_\cX, 
	$$
	which proves our claim.
\end{proof}

This setting has also an important consequence for the error estimation. Since the RB-solution $u_N^\eps(\mu)$ is an approximate solution of the normal equations \eqref{Eq:NENbis} in $X_N^\eps$ (even though computed as Petrov-Galerkin projection \eqref{Eq:NEN}), Proposition \ref{prop:a4} implies
$$
	\| \cB(\mu^i) ( u(\mu^i) - u_N^\eps(\mu^i))\|_{\cY'}
	=
	\inf_{v_N\in X_N^\eps} 
	\| \cB(\mu^i) ( u(\mu^i) - v_N)\|_{\cY'},
$$
in particular -- since $\zeta_i^\eps\in X_N^\eps$ --
\begin{align*}
	\E(u(\mu^i), u_N^\eps(\mu^i))
	&=
	\| \cB(\mu^i) ( u(\mu^i) - u_N^\eps(\mu^i))\|_{\cY'}\\
	&\le
	\| \cB(\mu^i) ( u(\mu^i) - \zeta^\eps_i)\|_{\cY'}
	= 	\E(u(\mu^i), \zeta^\eps_i).
\end{align*}

\begin{proposition}\label{prop:multiplesnapshots:optimal}
 	Let $\E(x,\tilde x):=\| \cB(\mu)(x-\tilde{x})\|_{\cX'}$ and assume that the RB-solutions are computed via the normal equations \eqref{Eq:NEN}. Then,  \eqref{eq:surrogate} holds with 
	\begin{equation}\label{Eq:choice-eps:ell-res}
		C(\mu^i) := \frac{C_\Delta}{\beta(\mu^i)}.
	\end{equation}
\end{proposition}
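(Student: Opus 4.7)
The plan is to chain together three ingredients: the surrogate equivalence \eqref{Eq:dRes}, the identification of the residual with the chosen error measure $\E(\cdot,\cdot)$, and the best approximation property of the normal equation Galerkin solution furnished by Proposition \ref{prop:a4}.

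First, I would start from the residual-based bound. By \eqref{Eq:dRes} and the definition \eqref{Eq:Delta-eps-N} of $\Delta_n^\eps(\mu^i)$,
\[
\overline\Delta_n^\eps(\mu^i)
\le C_\Delta\,\Delta_n^\eps(\mu^i)
= \frac{C_\Delta}{\beta(\mu^i)}\,\| r_b(u_n^\eps(\mu^i);\mu^i)\|_{\cY'}.
\]
Next I would rewrite the dual norm of the residual as the chosen error measure. Since $r_b(u_n^\eps(\mu^i);\mu^i) = f(\mu^i) - \cB(\mu^i) u_n^\eps(\mu^i) = \cB(\mu^i)(u(\mu^i) - u_n^\eps(\mu^i))$, this gives
\[
\| r_b(u_n^\eps(\mu^i);\mu^i)\|_{\cY'}
= \| \cB(\mu^i)(u(\mu^i) - u_n^\eps(\mu^i))\|_{\cY'}
= \E\bigl(u(\mu^i), u_n^\eps(\mu^i)\bigr).
\]

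Finally, I would invoke the best approximation property of the normal equation solution. Because $u_n^\eps(\mu^i)$ is computed via \eqref{Eq:NEN}, which by Proposition \ref{prop:A6} is equivalent to the Galerkin approximation \eqref{Eq:NENbis} of the normal equations on $X_n^\eps$, Proposition \ref{prop:a4} yields the optimality
\[
\| \cB(\mu^i)(u(\mu^i) - u_n^\eps(\mu^i))\|_{\cY'}
= \inf_{v_N\in X_n^\eps} \| \cB(\mu^i)(u(\mu^i) - v_N)\|_{\cY'}
\le \| \cB(\mu^i)(u(\mu^i) - \zeta_i^\eps)\|_{\cY'},
\]
the last inequality following because $\zeta_i^\eps \in X_n^\eps$ for $\mu^i \in S_n$. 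Combining the three displays gives
\[
\overline\Delta_n^\eps(\mu^i) \le \frac{C_\Delta}{\beta(\mu^i)}\,\E\bigl(u(\mu^i),\zeta_i^\eps\bigr),
\]
which is precisely \eqref{eq:surrogate} with the claimed constant $C(\mu^i) = C_\Delta/\beta(\mu^i)$.

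I do not expect any real obstacle here: the only nontrivial step is the best approximation property in $\|\cB(\mu^i)\cdot\|_{\cY'}$, and that is already packaged in Proposition \ref{prop:a4} via the normal equation formulation. The main conceptual point to flag is that the gain over Corollary \ref{prop:multiplesnapshots:ell-res} comes from two sources simultaneously, namely measuring the snapshot error directly in the residual norm (so no factor $\gamma(\mu)$ appears) and using the $\cB$-orthogonality of the normal equation Galerkin projection (so no factor $\alpha(\mu)^{-1}$ appears), leaving only the single inf-sup factor $\beta(\mu^i)^{-1}$ from \eqref{Eq:Delta-eps-N}.
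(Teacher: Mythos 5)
Your proposal is correct and follows essentially the same route as the paper: bound $\overline\Delta_n^\eps(\mu^i)$ by $C_\Delta\Delta_n^\eps(\mu^i)$ via \eqref{Eq:dRes}, identify the residual dual norm with $\E(u(\mu^i),u_n^\eps(\mu^i))$, and then use the best-approximation property of the normal-equation Galerkin solution (Proposition \ref{prop:a4}) together with $\zeta_i^\eps\in X_n^\eps$ to pass to $\E(u(\mu^i),\zeta_i^\eps)$. The only difference is cosmetic: the paper establishes the inequality $\E(u(\mu^i),u_N^\eps(\mu^i))\le\E(u(\mu^i),\zeta_i^\eps)$ in the displayed text just before the proposition and cites it, whereas you re-derive it inside the proof.
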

\begin{proof}
	By \eqref{Eq:dRes}, we have 
	$$
	\overline\Delta_N^\eps(\mu) 
	\le C_\Delta\, \Delta^\eps_N(\mu)
	= C_\Delta\, \frac{\| r^\eps_{b,N}(\mu)\|_{\cY'}}{\beta(\mu)}
	=  C_\Delta\, \frac{\| \cB(\mu) ( u(\mu) - u^\eps_N(\mu))\|_{\cY'}}{\beta(\mu)}. 
	$$
	For $\mu=\mu^i\in S_N$, we get 
	$$
	\overline\Delta_N^\eps(\mu^i)
	\le C_\Delta\, \frac{\E(u(\mu^i), u_N^\eps(\mu^i))}{\beta(\mu^i)}
	\le \frac{C_\Delta}{\beta(\mu^i)}\, \E(u(\mu^i), \zeta_i^\eps),
	$$
	which proves the claim.
\end{proof}

\begin{remark}
	Obviously, the above estimate significantly improves the previous ones. It also holds in the elliptic case with $\beta(\mu^i)$ replaced by the coercivity constant $\alpha(\mu^i)$.
\end{remark}

We summarize our findings.

\begin{theorem}\label{thm:opt}
	Let  $\widetilde{\text{tol}}>0$ be a given Greedy tolerance, set $\E(x,\tilde x):=\| \cB(\mu)(x-\tilde{x})\|_{\cX'}$ as the error measure in \eqref{Eq:approximate} for the error estimator and assume that the RB-approximations are computed as Petrov-Galerkin solution in \eqref{Eq:NEN}. 
		Then, by setting
	\begin{equation}\label{Eq:choice-eps-final}
		\eps(\mu) := \widetilde{\text{tol}}\, \frac{c_\Delta}{C_\Delta}\,\beta(\mu), 
	\end{equation}
	we have: if Algorithm \ref{alg:EpsDeltaGreedy} terminates for some $N<N_{\max}$, we get the estimate 
	$\max_{\mu\in \cD_{\text{train}}} \| e^\eps_N(\mu)\|_\cX  < \widetilde{\text{tol}}$.
	In particular, if multiple selection of snapshots occurs, Algorithm \ref{alg:EpsDeltaGreedy} terminates in line \ref{Alg:stop}.\hfill\qed
\end{theorem}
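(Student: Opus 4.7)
The plan is essentially to combine Proposition \ref{prop:multiplesnapshots:optimal} with the generic convergence result Proposition \ref{prop:multiplesnapshots}, since all the heavy lifting has already been done. First I would observe that, under the hypotheses of Theorem \ref{thm:opt} (the choice of residual-type error measure $\E(x,\tilde x) = \|\cB(\mu)(x-\tilde x)\|_{\cX'}$ and the Petrov-Galerkin projection \eqref{Eq:NEN} onto $Y_N^\eps(\mu) = \cR_\cY'\,\cB(\mu)(X_N^\eps)$), Proposition \ref{prop:multiplesnapshots:optimal} directly yields the validity of the abstract surrogate assumption \eqref{eq:surrogate} with
\[
    C(\mu^i) \,=\, \frac{C_\Delta}{\beta(\mu^i)}.
\]
This is exactly the ingredient needed to invoke Proposition \ref{prop:multiplesnapshots}.

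Next I would verify that the snapshot tolerance prescribed in \eqref{Eq:choice-eps-final} is consistent with the prescription \eqref{Eq:choice-eps} required by Proposition \ref{prop:multiplesnapshots}: plugging $C(\mu)= C_\Delta/\beta(\mu)$ into \eqref{Eq:choice-eps} gives
\[
    \eps(\mu) \,=\, \widetilde{\text{tol}}\,\frac{c_\Delta}{C(\mu)} \,=\, \widetilde{\text{tol}}\,\frac{c_\Delta}{C_\Delta}\,\beta(\mu),
\]
which coincides with \eqref{Eq:choice-eps-final}. Hence all the hypotheses of Proposition \ref{prop:multiplesnapshots} are in force.

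Finally, an application of Proposition \ref{prop:multiplesnapshots} yields both assertions of the theorem: (i) if Algorithm \ref{alg:EpsDeltaGreedy} stops with $N<N_{\max}$ via the criterion in line \ref{Alg:stop}, then $\max_{\mu\in\cD_{\text{train}}}\|e^\eps_N(\mu)\|_\cX < \widetilde{\text{tol}}$; and (ii) in case of multiple selection $\mu^{n+1}\in S_n$, the chain of inequalities in the proof of Proposition \ref{prop:multiplesnapshots} forces $\overline\Delta^\eps_n(\mu^{n+1}) < c_\Delta\,\widetilde{\text{tol}}$, so the stopping criterion in line \ref{Alg:stop} activates and termination is guaranteed.

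Since the proof is merely a composition of already-established results, I do not anticipate any real obstacle; the only thing to be careful about is making the role of $c_\Delta$ versus $C_\Delta$ in the two prescriptions consistent, and confirming that Proposition \ref{prop:multiplesnapshots:optimal} applies with the general (not necessarily coercive) inf-sup constant $\beta(\mu)$, which it does by construction via the normal equation formulation \eqref{Eq:NEN}.
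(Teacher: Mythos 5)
Your proof is correct and follows exactly the route the paper intends: Theorem \ref{thm:opt} is stated with a terminal \qed precisely because it is the composition of Proposition \ref{prop:multiplesnapshots:optimal} (which supplies \eqref{eq:surrogate} with $C(\mu)=C_\Delta/\beta(\mu)$) with Proposition \ref{prop:multiplesnapshots}, and your check that substituting this $C(\mu)$ into \eqref{Eq:choice-eps} reproduces \eqref{Eq:choice-eps-final} is the only verification needed.
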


\subsubsection{The Normal Equation Residual}\label{Sec:NERes}
If the adaptive algorithm for computing approximate snapshots uses the residual of the normal equation
$$
	r_\cA(w;\mu)
	:= g(\mu) - \cA(\mu)w
	= \cB^+(\mu) (f(\mu)-\cB(\mu)w)
	=: \cB^+(\mu)\, r_b(w;\mu) \in \cX',
$$
i.e., $\E(u(\mu),u^\eps(\mu)):= \| r_\cA(u^\eps(\mu); \mu)\|_{\cX'}$ as stopping criterium, we can easily reformulate the above results. In fact, since
$$
	\| e_N^\eps(\mu)\|_\cX 
	\le \frac1{\beta(\mu)^2}
	\| r_\cA(u_N^\eps(\mu); \mu)\|_{\cX'}
	=: \Delta^\eps_{\cA,N}(\mu),
$$
which is an easy consequence of the fact that $\cA(\mu)$ is coercive with coercivity constant $\beta(\mu)^2$. In a similar way, we define a surrogate $\overline\Delta^\eps_{\cA,N}(\mu)$. Finally, for $\mu^i\in S_N$, we have by Proposition \ref{prop:a4} the relation 
$\| r_\cA(u^\eps_N(\mu^i); \mu^i)\|_{\cX'}
\le
\| r_\cA(\zeta_\eps^i; \mu^i)\|_{\cX'}$, 	
so that
$$
	\eps(\mu) = \widetilde{\text{tol}}\, \frac{c_\Delta}{C_\Delta}\,\beta(\mu)^2
$$
is the appropriate choice for the snapshot tolerance in this case, i.e., one gets another multiplicative factor of $\beta(\mu)$.


\section{{Adaptive Wavelet Galerkin Methods (AWGMs)}} \label{sec:awgm}
To obtain an adaptive approximation for the  snapshots $u^\cN(\mu)$ as well as the error estimators $\Delta_N^\cN(\mu)$ we employ \emph{adaptive wavelet Galerkin methods (AWGMs)} that have first been introduced in \cite{CDD01, CDD02} for stationary problems and extended to space-time variational parabolic problems in \cite{SchwabStevenson}. 
We will also use wavelet methods to construct a computable approximate error estimator $\overline\Delta_{N}^\eps(\mu)$ as in \S\ref{sec:equiverrorest}, \eqref{Eq:dRes}.

For the AWGM, we used \emph{multitree-based} versions developed in \cite{Kestler:Diss, MT-LS-AWGM,Kestler:2012d,Kestler:2012c}, which we briefly review. Let $\cA:\cX\to\cY'$ be a linear differential (or integral) operator which may or may not depend on $\mu\in\cD$. Given some $b\in\cY'$, we look for $x\in\cX$ such that
\begin{equation}\label{eq:GP}
	\cA x = b \quad\text{in }\, \cY'.
\end{equation}

\subsection{Equivalent Bi-infinite Matrix-Vector Problem}
Variational equations of the form \eqref{eq:GP} can be reformulated as \emph{equivalent} $\ell_{2}$-problems by considering \emph{Riesz bases} of the Hilbert spaces $\cX$, $\cY$. We call $\Upsilon := \{ \gamma_i : i \in \N \}\subset\cZ$ a \emph{Riesz basis} for a separable Hilbert space $\cZ$ if its linear span is dense in $\cZ$ and  if there exist $\mathrm{c},\mathrm{C}>0$ such that 
\begin{equation} \label{eq:riesz_basis}
	\mathrm{c}\| \bv \|^2_{\ell_2(\N)} 
	\leq \| v \|^2_{\cZ} 
	\leq \mathrm{C} \| \bv \|^2_{\ell_2(\N)} 
	\quad \forall \bv = (v_i)_{i \in \N} \in \ell_2(\N),\ v = \sum_{i=1}^\infty v_i \gamma_i.
\end{equation}
For $\cX$, $\cY$, we denote these Riesz wavelet bases by 
\begin{equation} \label{eq:Psis_cXcY}
   	\hatbPsi^{\cX} := \big\{ \hatbpsi^{\cX}_{\blambda} : \blambda \in \hatbcJ \big\} \subset \cX, \qquad 
	\checkbPsi^{\cY} := \big\{ \checkbpsi^{\cY}_{\blambda} : \blambda \in \checkbcJ \big\} \subset \cY,
\end{equation}
for countable index sets $\hatbcJ$, $\checkbcJ$. Such bases can be constructed by first building univariate \emph{wavelet bases} $\Psi=\{ \psi_\lambda:\, \lambda\in\cJ\}$ for $L_{2}(0,1)$ that are sufficiently smooth to constitute (after a proper normalization) also Riesz bases for a whole range of Sobolev spaces $H^s(0,1)$, $s\in (-\tilde\gamma, \gamma)$, where $\gamma, \tilde\gamma>0$ depend on the choice of the wavelets, \confer \cite{Urban:WaveletBook}. Typically the index takes the form $\lambda=(j,k)$, where $|\lambda|:=j$ denotes the \emph{level} (e.g., $|\supp\, \psi_\lambda|\sim 2^{-|\lambda|}$) and $k$ the location in $(0,1)$, e.g., the center of its support. We consider piecewise polynomial wavelets of order $d$ (degree plus one). Wavelets are oscillating (``small waves'') which is reflected by their degree $m$ of \emph{vanishing moments}, i.e., 
$\int_0^1 x^r \psi_\lambda(x)\, dx=0$ for all $|\lambda|>0$ and all $0\le r\le m-1$, where $|\lambda|=0$ denotes the coarsest level, $0=\min_{\lambda\in\cJ} |\lambda|$. Those functions are no `true' wavelets but, e.g., splines (scaling functions). The above mentioned constants $\gamma$ and $\tilde\gamma$ are determined by $d$, $m$ and $\tilde d$, $\tilde m$, which are the corresponding parameters of the \emph{dual} wavelet basis $\tilde\psi=\{ \tilde\psi_\lambda:\, \lambda\in\cJ\}$ with $\int_0^1 \psi_\lambda(x)\, \tilde\psi_\lambda(x)\, dx=\delta_{\lambda, \tilde\lambda}$ for all $\lambda, \tilde\lambda\in\cJ$ with $|\lambda|, |\tilde\lambda|>0$.

Tensorization of the univariate functions then allows for appropriate bases in higher dimensions as well as for a vast range of Bochner spaces arising in the formulation of parabolic PDEs, see, \eg, \cite{SchwabStevenson}. Constructions for more complicated domains $\Omega$ are also available.

Then, we equivalently formulate \eqref{eq:GP}  as the discrete, but infinite-dimensional equation
\begin{equation} \label{eq:equiv_ell2_prob}
  \text{Find }\bx \in \ell_2(\hatbcJ): \qquad 
  	\bA \bx = \bb,
	\qquad \bb \in \ell_2(\checkbcJ),
\end{equation}
where $\bA := \eval{\checkbPsi^{\cY}}{\cA[\hatbPsi^{\cX}]}$, 
$\bb = \big[ b(\checkbpsi^{\cY}_{\blambda}) \big]_{\blambda \in \checkbcJ}$ and $\bx$ are the coefficients of the (unique) expansion $x = \bx^{\top}\hatbPsi^{\cX}$.

\subsection{Adaptive Methods and Nonlinear Approximation}
In order to approximately solve the infinite-dimensional equation \eqref{eq:equiv_ell2_prob}, AWGMs iteratively construct a sequence of \emph{nested finite} index sets $(\hatbLambda_{k})_{k} \subset \hatbcJ$, $(\checkbLambda_{k})_{k} \subset \checkbcJ$, to which \eqref{eq:equiv_ell2_prob} is restricted. 
Considering (just for ease of presentation) a linear self-adjoint operator $\cA:\cX \to\cX$ and $\hatbPsi = \hatbPsi^{\cX} = \checkbPsi^{\cY}$, in each iteration the finite-dimensional problem
\begin{equation} \label{eq:galerkin_system_elliptic}
 	\text{Find } \bx_{\hatbLambda_k} \in \ell_2(\hatbLambda_k): 
	\qquad {}_{\hatbLambda_k}\!\!\bA_{\!\hatbLambda_k}\, \bx_{\hatbLambda_k} = \bb_{\hatbLambda_{k}}, 
	\quad \bb_{\hatbLambda_k} \in \ell_2(\hatbLambda_k),
\end{equation}
 is solved, where for $\bLambda \subset \bcJ$, $\bv_{\bLambda} := \bv|_{\bLambda}$ denotes the restriction of $\bv \in \ell_{2}(\bcJ)$ to $\ell_{2}(\bLambda)$ and ${}_{\bLambda}\!\bA_{\!\bLambda} := (\bA \bE_{\bLambda})|_{\bLambda}$ with trivial embedding $\bE:\ell_{2}(\bLambda) \to \ell_{2}(\bcJ)$ the restriction of $\bA$ in both rows and columns.
 
The extension of $\bLambda_{k}$ to $\bLambda_{k+1}$ is then based on the residual $\br^{k}  :=\bb - \bA \bx_{\bLambda_{k}}$ and its norm $\norm{\br^{k}}_{\ell_{2}(\bcJ)}$ which forms an equivalent error estimator, since
\begin{equation} \label{eq:estimate_elliptic_problem}
    \| \bA \|^{-1} \| \br^{k} \|_{\ell_2(\hatbcJ)} 
    		\leq \|   \bx -  \bx_{\hatbLambda_k} \|_{\ell_2(\hatbcJ)}  
		\leq  \| \bA^{-1} \| \| \br^{k} \|_{\ell_2(\hatbcJ)}.
\end{equation}
Note that $\br^{k}$ is supported on the \emph{infinite-dimensional} set $\hatbcJ$ even if $\bx_{\bLambda_{k}}$ is finitely supported. Hence we have to use appropriate approximation methods for the residual evaluation in order to arrive at an implementable AGWM, see \S \ref{subsec:MTImpls} below.

The next index set is obtained by a so-called \emph{bulk-chasing}: choose $\bLambda_{k+1} \supset \bLambda_{k}$ as the smallest index set such that $\norm{\br^{k}_{\bLambda_{k+1}}}_{\ell_{2}(\bLambda_{k+1})} \geq c \norm{\br^{k}}_{\ell_{2}(\bcJ)}$ for some $0 < c < 1$. This implies that the indices of the largest residual coefficients are added to $\bLambda_{k}$ and the adaptive index set is steered into the direction of the largest error. 

Under appropriate assumptions on the exactness and computational cost of the solution of \eqref{eq:galerkin_system_elliptic}, the approximation of $\br^{k}$ and the implementation of the bulk chasing process, a \emph{quasi-optimality} result is known. In order to formulate it, we introduce the nonlinear approximation class {(recall that $\hatbPsi$ is a Riesz basis)}
\begin{equation}\label{eq:DefAs}
	\fA^s :=\! \big\{  \bv \in \ell_2(\hatbcJ) \!:\! \| \bv \|_{\fA^s} 
	\!:=\! \sup_{\eps >0} \eps \!\cdot\! \big[  \min\{ \cN \in \N_0 \!:\! \| \bv - \bv_\cN \|_{\ell_2(\hatbcJ)} \leq \eps  \}  \big]^s \!<\! \infty  \big\}
\end{equation}
with $\bv_{\cN}$ being the best $\cN$-term approximation on $\bv$, consisting of the $\cN$ largest coefficients in modulus of $\bv$.

\begin{theorem}[{\confer \cite{Gantumur:2007,Stevenson:2009}}]\label{theo:quasioptimalAWGM}
There exist implementable routines and parameters such that the (approximate) computations of $\bx_{\hatbLambda_{k}}$, $\br^{k}$ and $\hatbLambda_{k+1}$ can be performed with controllable tolerances and computational cost: 
if the AWGM is terminated when $\norm{\br^{k}_{\hatbLambda_{k}}}_{\ell_{2}(\hatbLambda_{k})} \leq \eps / \norm{\bA^{-1}}$, 
the output $\bx_{\eps} := \bx_{\hatbLambda_{k}}$ satisfies $\| \bx - \bx_{\eps} \|_{\ell_2(\hatbcJ)} \leq \eps$.
If, moreover, $\bx \in \fA^s$ for some $s>0$, it holds for $\cN_k := \# \hatbLambda_k$ that
\begin{equation} \label{eq:quasi_optimal_awgm}
     \| \bx - \bx_{\eps} \|_{\ell_2(\hatbcJ)} 
     \le C\, \| \bx \|^{1/s}_{\fA^s} \cN_k^{-s} , 
     \qquad \# \supp \bx_{\eps} \le C\, \eps^{-1/s} \| \bx \|^{1/s}_{\fA^s}.
\end{equation}
If $s$ is small enough, the computation of $\bx_{\eps}$ can be realized with a computational cost that is bounded by an absolute multiple of $\eps^{-1/s} \| \bx \|^{1/s}_{\fA^s}$, i.e., linear complexity.{\footnote{{This notion means that the solution can be computed with cost which is in the order of the number of unknowns, recall the second estimate in \eqref{eq:quasi_optimal_awgm}.}}}\qed
\end{theorem}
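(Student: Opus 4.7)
The plan is to follow the template established by Gantumur, Harbrecht and Stevenson (for elliptic, self-adjoint $\cA$) and its refinements by Stevenson and collaborators, adapted to the present notation. The argument splits naturally into three layers: (i) availability of the subroutines, (ii) error reduction per outer iteration, (iii) quasi-optimality via an a posteriori coarsening step.

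First I would introduce the three building blocks used internally by the AWGM: an approximate right-hand side routine $\mathbf{RHS}[\bb,\eta]$ producing a finitely supported $\bb_\eta$ with $\|\bb-\bb_\eta\|_{\ell_2(\hatbcJ)}\le \eta$; an approximate matrix-vector routine $\mathbf{APPLY}[\bA,\bv,\eta]$ producing a finitely supported $\mathbf{w}_\eta$ with $\|\bA\bv-\mathbf{w}_\eta\|_{\ell_2(\hatbcJ)}\le \eta$; and a coarsening routine $\mathbf{COARSE}[\bv,\eta]$ returning a finitely supported vector which, up to a factor, realizes the best $\cN$-term approximation of $\bv$ within tolerance $\eta$. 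The key quantitative property I would invoke is $s$-compressibility of $\bA$ in the wavelet basis $\hatbPsi$: for every $j\ge 0$ there exists a matrix $\bA_j$ with $\mathcal{O}(2^j)$ nonzeros per row/column and $\|\bA-\bA_j\|\lesssim 2^{-js}$, for every $s<s^\ast$ with $s^\ast$ depending on the vanishing moments, smoothness, and order of the wavelets, together with the operator order. This property, which holds for the differential operators arising from \eqref{eq:variationalproblem} under the standard wavelet constructions, is what makes $\mathbf{APPLY}$ realizable in linear complexity on $\fA^s$.

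Next I would analyze the outer loop. With exact arithmetic the Galerkin solution on $\hatbLambda_k$ satisfies a Céa-type estimate, and bulk chasing with parameter $c\in(0,1)$ on the exact residual gives an error reduction $\|\bx-\bx_{k+1}\|_{\ell_2(\hatbcJ)}\le \varrho \|\bx-\bx_k\|_{\ell_2(\hatbcJ)}$ with a contraction constant $\varrho<1$ depending on $c$ and the condition number of $\bA$; the residual identity \eqref{eq:estimate_elliptic_problem} then supplies the a posteriori control required for the stopping criterion $\|\br^k_{\hatbLambda_k}\|\le \eps/\|\bA^{-1}\|$. To pass from exact to inexact arithmetic I would feed $\mathbf{RHS}$ and $\mathbf{APPLY}$ with tolerances tied geometrically to the current estimated error, so that the perturbation only degrades the contraction factor to some $\tilde\varrho<1$. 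This yields the termination statement $\|\bx-\bx_\eps\|_{\ell_2(\hatbcJ)}\le \eps$.

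For the quasi-optimality statement I would interleave a $\mathbf{COARSE}$ step after each bulk-chasing step, calibrated so that the output has at most a constant multiple of the cardinality of the best $\cN$-term approximation achieving the current error level. This is the classical trick: if $\bx\in \fA^s$, then any near-best $N$-term approximation with error $\le \eta$ uses at most $\lesssim \eta^{-1/s}\|\bx\|_{\fA^s}^{1/s}$ coefficients, and the coarsened iterate inherits this bound up to a constant. Combining the geometric decay of the error with this size control gives both estimates in \eqref{eq:quasi_optimal_awgm}. Finally, the total work is dominated by the cost of $\mathbf{APPLY}$ and $\mathbf{COARSE}$ on index sets whose size is controlled by the same quantity, whence linear complexity for $s<s^\ast$. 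The main obstacle, and the reason the cited results are nontrivial, is precisely the interplay between $s$-compressibility of $\bA$ and the cardinality bookkeeping through $\mathbf{COARSE}$: one must show that the inexact APPLY does not enlarge the support beyond what is permitted by the approximation class, which is exactly what fixes the admissible range of $s$.
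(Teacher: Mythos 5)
The paper offers no proof of this theorem at all: it is stated as a quotation from the literature (note the \confer citation and the terminal \qed), and the surrounding text in Section \ref{subsec:MTImpls} only discusses which implementations realize it. So the relevant comparison is between your sketch and the proofs in the cited works. Your outline is a correct and reasonably complete account of the \emph{classical} argument of Cohen--Dahmen--DeVore: \textbf{RHS}/\textbf{APPLY}/\textbf{COARSE} routines, $s^\ast$-compressibility of $\bA$, contraction of the error under bulk chasing with inexactness tolerances tied geometrically to the current error, and cardinality control via an interleaved coarsening step. However, this is not the route of the reference actually cited: the main point of \cite{Gantumur:2007} is precisely that quasi-optimality can be obtained \emph{without} any coarsening of the iterands, by choosing the bulk-chasing parameter small enough (below a threshold depending on the condition number of $\bA$) so that the smallest set satisfying the bulk criterion is automatically comparable in cardinality to the best $\cN$-term sets; the paper itself emphasizes this distinction in Section \ref{subsec:MTImpls} (``does not require thresholding and can thus be proven to be more efficient''). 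Moreover, the implementation the paper relies on replaces \textbf{APPLY} altogether by multitree-restricted exact matrix-vector products (Theorems \ref{thm:mv_mt} and \ref{thm:primal_res}), so the compressibility bookkeeping you describe is traded for a constrained approximation class $\cAmtree^s$. Your coarsening-based argument does prove the stated theorem, but it buys optimality at the cost of an extra routine and a quantitatively worse algorithm; the coarsening-free argument is sharper but requires the more delicate lemma relating the minimal bulk set to best $\cN$-term supports. One small point to tighten: the error-reduction and support bounds you combine at the end should be checked against the precise exponents in \eqref{eq:quasi_optimal_awgm}, since the cardinality bound $\#\supp\bx_\eps\lesssim\eps^{-1/s}\|\bx\|_{\fA^s}^{1/s}$ comes directly from the definition \eqref{eq:DefAs}, whereas the first estimate requires the additional step of converting the support bound back into a rate in $\cN_k$.
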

Theorem \ref{theo:quasioptimalAWGM} states that AWGMs are quasi-optimal in the sense that the optimal convergence rate for best $\cN$-term approximations of $\bx$ can be realized up to some constant within linear computational complexity. These techniques can be extended to problems that are neither symmetric nor positive-definite by considering the normal equations $\bA^{\!\top}\!\bA\bx = \bA^{\!\top}\! \bb$. This includes Petrov-Galerkin problems as they arise, \eg, in space-time formulations of parabolic PDEs, even if the wavelet bases $\hatbPsi^{\cX}$, $\checkbPsi^{\cY}$ for $\cX$ and $\cY$ differ not only in scaling but are even obtained from different sets of wavelets \cite{Chegini:2011, MT-LS-AWGM}.

\subsection{Multitree-based Implementations}\label{subsec:MTImpls}
Several different implementations of quasi-optimal AWGMs have been proposed. The algorithms in \cite{CDD01, CDD02} use a \emph{thresholding step} in order to retrieve the optimal computational complexity in Theorem \ref{theo:quasioptimalAWGM}, which in the case of \cite{CDD02} is combined with an inexact Richardson iteration on the infinite-dimensional equation \eqref{eq:equiv_ell2_prob}. In \cite{Gantumur:2007} a residual approximation method is employed that does not require thresholding and can thus be proven to be more efficient. However, like the afore-mentioned algorithms it relies on the application of a so-called \textbf{APPLY} routine in order to approximate the arising infinite-dimensional matrix-vector products $\bA \bv \in \ell_{2}(\hatbcJ)$. Such routines are based on \emph{wavelet compression schemes}, require certain characteristics of the wavelet bases as well as compressibility results for the operator $\cA$ and are in general quantitatively demanding. 
For these reasons, we employ \emph{multitree-based} matrix-vector product evaluations in the solution of \eqref{eq:galerkin_system_elliptic} and the approximation of the residual $\br^{k}$, as proposed in \cite{Kestler:2012d,Kestler:2012c}. That is, we restrict the index sets $\hatbLambda_{k}$ to multitrees in the sense of the following definition.

\begin{definition}
(i) For a univariate uniformly local, piecewise polynomial wavelet basis $\Psi = \{ \psi_\lambda : \lambda \in \cJ\}$, a set $\Lambda \subset \cJ$ is called a \emph{tree} if for any $\lambda \in \Lambda$ with $|\lambda|>0$ it holds that $\supp\,\psi_\lambda \subset \bigcup_{\mu\in\Lambda; |\mu|=\lambda-1} \supp\,\psi_\mu$. 
(ii) An index set $\bLambda \in \bcJ$ belonging to a tensor product wavelet basis $\bPsi = \{ \bpsi_{\lambda} : \blambda \in \bcJ \}$ is  called a \emph{multitree} if for all $i \in \{ 0,\ldots,n\}$ and all indices  $\mu_j \in \cJ^{(j)}$ for  $j \neq i$,  the index set
\begin{equation} \label{eq:tree_from_multitree}
   \Lambda^{(i)} := \{ \lambda_i \in \cJ^{(i)} :  (\mu_0, \ldots,\mu_{i-1},\lambda_i,\mu_{i+1},\ldots,\mu_n) \in \bLambda\}    \subset \cJ^{(i)}
\end{equation}
is either the empty set or a tree.\qed
\end{definition}

The restriction to such index sets preserves the quasi-optimality of the AWGM \cite{Kestler:2012d} in the constrained approximation class $\cAmtree^s$ defined w.r.t. $\| \bv \|_{\cAmtree^s} := \sup_{\eps >0} \eps \cdot \big[  \min\{ \cN \in \N_0: \| \bv - \bv_\cN \|_{\ell_2(\hatbcJ)} \leq \eps \; \wedge \; \supp \bv_{\cN} \text{ is a multitree} \} \big]^s$
and allows a computationally very efficient evaluation of finite-dimensional matrix-vector products:
\begin{theorem}[{\cite[Theorem 3.1]{Kestler:2012c}}] \label{thm:mv_mt}
	Let $\cA$ be a linear differential operator with polynomial coefficients and {let} $\hatbLambda \subset \hatbcJ$, $\checkbLambda \in \checkbcJ$ be multitrees.
Then, for any $\bv_{\hatbLambda} \in \ell_2(\hatbLambda)$, the product ${}_{\checkbLambda} \bA_{\hatbLambda}\, \bv_{\hatbLambda}$ can be {computed} in $\cO(\# \hatbLambda + \# \checkbLambda)$ operations.
\end{theorem}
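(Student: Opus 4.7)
The plan is to reduce the multi-dimensional matrix-vector product to a sequence of univariate operations that exploit the tree structure in each slice. First I would use the assumption that $\cA$ has polynomial coefficients to write it, after tensorization of the wavelet basis $\hatbPsi = \hatbpsi^{(0)} \otimes \cdots \otimes \hatbpsi^{(n)}$ (and analogously for $\checkbPsi$), as a finite sum
\[
	\cA = \sum_{k=1}^{K} \bigotimes_{i=0}^{n} \cA_k^{(i)},
\]
where each $\cA_k^{(i)}$ is a univariate differential operator whose entries $\eval{\checkpsi^{(i)}_\mu}{\cA_k^{(i)} \hatpsi^{(i)}_\lambda}$ are nonzero only when the supports of $\checkpsi^{(i)}_\mu$ and $\hatpsi^{(i)}_\lambda$ overlap. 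Since $K$ is finite and independent of $\hatbLambda$, $\checkbLambda$, it suffices to prove the complexity bound for a single pure tensor $\bigotimes_i \cA_k^{(i)}$.

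Next I would evaluate such a tensor product as $n+1$ successive univariate sweeps: for the $i$-th sweep, applying $\cA_k^{(i)}$ in the $i$-th coordinate while the remaining coordinates are held fixed. For each fixed tuple $(\mu_0,\ldots,\mu_{i-1},\mu_{i+1},\ldots,\mu_n)$, the index set (\ref{eq:tree_from_multitree}) is, by the multitree hypothesis on $\hatbLambda$, either empty or a univariate tree $\Lambda^{(i)} \subset \cJ^{(i)}$, and the analogous slice of $\checkbLambda$ is a tree as well. The key univariate fact, which I would invoke from the underlying AWGM machinery, is that for a uniformly local, piecewise polynomial wavelet basis, a single application of a fixed-order differential operator on a tree-structured index set can be carried out in $\cO(\#\text{tree})$ operations, by traversing the tree top-down (or bottom-up) and exploiting the local supports together with the fact that at each level only $\cO(1)$ neighbors can overlap any given wavelet.

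Summing the per-slice cost over all fixed tuples of the other coordinates shows that the $i$-th sweep costs $\cO(\# \hatbLambda_i + \# \checkbLambda_i)$, where $\hatbLambda_i$, $\checkbLambda_i$ are the intermediate multitree index sets carried between sweeps. To close the argument I would verify that (a) each sweep preserves the multitree property so that the next sweep is again applicable, and (b) the intermediate sets have cardinality bounded by a constant multiple of $\#\hatbLambda + \#\checkbLambda$, so that no blowup occurs along the way. Summing the $n+1$ sweeps and the $K$ tensor components then gives the claimed bound $\cO(\#\hatbLambda + \#\checkbLambda)$.

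The main obstacle I expect is the intermediate-blowup step: after a partial sequence of sweeps one has a hybrid object whose index set is neither $\hatbLambda$ nor $\checkbLambda$ but a mixture, and one must argue carefully that its cardinality is controlled by the multitree structure of the initial and final sets. This is really where the multitree hypothesis, as opposed to a mere tree hypothesis per dimension, is used: it guarantees that the intermediate slices remain trees in each direction, so that the extensions needed to accommodate the operator's local stencil increase the size only by an absolute constant factor per sweep.
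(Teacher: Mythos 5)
The paper does not prove this statement at all: Theorem \ref{thm:mv_mt} is imported verbatim from \cite[Theorem 3.1]{Kestler:2012c}, so there is no in-paper proof to compare against. Measured against the actual argument in that reference, your sketch has the right overall architecture (decompose $\cA$ into a finite sum of tensor products of univariate operators, then evaluate each pure tensor term by coordinate-wise sweeps, using the tree structure of the slices \eqref{eq:tree_from_multitree}), but it contains a genuine gap exactly at the point you flag as ``the main obstacle,'' and your proposed resolution of that obstacle is not correct as stated.

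The problem is step (b), the cardinality bound on the intermediate index sets. After the first sweep the data are indexed by hybrid tuples $(\mu_0,\lambda_1,\dots,\lambda_n)$ with $\mu_0$ a test index and $\lambda_i$ trial indices, and the multitree property of $\hatbLambda$ and $\checkbLambda$ alone does \emph{not} bound the number of such tuples by $C(\#\hatbLambda+\#\checkbLambda)$: already for $n=1$, if $\checkbLambda$ pairs fine levels in coordinate $0$ with coarse levels in coordinate $1$ while $\hatbLambda$ does the opposite, the set of required hybrid pairs can be of product (quadratic) size even though every slice is a perfectly good tree. The missing ingredient is the splitting of each univariate factor into a lower and an upper part with respect to the wavelet level (the adaptive analogue of the unidirectional principle for sparse grids), expanding the tensor product accordingly and choosing, for each of the resulting terms, a sweep ordering for which the level restriction $|\mu_i|\lesssim|\lambda_i|$ (or its reverse) combines with the tree property of the slices to yield intermediate sets of size $\cO(\#\hatbLambda+\#\checkbLambda)$. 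A second, smaller issue: your justification of the univariate lemma via ``only $\cO(1)$ neighbors per level'' does not by itself give linear complexity, since a coarse wavelet overlaps wavelets on \emph{all} finer levels, so the restricted matrix can have far more than $\cO(\#\Lambda^{(i)})$ nonzero entries; the linear cost comes from passing to local single-scale (piecewise polynomial) representations on the tree and evaluating recursively, which is where the assumption of polynomial coefficients and uniformly local, piecewise polynomial wavelets is actually used.
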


Moreover, we obtain the following approximation result for the residual:
\begin{theorem}[{\cite{Kestler:2012d}}] \label{thm:primal_res}
	Let $0< \omega <1$, {let} $\cA$ be a differential operator with polynomial coefficients and {let} $\bx \in \cAmtree^s$ for some $s>0$.
	Then,  for all finite multitrees $\hatbLambda \subset \hatbcJ$ and {all} $\bw_{\hatbLambda} \in \ell_2(\hatbLambda)$, there exists a multitree $\checkbXi = \checkbXi(\hatbLambda,\omega) \subset \checkbcJ$ such that for $\br := \bb_{\checkbXi} - {}_{\checkbXi} \bA_{\hatbLambda}\, \bw_{\hatbLambda}$ it holds that $\# \checkbXi \le C\, \# \hatbLambda + \| \br \|_{\ell_2(\checkbcJ)}^{-1/s}$  and
\begin{equation} \label{eq:error_estim_primal_res}
    \| (\bb - \bA \bw_{\hatbLambda}) - \br \|_{\ell_2(\checkbcJ)} \leq \omega \| \br \|_{\ell_2(\checkbcJ)}.
\end{equation}
\end{theorem}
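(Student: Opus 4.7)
The residual identity $\bb - \bA\bw_{\hatbLambda} = \bA(\bx - \bw_{\hatbLambda})$, which follows from $\bb = \bA\bx$, is the starting point. Because $\bx \in \cAmtree^s$ and $\bw_{\hatbLambda}$ is multitree-supported on $\hatbLambda$, the difference $\bz := \bx - \bw_{\hatbLambda}$ still admits best multitree $\cN$-term approximations converging at rate $s$, so the construction of $\checkbXi$ reduces to approximating $\bA\bz$ by a multitree-supported vector with controlled cardinality.

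The plan is a two-stage approximation. First, approximate $\bz$ by $\bz_\cN$ supported on a multitree $\hatbLambda' \supset \hatbLambda$ with $\#\hatbLambda' \le C(\#\hatbLambda + \cN)$ and $\| \bz - \bz_\cN \|_{\ell_2(\hatbcJ)} \le C\,\cN^{-s}$; this is precisely what the constrained approximation class $\cAmtree^s$ guarantees. Second, exploit the compressibility of $\bA$ on the wavelet basis (which holds because $\cA$ has polynomial coefficients, yielding quasi-sparse off-diagonal structure of the stiffness matrix) together with the multitree matrix--vector algorithm of Theorem~\ref{thm:mv_mt}. This yields an output multitree $\checkbXi$, with $\#\checkbXi \le C\,\#\hatbLambda'$, and a prescribed APPLY-type tolerance $\eta$ such that
\[
   \| \bA \bz_\cN - {}_{\checkbXi}\bA_{\hatbLambda'}\bz_\cN \|_{\ell_2(\checkbcJ)} \le \eta.
\]
Setting $\br := \bb_{\checkbXi} - {}_{\checkbXi}\bA_{\hatbLambda}\bw_{\hatbLambda}$ and using the triangle inequality gives
\[
   \| (\bb - \bA\bw_{\hatbLambda}) - \br \|_{\ell_2(\checkbcJ)}
   \le \| \bA \| \cdot \| \bz - \bz_\cN \|_{\ell_2(\hatbcJ)} + \eta
   \le C'\, \cN^{-s} + \eta.
\]

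The final step is to balance both contributions against $\omega\,\| \br \|_{\ell_2(\checkbcJ)}$: choose $\cN$ and $\eta$ so that each term on the right is bounded by $(\omega/2)\,\| \br \|_{\ell_2(\checkbcJ)}$. Since $\| \br \|_{\ell_2(\checkbcJ)}$ differs from $\| \bb - \bA\bw_{\hatbLambda} \|_{\ell_2(\checkbcJ)}$ by a factor at most $1 \pm \omega$, a short bootstrap fixes $\cN \sim \| \br \|_{\ell_2(\checkbcJ)}^{-1/s}$ and delivers the cardinality estimate $\#\checkbXi \le C\,\#\hatbLambda + \| \br \|_{\ell_2(\checkbcJ)}^{-1/s}$, with the $\omega$-, $\|\bA\|$- and $\|\bx\|_{\cAmtree^s}$-dependent factors absorbed into $C$. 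The principal technical obstacle is that a naive best $\cN$-term truncation is generally not supported on a multitree, so the quasi-optimal rate together with the multitree structure required by Theorem~\ref{thm:mv_mt} can be secured only by operating inside $\cAmtree^s$ and using the multitree-preserving residual approximation routine of \cite{Kestler:2012d,Kestler:2012c}; everything else is bookkeeping around the two-stage estimate above.
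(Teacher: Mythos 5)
The paper does not actually prove Theorem \ref{thm:primal_res}: it is imported verbatim from \cite{Kestler:2012d} and used as a black box, so there is no internal proof to compare against. Judged on its own terms, your sketch follows the correct overall strategy of that reference --- write $\bb-\bA\bw_{\hatbLambda}=\bA(\bx-\bw_{\hatbLambda})$, approximate $\bz=\bx-\bw_{\hatbLambda}$ on an enlarged multitree $\hatbLambda'$ using $\bx\in\cAmtree^s$ (the union of two multitrees is again a multitree, so this stage is sound), control the mass of $\bA\bz_{\cN}$ escaping $\checkbXi$, and balance $\cN$ against $\norm{\br}_{\ell_2(\checkbcJ)}$. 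The first stage, the triangle inequality, and the final bootstrap are all fine bookkeeping.

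The gap is your second stage, which is exactly where the content of the theorem lives. You assert that ``compressibility of $\bA$ together with the multitree matrix--vector algorithm of Theorem \ref{thm:mv_mt}'' produces a multitree $\checkbXi$ with $\#\checkbXi\le C\,\#\hatbLambda'$ and $\norm{\bA\bz_{\cN}-{}_{\checkbXi}\bA_{\hatbLambda'}\bz_{\cN}}_{\ell_2(\checkbcJ)}\le\eta$ for a prescribed $\eta$. First, Theorem \ref{thm:mv_mt} only bounds the \emph{cost} of evaluating ${}_{\checkbLambda}\bA_{\hatbLambda}\bv_{\hatbLambda}$ once both multitrees are given; it offers no mechanism for constructing $\checkbXi$ and no bound on the part of $\bA\bv_{\hatbLambda}$ lying outside it. Second, you cannot simultaneously make $\eta$ arbitrarily small and keep $\#\checkbXi\le C\,\#\hatbLambda'$ with $C$ independent of $\eta$; what is actually achievable with proportional cardinality is a fixed \emph{relative} tolerance, and the constant must then be traced through $\omega$ and the condition number of $\bA$. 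Most importantly, in the tensor-product setting a differential operator is not ``local'' across coordinate directions: tensor wavelets with wildly different levels in different coordinates still interact, so the isotropic argument of expanding the support by a few levels does not apply, and showing that the relevant part of $\bA\bz_{\cN}$ is nevertheless captured by a \emph{multitree} of cardinality $\lesssim\#\hatbLambda'$ is precisely the technical core of \cite{Kestler:2012d}. Your sketch defers this point back to ``the multitree-preserving residual approximation routine'' of that same reference, which is circular, since the existence of that routine is what the theorem asserts. To close the gap you would need the explicit level-dependent construction of $\checkbXi(\hatbLambda,\omega)$ from $\hatbLambda\cup\supp\bx_{\cN}$ together with the decay estimates for entries of $\bA$ between tensor wavelets that justify the tail bound.
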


Thus, the computational cost for the residual approximation is of the order $\cO(\# \hatbLambda + \| \br \|_{\ell_2(\checkbcJ)}^{-1/s})$ if the right hand side coefficients $\bb_{\checkbXi}$ can be computed efficiently.  Explicit constructions of $\checkbXi$ are discussed in \cite{Kestler:2012d} and \cite{MT-LS-AWGM}, where the multitree-based AWGM is extended to the normal equations. In particular, such AWGM satisfies the conditions posed for the routine \textbf{SOLVE} in Section \ref{sec:adaptiverbm}. We used AWGM for all adaptive computations (snapshots, supremizers, error estimates).

\subsection{Wavelet-based adaptive residual RB-error estimate}\label{Sec:WavEst}
Recall from \eqref{Eq:Delta-eps-N} the definition of the error estimator,
$$
	\Delta_N^\eps(\mu)
	= \frac{R^\eps_{b,N}(\mu)}{\beta(\mu)}
	= \frac{\| r^\eps_{b,N}(\mu)\|_{\cY'}}{\beta(\mu)}
	= \frac{\| f(\mu) - \cB(\mu)\, u_N^\eps(\mu)\|_{\cY'}}{\beta(\mu)},
$$
where $f(\mu) := f(\cdot;\mu)\in\cY'$ and $\cB(\mu):\cX\to\cY'$ is defined as in \S \ref{Sec:2.1}. If we assume that an efficiently computable lower bound $0<\beta_{\text{LB}}(\mu) \le \beta(\mu)$ for the inf-sup-constant is available (e.g., by the Successive Constraint Method -- SCM --, see \cite{MR2367928}), we are left with the problem of approximating $R_{b,N}^\eps(\mu)$, the dual norm of the residual.

Let us now show how this can be done in an online-efficient manner using the online-offline decomposition combined with the wavelet expansion. Using \eqref{eq:affinedecomp} yields
\begin{align}
	r_{b, N}^\eps(\mu)
	= f(\mu) - \cB(\mu)\, u_N^\eps(\mu) 
	&= \sum_{q=1}^{Q_f} \theta_f^{(q)}(\mu) f^{(q)} - \sum_{q=1}^{Q_b} \theta_b^{(q)}(\mu)\, B^{(q)} u_N^\eps(\mu) 
		\nonumber\\
	&= \sum_{q=1}^{Q_f} \theta_f^{(q)}(\mu) f^{(q)} 
		- \sum_{i=1}^N \sum_{q=1}^{Q_b} u_i^N(\mu) \theta_b^{(q)}(\mu)\, B^{(q)} \zeta_i^\eps, 
	\label{eq:res-affine}
\end{align}
with $\zeta_i^\eps$ defined in \eqref{eq:AdaptiveRBSpace} and $B^{(q)}:\cX\to\cY'$ defined by $\langle B^{(q)} w, v\rangle_{\cY'\times\cY} := b^{(q)}(w,v)$, $w\in\cX$, $v\in\cY$.

Next, recall from \eqref{eq:Psis_cXcY} that $\checkbPsi^{\cY} = \big\{ \checkbpsi^{\cY}_{\blambda} : \blambda \in \checkbcJ \big\}$ is a Riesz basis for $\cY$. Then, from the Riesz representation theorem, it is well-known that a dual wavelet system $\tilde\checkbPsi^{\cY}= \big\{ \tilde{\checkbpsi}^{\cY}_{\blambda} : \blambda \in \checkbcJ \big\}$ exists which is a Riesz basis for the dual space $\cY'$. Let $g\in\cY'$, then this element has a unique expansion in the dual wavelet basis, i.e.,
$$
	g = \sum_{\blambda \in \checkbcJ} g_{\blambda} \tilde{\checkbpsi}^{\cY}_{\blambda},
	\qquad
	\bg := (g_{\blambda})_{\blambda \in \checkbcJ},
	\quad
	g_{\blambda} = \langle g, \checkbpsi^{\cY}_{\blambda} \rangle_{\cY'\times\cY}.
$$
In particular, the wavelet coefficients $g_{\blambda}$ are computed by the dual pairing of $g$ with the \emph{primal} wavelets, which are often piecewise polynomials, so that the arising integrals can efficiently be computed at any desired accuracy.

The Riesz basis property implies the existence of constants $0< c_\Psi\le C_\Psi<\infty$ such that for all $g\in\cY'$ it holds
\begin{equation}\label{eq:dualnorm-eq}
	c_\Psi\, \| g\|_{\cY'}
	\leq \Big(\sum_{\blambda \in \checkbcJ} |g_{\blambda}|^2\Big)^{1/2} 
	= \| \bg\|_{\ell_2(\checkbcJ)}
	\le C_\Psi\, \| g\|_{\cY'},
\end{equation}
where the equivalence constants $c_\Psi$ and $C_\Psi$ depend only on the choice of $\checkbPsi^{\cY}$. Putting \eqref{eq:res-affine} and \eqref{eq:dualnorm-eq} together yields
\begin{align*}
	\| r_{b,N}^\eps(\mu)\|_{\cY'}^2
	&\le c_\Psi^{-1} \sum_{\blambda \in \checkbcJ} 
		\big( \langle f(\mu) - \cB(\mu)\, u_N^\eps(\mu), \checkbpsi^{\cY}_{\blambda} \rangle_{\cY'\times\cY}\big)^2 \\
	&\kern-35pt	
		= c_\Psi^{-1} \sum_{\blambda \in \checkbcJ} 
		\Big(
		\sum_{q=1}^{Q_f} \theta_f^{(q)}(\mu) \langle f^{(q)} , \checkbpsi^{\cY}_{\blambda} \rangle_{\cY'\times\cY}
		- \sum_{i=1}^N \sum_{q=1}^{Q_b} u_i^N(\mu) \theta_b^{(q)}(\mu)\, b^{(q)} (\zeta_i^\eps, \checkbpsi^{\cY}_{\blambda})
		\Big)^2 \\
	&\kern-35pt	
		= c_\Psi^{-1} \sum_{q, q'=1}^{Q_f} 
			\theta_f^{(q)}(\mu) \theta_f^{(q')}(\mu)
				\sum_{\blambda \in \checkbcJ} 
					\langle f^{(q)} , \checkbpsi^{\cY}_{\blambda} \rangle_{\cY'\times\cY}\,
					\langle f^{(q')} , \checkbpsi^{\cY}_{\blambda} \rangle_{\cY'\times\cY} \\
	&\kern-30pt	
		+ c_\Psi^{-1} \sum_{q, q'=1}^{Q_b} \sum_{i,j=1}^N
			\theta_b^{(q)}(\mu) \theta_b^{(q')}(\mu)\,
			u_i^N(\mu)\, u_j^N(\mu) 
				\sum_{\blambda \in \checkbcJ} 
					b^{(q)} (\zeta_i^\eps, \checkbpsi^{\cY}_{\blambda})\, b^{(q')} (\zeta_j^\eps, \checkbpsi^{\cY}_{\blambda})\\
	&\kern-30pt	
		- 2 c_\Psi^{-1}  \sum_{q=1}^{Q_f} \sum_{q'=1}^{Q_b} \sum_{j=1}^N
			\theta_f^{(q)}(\mu)\, \theta_b^{(q')}(\mu)\, u_j^N(\mu) 
				\sum_{\blambda \in \checkbcJ} 
					\langle f^{(q)} , \checkbpsi^{\cY}_{\blambda} \rangle_{\cY'\times\cY}\,
					b^{(q')} (\zeta_j^\eps, \checkbpsi^{\cY}_{\blambda}) \\
	&\kern-35pt	
		= c_\Psi^{-1} \sum_{q, q'=1}^{Q_f} 
			\theta_f^{(q)}(\mu) \theta_f^{(q')}(\mu)
				C^{f,f}_{q,q'} \\
	&\kern-30pt	
		+ c_\Psi^{-1} \sum_{q, q'=1}^{Q_b} \sum_{i,j=1}^N
			\theta_b^{(q)}(\mu) \theta_b^{(q')}(\mu)\,
			u_i^N(\mu)\, u_j^N(\mu) 
				C^{b,b}_{(i,q),(j,q')}\\
	&\kern-30pt	
		- 2 c_\Psi^{-1}  \sum_{q=1}^{Q_f} \sum_{q'=1}^{Q_b} \sum_{j=1}^N
			\theta_f^{(q)}(\mu)\, \theta_b^{(q')}(\mu)\, u_j^N(\mu) 
				C^{f,b}_{q,(j,q')},
\end{align*}
where the terms
\begin{align*}
	C^{f,f}_{q,q'} 
		&:= 
				\sum_{\blambda \in \checkbcJ} 
					\langle f^{(q)} , \checkbpsi^{\cY}_{\blambda} \rangle_{\cY'\times\cY}\,
					\langle f^{(q')} , \checkbpsi^{\cY}_{\blambda} \rangle_{\cY'\times\cY}, \\
	C^{b,b}_{(i,q),(j,q')}
		&:=
				\sum_{\blambda \in \checkbcJ} 
					b^{(q)} (\zeta_i^\eps, \checkbpsi^{\cY}_{\blambda})\, b^{(q')} (\zeta_j^\eps, \checkbpsi^{\cY}_{\blambda}), \\
	C^{f,b}_{q,(j,q')}
		&:= 
				\sum_{\blambda \in \checkbcJ} 
					\langle f^{(q)} , \checkbpsi^{\cY}_{\blambda} \rangle_{\cY'\times\cY}\,
					b^{(q')} (\zeta_j^\eps, \checkbpsi^{\cY}_{\blambda}) 
\end{align*}
can be computed offline in principle exactly -- or at least up to any desirable accuracy, which can be seen as follows: in principle, the index set $\checkbcJ$ has infinitely many elements, so that all three sums have infinitely many terms. However,
\begin{compactitem}
	\item $f^{(q)}$, $1\le q\le Q_f$, are given elements in $\cY'$. Either they have a finite wavelet expansion (and then both $C^{f,f}_{q,q'}$ and $C^{f,b}_{q,(j,q')}$ are finite sums) or at least the sequence of wavelet coefficients decay with respect to the level (the $\cY'$-norm is finite and the sum has to converge). In this case, both $C^{f,f}_{q,q'}$ and $C^{f,b}_{q,(j,q')}$ can be truncated and the desired accuracy triggers the number of terms in this offline computation;
	\item $b^{(q)}(\zeta_i^\eps, \cdot)$, $1\le q\le Q_b$, $1\le i\le N$, are also given functionals in $\cY'$, so that the same reasoning as above applies for the sum in $C^{b,b}_{(i,q),(j,q')}$.
\end{compactitem}
The number of terms in these expansions as well as their localization have a strong influence on the decision if an adaptive snapshot computation is indeed required or if, e.g., an adaptively generated common truth as in \cite{Masa:New} might be sufficient. Details concerning the decay of wavelet coefficients can be found in \cite{CDD01,Stevenson:2009,Urban:WaveletBook}.

In summary, the online complexity is $\cO(Q_f^2+Q_f\, Q_b\, N + Q_b^2\, N^2)$, i.e., the surrogate for the error estimator can be computed online-efficient.

\section{{Numerical Experiments}} \label{sec:numerics}

In this section, we present numerical results showing quantitative effects of an adaptive offline computation of the snapshots as well as of the adaptive wavelet computation of the dual norm of the residual. We recall that meaningful test cases have to be strongly parameter-dependent, so that the presented results need to be properly interpreted. As numerical examples, we consider an elliptic problem as well as a parabolic (time-periodic) one in space-time formulation.

\subsubsection*{Realization of the Error Estimator}
As we have seen in Section \ref{Sec:WavEst}, the terms $b^{(q')} (\zeta_j^\eps, \checkbpsi^{\cY}_{\blambda})$ and $\langle f^{(q)} , \checkbpsi^{\cY}_{\blambda} \rangle_{\cY'\times\cY}$ need to be computed in the offline stage. If the data do not allow for a finite wavelet expansion (which is the case in our example), the corresponding wavelet expansions need to be truncated. The corresponding error can be controlled by the size of the wavelet coefficients, e.g., \cite{Kestler:Diss,Urban:WaveletBook}. Next, the (in principle) infinite sums in $C^{f,f}_{q,q'}$, $C^{b,b}_{(i,q),(j,q')}$ and $C^{f,b}_{q,(j,q')}$ in Section \ref{Sec:WavEst} have to be computed, which can be done at any desired accuracy offline due to the decay of the wavelet coefficients w.r.t.\ the level. Of course, the terms $b^{(q')} (\zeta_j^\eps, \checkbpsi^{\cY}_{\blambda})$ have to be computed after the adaptive computation of the corresponding snapshot. For our experiments, we have chosen a sufficiently high maximal level for both the error estimator and the exact problem.

Finally, we need the constants $c_\Psi$ and $C_\Psi$ in \eqref{eq:dualnorm-eq}, namely the Riesz constants of the wavelet basis. These numbers can either be taken from the literature or by determining smallest and largest eigenvalues of the dual mass operator $(\tilde\checkbPsi^{\cY}, \tilde\checkbPsi^{\cY})_\cY$.

\subsection{An Elliptic Equation: A Thermal Block with a Seal}\label{Sec:Ex1}

\subsubsection{Data}
We consider heat conduction in a 2D thermal block $\Omega=(0,1)^{2}$ consisting of two subdomains $\Omega_{0}=[0.5,1]\times [0,1]$, $\Omega_{1}=[0,0.5]\times [0,1]$, with different conductivities $\mu_{0}=1$, $\mu_{1} \in [0.01,100]$, \cite{RozzaRBMIntro}. The heat influx is modeled as a constant local source on different parts $\widetilde \Omega_{i}$, $i=1,\dots,9$, of the domain, where the current location depends on a (discrete) parameter $\mu_{2}\in\{ 1,\ldots ,9\}$, see Figure \ref{Fig:Block}.  We impose homogeneous Dirichlet boundary conditions on $\Gamma_{D}:= \partial\Omega \cap \{ x = 0 \vee x = 1\}$ and homogeneous Neumann conditions on $\Gamma_{N}:= \partial\Omega \cap \{y=0 \vee y=1\}$. The variational formulation then reads: find $u \in \cX := H^1_D(\Omega) = \{ v \in H^{1}(\Omega) : v = 0 \text{ on } \Gamma_{D}\}$ such that 
\begin{equation*}
	 \int_{\Omega_{0}} \nabla u \cdot \nabla v + \mu_{1} \int_{\Omega_{1}} \nabla u \cdot \nabla v = \left(f(\mu_{2}), v\right)_{L_2(\Omega)} \,		 
	 	\forall\, v \in \cX,
	 \quad
	 f(\mu_{2}) := {\sum_{i=1}^{9}} \delta_{\mu_2,i} \ind{\widetilde\Omega_{i}}.
\end{equation*}
\begin{wrapfigure}{l}{4.3cm}			
	\begin{tikzpicture}[scale=0.9]
        			\draw[thick] (0,0) grid[xstep=1.5, ystep=3] (3,3);
        		    	\node[color=gray] at (1.10,1.2) {\huge$\Omega_{1}$};
        		    	\node[color=gray]  at (2.10,1.2) {\huge$\Omega_{0}$};
        		    	\draw[thin, dashed] (0,0) grid[xstep=1,ystep=1.2] (3,3);
        		    	\node at (0.5, 0.6) {$\widetilde\Omega_{1}$};
        		    	\node at (0.5, 1.65) {$\widetilde\Omega_{2}$};
        		    	\node at (0.5, 2.7) {$\widetilde\Omega_{3}$};
        		    	\node at (1.5, 0.6) {$\widetilde\Omega_{4}$};
        		    	\node at (1.5, 1.65) {$\widetilde\Omega_{5}$};
        		    	\node at (1.5, 2.7) {$\widetilde\Omega_{6}$};
        		    	\node at (2.5, 0.6) {$\widetilde\Omega_{7}$};
        		    	\node at (2.5, 1.65) {$\widetilde\Omega_{8}$};
        		    	\node at (2.5, 2.7) {$\widetilde\Omega_{9}$};
        		    	\node[color=gray] at (1.0, -0.3) {{\tiny$\frac13$}};
        		    	\node[color=gray] at (2.0, -0.3) {{\tiny$\frac23$}};
        		    	\node[color=gray] at (-0.15, 1.2) {{\tiny$\frac25$}};
        		    	\node[color=gray] at (-0.15, 2.4) {{\tiny$\frac45$}};
        		    	\draw[thick] (0,0) -- (3,0);
        		    	\draw[thick] (0,3) -- (3,3);
        		    	\draw[dashed, thick] (0,0) -- (0,3);
        		    	\draw[dashed, thick] (3,0) -- (3,3);
        			\draw[thick] (0,0) -- (3,0) node[midway, below]{$\Gamma_{N}$};
        			\draw[thick] (0,0) -- (0,3) node[midway, left]{$\Gamma_{D}$};
        			\draw[thick] (0,3) -- (3,3) node[midway, above]{$\Gamma_{N}$};
        			\draw[thick] (3,0) -- (3,3) node[midway, right]{$\Gamma_{D}$};
			\end{tikzpicture}
		\caption{\label{Fig:Block}%
		Thermal block with $9$ local sources.
		}
\end{wrapfigure}

\subsubsection{Wavelet Discretization}
We employ a multi\-tree-based AWGM (see Section \ref{sec:awgm}) with a tensor basis consisting of bi-orthogonal B-spline wavelets from \cite{Dijkema:Diss} of order $d_{x}= m_{x}=2$ (for the meaning of the parameters $d$ and $m$, see Section \ref{sec:awgm}) and $L_{2}(0,1)$-orthonormal (multi-)wavelets as in \cite{Rupp:Diss} of order $d_{y}=m_y=2$, with homogeneous boundary conditions. In order to show that adaptive discretizations are beneficial in this case, we indicate in Figure \ref{fig:support} the support centers of the \emph{active} wavelets, i.e., those that are chosen by the adaptive scheme \textbf{SOLVE}. We see strong local refinements depending on the choice of the parameter, so that an adaptive discretization is obviously useful in this example.
\begin{figure}[!htb]
    \centering
    \begin{subfigure}{0.32\textwidth}
        \includegraphics[width=\textwidth]{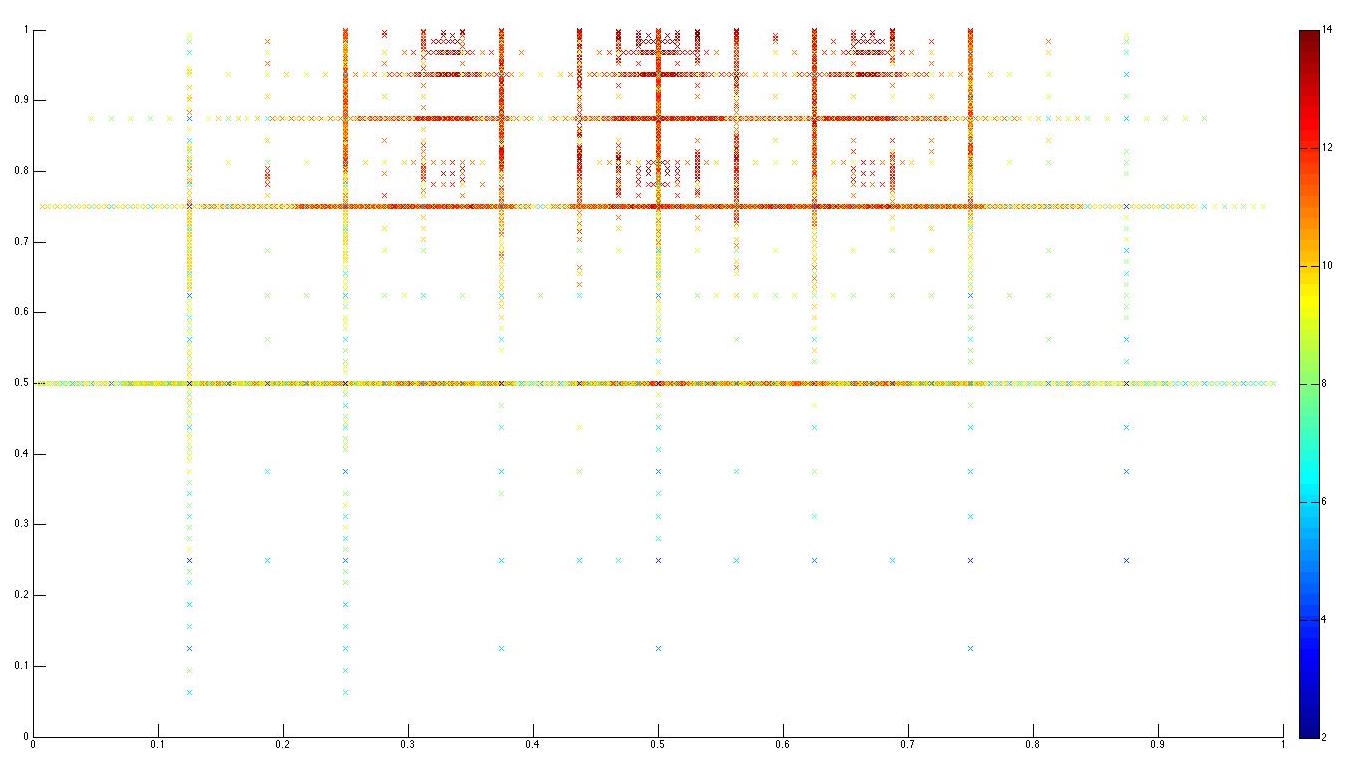}
        \caption{$\mu_1=10$, $\mu_2=6$}
        \label{fig:sup_mu6}
    \end{subfigure}
    \begin{subfigure}{0.32\textwidth}
        \includegraphics[width=\textwidth]{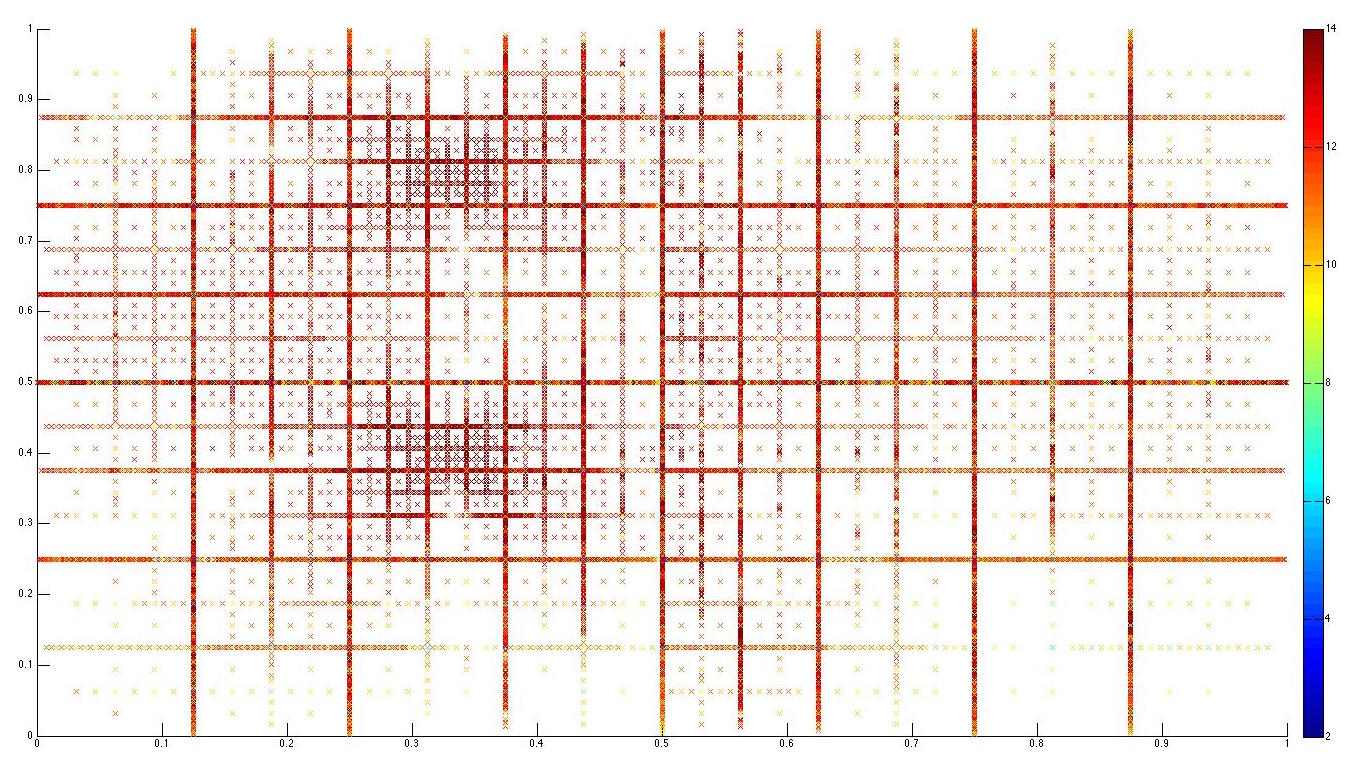}
        \caption{$\mu_1=0.0269$, $\mu_2=2$}
    \end{subfigure}
    \begin{subfigure}{0.32\textwidth}
        \includegraphics[width=\textwidth]{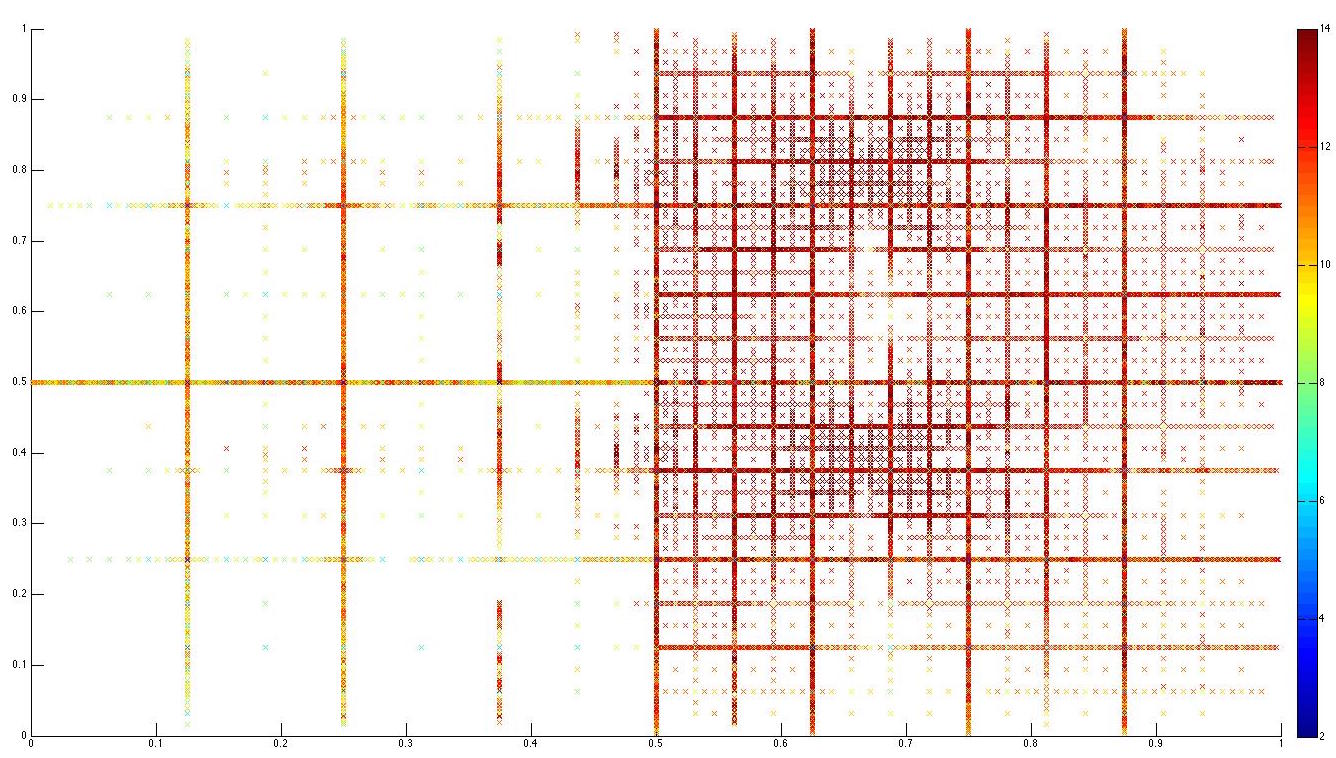}
        \caption{$\mu_1=0.01$, $\mu_2=8$}
        \label{fig:sup_mu8}
    \end{subfigure}
    \caption{Support centers of local wavelets of snapshots
             of the elliptic thermal block problem.
             The color bar indicates the size of the corresponding wavelet coefficients.}
    \label{fig:support}
\end{figure}

\subsubsection{Greedy Performance}
The training set $\cD_\text{train}$ consists of $20$ log-spaced values in $\cD_1=[0.01, 10]$ for $\mu_1$ and all $9$ possible values for $\mu_2$. We set the Greedy tolerance $\widetilde{\text{tol}} := 10^{-4}$. Table \ref{Tab:Seal} shows the decay of the error estimator w.r.t.\ to the number of basis functions. To achieve the specified tolerance, $24$ snapshots are required. We also indicate the chosen snapshots in Table \ref{Tab:Seal}.

\begin{figure}[!htb]
	\begin{minipage}{0.48\textwidth}
    	\begin{subfigure}{0.98\textwidth}
	    	\begin{center}
	        \footnotesize
			\begin{tabular}{|c|r|r|}\hline
			    $N$ & Snapshot & $\frac{\| \br_a(\bu_N^\eps(\mu^i); \mu^i)\|_
			                     {\ell_2}}{\| \br_a(\bu^\eps(\mu^i); \mu^i)\|_{\ell_2}}$
			                     \\ \hline 
			    1   & $(0.01, 2)$ & 1.0137\\
			    2   & $(0.01, 8)$ & 1.0392\\
			    3   & $(0.01, 5)$ & 1.0256\\
			    4   & $(0.01, 7)$ & 1.0317\\
			    5   & $(0.01, 1)$ & 1.0177\\
			    6   & $(0.01, 4)$ & 1.0342\\
			    7   & $(0.01, 9)$ & 1.0326\\
			    8   & $(0.01, 3)$ & 1.0119\\
			    9   & $(0.01, 6)$ & 1.0298\\
			    10  & $(0.02069, 5)$ & 1.0395\\
			    11  & $(0.02069, 4)$ & 1.0365\\
			    12  & $(0.02069, 6)$ & 1.0307\\
			    13  & $(0.02069, 2)$ & 1.4497\\
			    14  & $(0.02069, 1)$ & 1.3328\\
			    15  & $(0.02069, 3)$ & 1.2525\\
			    16  & $(10, 8)$ & 1.1594\\
			    17  & $(10, 4)$ & 1.0884\\
			    18  & $(10, 7)$ & 1.2069\\
			    19  & $(10, 5)$ & 1.0638\\
			    20  & $(10, 6)$ & 1.0917\\
			    21  & $(10, 9)$ & 1.2531\\
			    22  & $(0.02069, 7)$ & 1.0601\\
			    23  & $(0.02069, 8)$ & 1.0977\\
			    24  & $(0.02069, 9)$ & 1.0539\\\hline
			\end{tabular}
       		\caption{\footnotesize Selected snapshots and
                 residual deterioration rate.}
        			\label{Tab:Seal}
	    		\end{center}
		\end{subfigure}
	\end{minipage}
	\begin{minipage}{0.48\textwidth}
    	\centering
    	\begin{subfigure}{0.98\textwidth}
        \includegraphics[width=\textwidth]{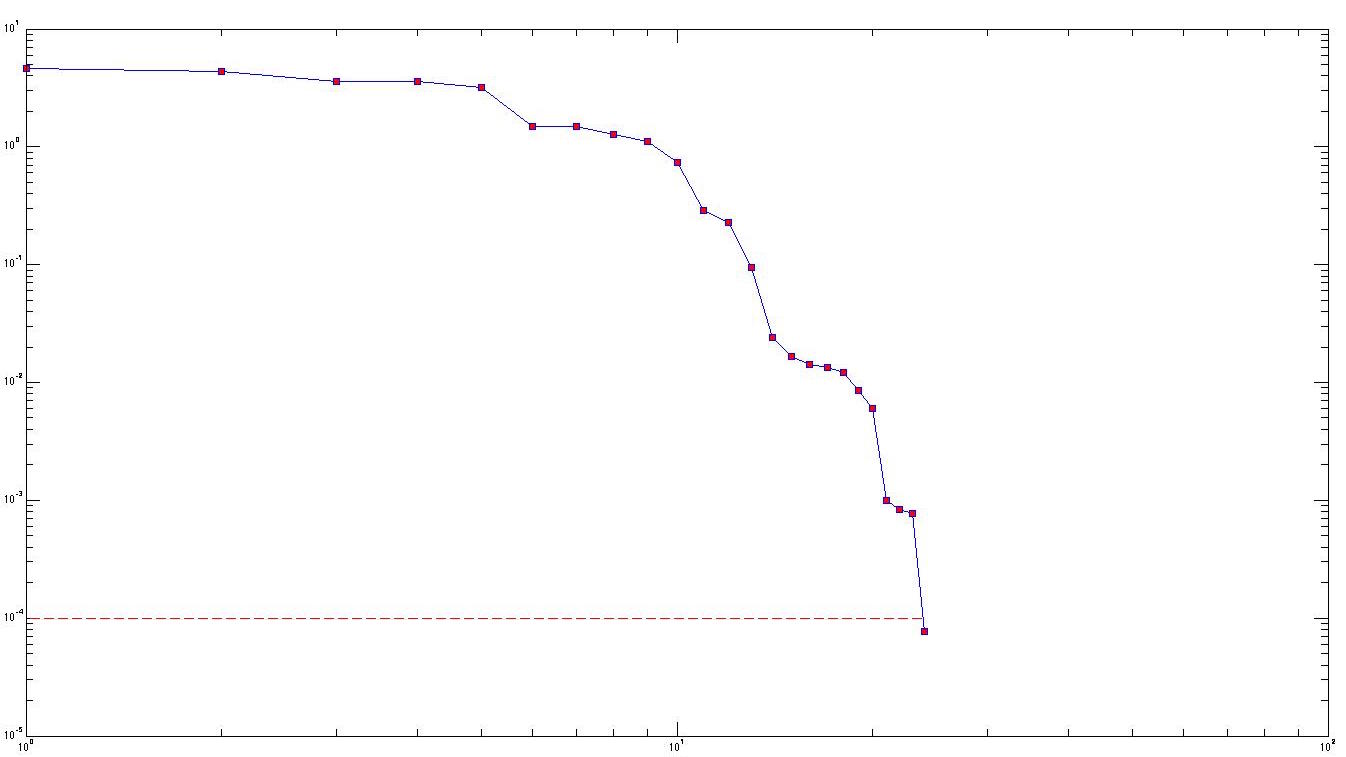}
        \caption{\footnotesize Weak Greedy training starting at $N=0$.}
        \label{fig:greedy_train}
    \end{subfigure}\newline
    \begin{subfigure}{0.98\textwidth}
        \includegraphics[width=\textwidth]{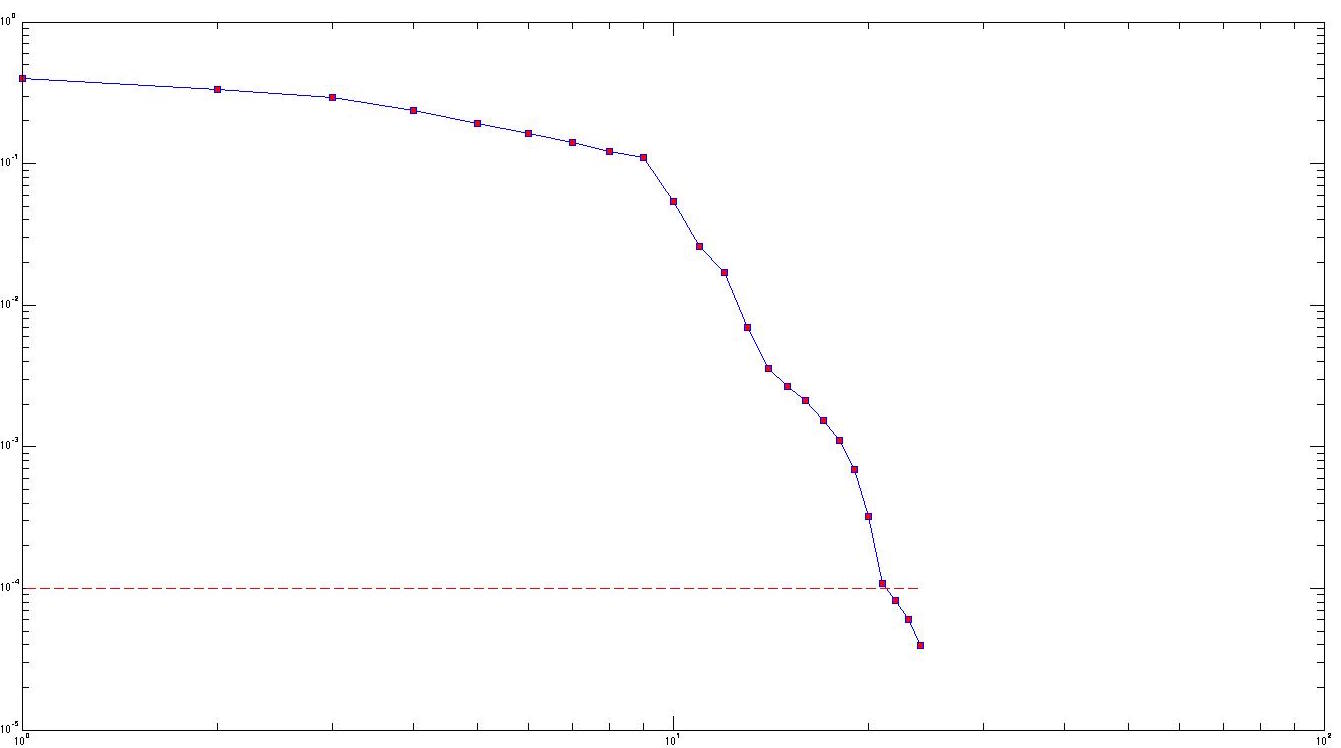}
        \caption{\footnotesize Average error estimator for 450 test parameters
             with $N=1,\ldots,24$ basis functions.}
        \label{fig:test_avgs}
    \end{subfigure}
	\end{minipage}
       \caption{Weak Greedy training for the elliptic thermal block problem.
       	 \label{fig:greedy_train-full}}
\end{figure}

As we see, due to the parameter-dependent locality of the source $f$ (and thus the highly localized solutions), adding a snapshot to the RB-basis for a certain $\mu_1$ does not necessarily add additional information for other parameters. Hence, the Greedy loop iterates through the $9$ values for $\mu_2$ first, i.e., all $9$ possible source locations. For $\mu_1=10^{-2}$, the coercivity constant $\alpha(\mu)$ is quite small and thus has a strong impact in the error estimator. As we see in Figure \ref{fig:greedy_train}, the convergence rate slightly stalls for those samples with the same $\mu_1$-value (i.e, same conductivity but different location of the source), but this effect becomes less and less pronounced for increasing $N$.

In Figure \ref{fig:test_avgs}, we show the average error estimator over a test set, which is chosen as 50 log-spaced values for $\mu_1$ in $[0.01, 20]$ and all 9 possible values for $\mu_2$ ($\#\cD_{\text{test}}=450$). We observe algebraic convergence for the first $9$ samples (corresponding to the different sources) and exponential rate of convergence afterwards. For $N=24$, we obtain a maximal error estimator over the test space of $1.00766\times 10^{-4}$ (which only slightly exceeds $\widetilde{\text{tol}} := 10^{-4}$). This value is attained at $\mu=(20, 6)$, which is outside the range of the training set $\cD_{\text{train}}$.

Let us now comment on the values in the third column of Table \ref{Tab:Seal}, where we indicate the ratio of the residual of the RB-approximation $u_N^\eps(\mu^i)$ and the snapshot $\zeta_i^\eps= u^\eps(\mu^i)$ for the chosen samples $\mu^i\in S_N$. Both residuals are the discrete ones, i.e., the vectors of the wavelet coefficients. These ratios are interesting for different reasons:
\begin{compactenum}
	\item They indicate the size of the reproduction error. Recall, that we cannot expect exact reproduction of snapshots. As the very moderate numbers indicate, RB and snapshot errors are of the same size which is a quite positive result.
	\item We have discussed in Section \ref{Sec:ErrorMeasure} necessary snapshot accuracies to ensure Greedy convergence, in particular the $\mu$-dependence of these accuracies. For this first example, $\cA(\mu)$ is elliptic and we determined $u_N^\eps(\mu)$ as the usual Galerkin solution in $X_N^\eps$ (without computing an approximation of the infinite-dimensional normal equation operator). Hence, from Proposition \ref{prop:a4}, we could expect a factor of $\sqrt{\gamma(\mu) / \alpha(\mu)}$, which is $10$ for $\mu_1=0.01$. As we see, we obtain numbers in the order of $1$, which means that the standard Galerkin RB-solution is very close to the optimal Galerkin solution of the infinite-dimensional normal equations. A possible explanation is that the wavelet preconditioning yields a very small spectrum and quantitatively good condition numbers.
	\item The adaptively computed snapshot $u^\eps(\mu^i)$ can also be interpreted as the full approximation of the RB-approximation $u_N^\eps(\mu^i)$ for the same sample value $\mu^i$. The snapshot, in turn, is guaranteed to be within the prescribed tolerance of the exact solution $u(\mu^i)$. This means that the numbers in the third column of Table \ref{Tab:Seal} are the effectivities of the adaptive wavelet-RB error estimator for the sample values $\mu^i$.
\end{compactenum}

\subsection{Parabolic Periodic Space-Time Equation}\label{Sec:Ex2}
\subsubsection{Data}
As a second example for our numerical experiments, we consider the time-periodic convection-diffusion-reaction (CDR) equation 
\begin{equation*}
	\left\{
	\begin{aligned}
	u_{t} - u_{xx} + \mu_{1} \beta (x) u_{x} + \mu_{2} u &= \cos(2\pi t) \qquad\, \text{ on } \Omega = (0,1),\\
			u(t,0) &= u(t,1) \hspace{1.1cm} \text{ for all } t \in [0,T],\\
			u(0,x) = u(T,x)&= 0 \hspace{2.05cm} \!\text{on } \overline\Omega,
	\end{aligned}\right.
\end{equation*}
with coefficient function $\beta (x)= 0.5-x$. 
Setting $V:=H^1_0(\Omega)$, $H^1_\per(0,T) := \{ v \in H^1(0,T): v(0) = v(T) \}$, we define the spaces $\cY := L_2(0,T; V )$ and $\cX := L_2 (0,T; V ) \cap H^1_{\per}(0,T; V')$, i.e.,
\begin{align}
   \cX &= \{ v \in L_2 (0,T; V ): v_{t} \in L_2 (0,T;V'),\ v(0) = v(T) \text{ in } H\}, \label{eq:cX} 
\end{align}
where $\cX$ is equipped with the norm $\norm{v}^2_{\cX} := \norm{v}_{L_{2}(0,T;V)}^{2} + \norm{v_{t}}_{L_{2}(0,T;V')}^{2}$, $v \in \cX$. Note that $v(0)$, $v(T)$ are well-defined due to $H^{1}(0,T) \subset {C([0,T])}$ and $\{ v \in L_2 (0,T; V ): v_{t} \in L_2 (0,T;V')\} \subset C(0,T;H)$, e.g., \cite{DautrayLions}.  We obtain the variational problem:
\begin{align}\label{eq:problem_weakform}
	 \text{Find } u \in \cX: \qquad b(u,v; \mu) = f(v) \qquad \forall\, v \in \cY,\, \mu=(\mu_1, \mu_2),
\end{align}
with forms $b(\cdot, \cdot; \mu): \cX \times \cY \times \cD \to \R$, $f(\cdot): \cY \to \R$ given by
\begin{align}
  b(u,v; \mu) &:= \int_0^T [\eval{v(t)}{u_{t}(t)}_{V\times V'} + a(u(t),v(t); \mu)] dt, \,
  	\label{eq:bil_form_b}
\end{align}
where $f(v) := \int_0^T \cos(2\pi t) \eval{v(t)}{1}_{V\times V'}  dt$ and the bilinear form as $a(\phi, \eta; \mu) = (\phi_x, \eta_x)_{L_2(\Omega)} + \mu_1 (\beta \phi_x, \eta)_{L_2(\Omega)} + \mu_2 (\phi, \eta)_{L_2(\Omega)}$.

\subsubsection{Discretization}
As  bases we use space-time tensor functions: in time, we use a collection of bi-orthogonal B-spline wavelets on $\R$ of order $d_{t} =  m_{t}=2$, periodized onto $[0,T]$, \cite{Urban:WaveletBook}. The spatial basis is chosen as bi-orthogonal B-spline wavelets of order $d_{x}=m_{x}=2$ with homogeneous boundary conditions from \cite{Dijkema:Diss}. The test basis is  a tensor product of the above mentioned linear B-spline wavelets with $2$ vanishing moments from \cite{Dijkema:Diss} with homogenous boundary conditions in the univariate spatial basis.

In this example, the snapshots have different temporal evolutions. Since time is a `normal' variable in a space-time variational formulation, this means that different discretizations for the snapshots in space-time may pay off. In particular, the right-hand side is smooth, hence we do not expect strong local effects as in the previous example. Different snapshots merely exhibit different temporal evolutions as can be seen in Figure \ref{fig:cdr_sols}. This justifies adaptivity.
\begin{figure}[!htb]
	    \includegraphics[width=\textwidth]{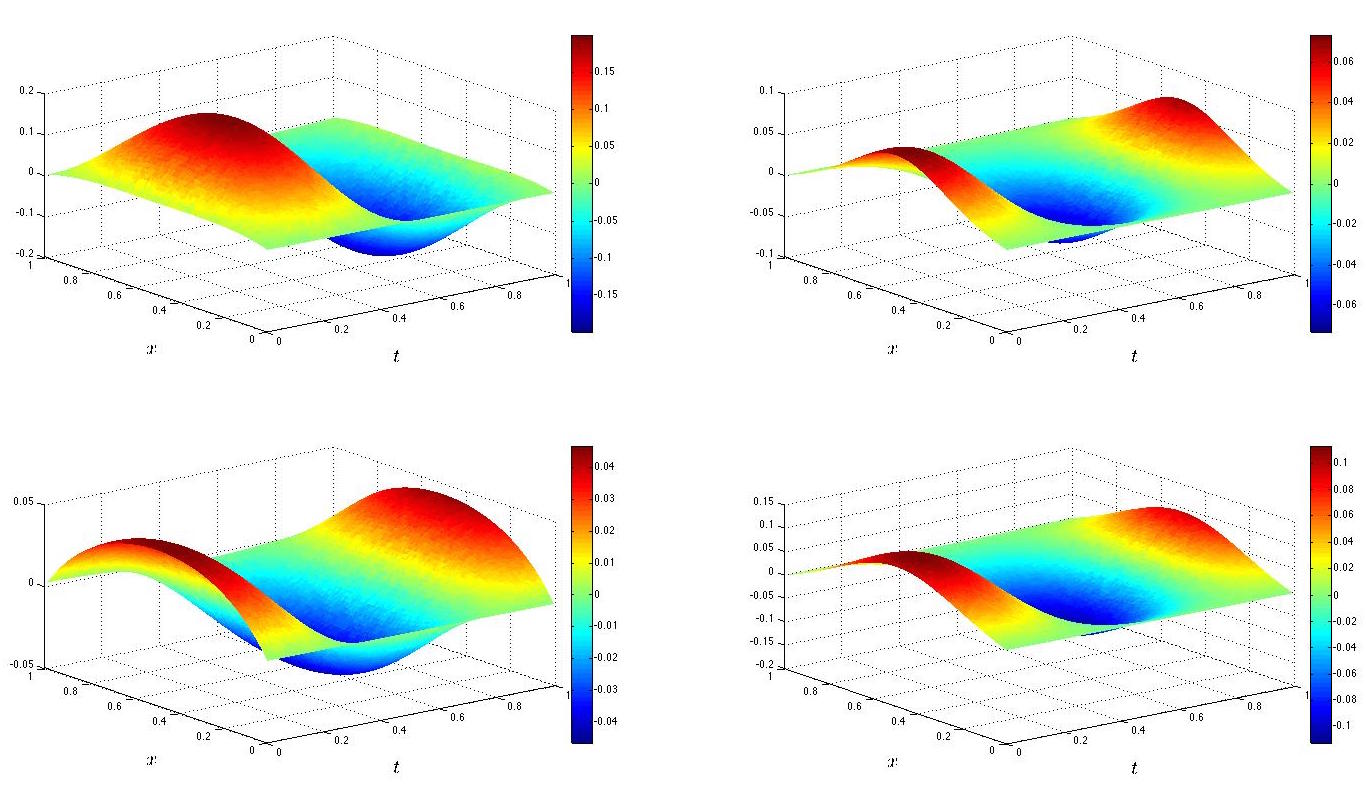}
    		\caption{Snapshots for the CDR problem for
             	$\mu=(0,-9),(30,-9)$ in the first row
             	and $\mu=(0,15),(17.3684,-9)$ in the second.}
    			\label{fig:cdr_sols}
\end{figure}

\subsubsection{Greedy Performance}
We perform the same experiments as in the elliptic thermal block problem for the CDR equation. The training set was chosen as 20 uniformly spaced values in $[0,30]$ for $\mu_1$ and in $[-9,15]$ for $\mu_2$. The Greedy tolerance was set to $\widetilde{\text{tol}}:=10^{-4}$. 

The decay of the error estimator is shown in Figure \ref{fig:greedy_train_CDR}, where we also see that $8$ basis functions suffice to reach the desired tolerance. The chosen snapshots are listed in Table \ref{tab:snapshots_CDR}. The different temporal evolution of snapshots is reflected by the convergence history in Figures \ref{fig:greedy_train_CDR} and \ref{fig:test_avgs_CDR}. In the latter figure, we test the error estimator again on a larger test set, which is here chosen within the same range as the training set, but with a finer uniform discretization of 50 values for $\mu_1$ and $\mu_2$. The maximal error stays below the Greedy tolerance $\widetilde{\text{tol}}=10^{-4}$. We obtain a maximal error of $6.1410\times 10^{-5}$ on the training set and of $6.2397\times 10^{-5}$ on the larger test set, i.e., just a very mild increase. However, this is not surprising at all, since the training set covers the same range as the test set.

As in the elliptic case of the thermal seal-block, the numbers in the third column of Table \ref{tab:snapshots_CDR} are quite positive. In this case, there are also additional features:
\begin{compactenum}
	\item Here, we face a Petrov-Galerkin problem. We have proven above that $Y_N^\eps(\mu):=\cR_\cY'\, \cB(\mu)(X_N^\eps)$ is an optimal test space. In our numerical experiments, however, we used $Y_N^\eps(\mu)\equiv X_N^\eps$ for simplicity. This is possible in this case since $\cX\subset\cY$. The values in Table \ref{tab:snapshots_CDR} show that this simple choice gives quite good results.
	\item At a first glance, the numbers less than $1$ seem surprising. However, recall that the right-hand side is parameter-\emph{independent}. These means that the RB-space $X_N^\eps$ may contain components that improve the RB-approximation over the snapshot, yielding ratios smaller than $1$.
	\item Finally, we obtain an RB-system with only $8$ basis functions. In the online stage, we thus only have to solve one $8\times 8$ linear system for the full evolution -- no temporal iteration (time-stepping) is required. This gives rise to an enormous speedup which allows us to perform 2500 tests on a PC in about 2 seconds!
\end{compactenum}
\begin{figure}[!htb]
	\begin{minipage}{0.48\textwidth}
    \begin{subfigure}{0.98\textwidth}
    	\begin{center}
        \footnotesize
		\begin{tabular}{|c|r|r|}\hline
		    $N$ 
		    & Snapshot 
		    & $\frac{\|\br(\bu_N^\varepsilon(\mu^i);\mu^i)\|_{\ell_2}}{\|\br(\bu^\varepsilon(\mu^i); \mu^i)\|_{\ell_2}}$\\ \hline
		    1   & $(0, -9)$ 					& 1.0065\\
		    2   & $(1.5789, -9)$ 			& 0.9990\\
		    3   & $(0, -7.7368)$ 			& 0.9994\\
		    4   & $(30, -9)$ 				& 1.0183\\
		    5   & $(9.4737, -9)$ 			& 0.9967\\
		    6   & $(0, 15)$ 					& 1.0255\\
		    7   & $(20.5263, -2.6842)$ 	& 1.0162\\
		    8   & $(17.3684, -9)$ 			& 0.9942\\ \hline
		\end{tabular}
    	\end{center}
	\caption{\label{tab:snapshots_CDR}Selected snapshots and
                 residual deterioration rate. 
		}
	\end{subfigure}
	\end{minipage}
	\begin{minipage}{0.48\textwidth}
    	\centering
    \begin{subfigure}{0.98\textwidth}
        \includegraphics[width=\textwidth]{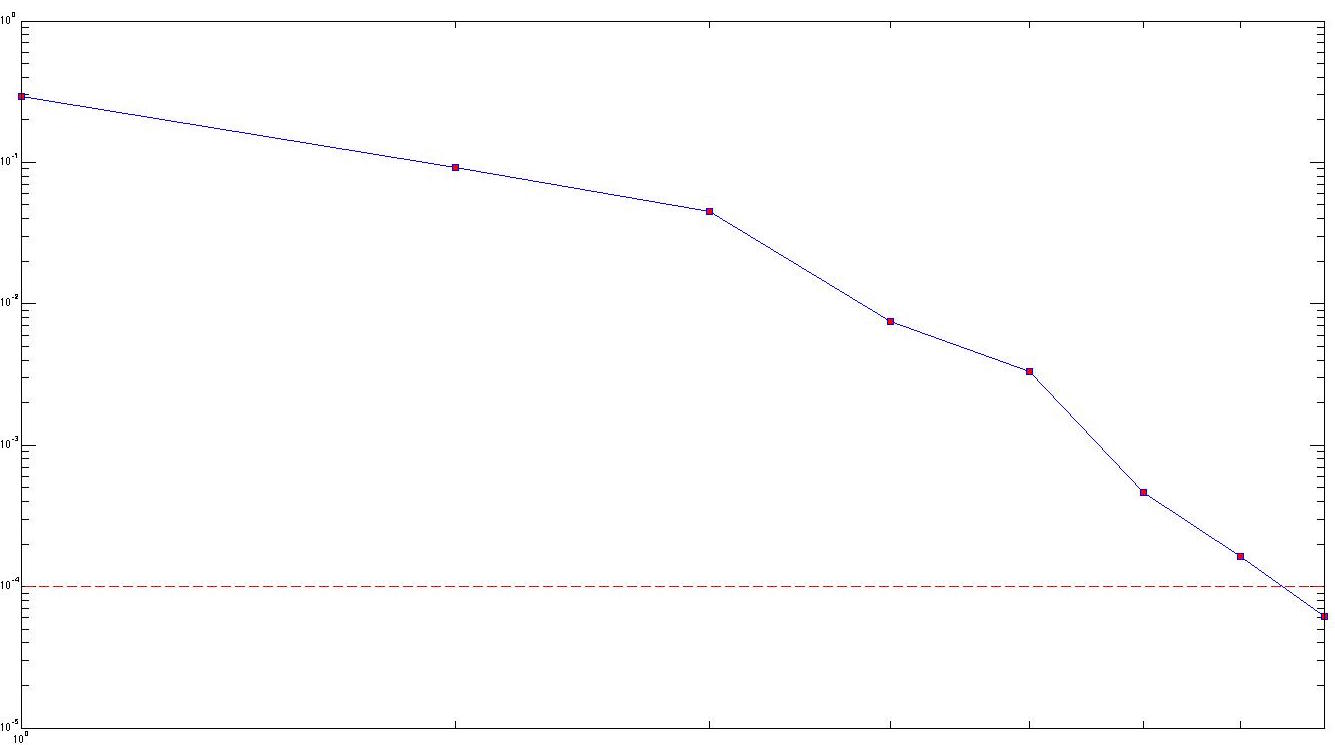}
        \caption{\footnotesize Weak Greedy training starting at $N=0$.}
        \label{fig:greedy_train_CDR}
    \end{subfigure}\newline
    \begin{subfigure}{0.98\textwidth}
        \includegraphics[width=\textwidth]{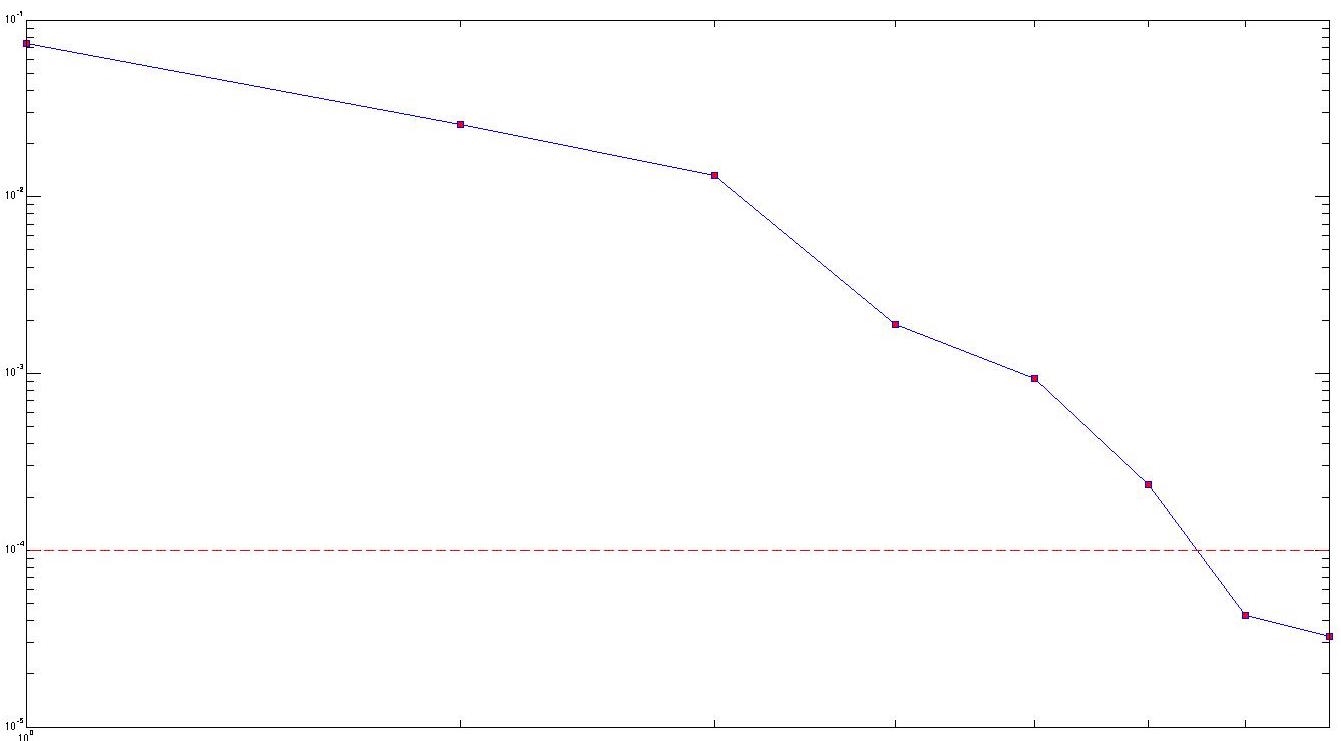}
        \caption{\footnotesize Average error estimator for 2500 test parameters
             with $N=1,\ldots,8$ basis functions.}
        \label{fig:test_avgs_CDR}
    \end{subfigure}
	\end{minipage}
       \caption{Greedy performance for the CDR problem.
       	 \label{fig:cdr}}
\end{figure}


\begin{appendix}

\section{A sharp residual estimate using normal equations}\label{App:A}

We consider Hilbert spaces $X$, $Y$ with their duals $X'$, $Y'$ induced by some pivot spaces. Let $B:X\to Y'$ be a linear operator that satisfies the Ne\v{c}as conditions, in particular $B$ is assumed to be continuous with constant $\gamma<\infty$ and to satisfy an inf-sup-condition with constant $\beta>0$. Thus, in particular, the operator equation $Bu=f$ has a unique solution $u\in X$ for any $f\in Y'$, i.e., $b(u,v)=\langle f,v\rangle_{Y'\times Y}$ for all $v\in Y$, where $\langle Bu,v\rangle_{Y'\times Y} =: b(u,v)$ for $u\in X$ and $v\in Y$.

Next, we consider the Petrov-Galerkin method with finite-dimensional spaces $X_h\subset X$, $Y_h\subset Y$ and
\begin{equation}\label{a:0}
	u_h\in X_h:\qquad 
	b(u_h, v_h) = \langle f, v_h\rangle_{Y'\times Y}\quad \forall v_h\in Y_h.
\end{equation}
We assume well-posedness also of the discrete problem independent of the discretization parameter $h$. 
The following C\'{e}a-type lemma is well-known
\begin{equation}\label{a:1}
	\| u-u_h\|_X \le \frac{\gamma}{\beta} \inf_{w_h\in X_h} \| u-w_h\|_X,
\end{equation}
see, e.g., \cite[Thm.\ 2]{MR1971217}. If $A: X\to X'$ is selfadjoint and positive definite with coercivity constant $\alpha>0$ (i.e., the elliptic case), this can be improved as follows
\begin{equation}\label{a:2}
	\| u-u_h\|_X \le {\frac{\sqrt\gamma}{\sqrt\alpha}} \inf_{w_h\in X_h} \| u-w_h\|_X.
\end{equation}
We associate a bilinear form to the operator $A$ as usual by $a(u,w):=\langle Au, w\rangle_{X'\times X}$ for $u,w\in X$.

Both estimates \eqref{a:1} and \eqref{a:2} relate the (Petrov-)Galerkin error to the error of the best approximation in $X_h$ -- both measured in an appropriate norm, here $\|\cdot\|_X$. Since the error is usually not available (the exact solution is unknown), but the residual is in some cases (e.g., both in RB `truth' discretizations and in adaptive wavelet methods), we want to relate the (Petrov-)Galerkin residual
$$
	r_b(u_h) := f- B u_h
	\qquad\text{or}\qquad
	\| r_b(u_h)\|_{Y'}
$$
to the best approximation residual $\inf\limits_{w_h\in X_h} \| r_b(w_h)\|_{Y'}$. The following estimate is straightforward by \eqref{a:1}
\begin{align}
	\| r_b(u_h)\|_{Y'}
	&= \| f - Bu_h\|_{Y'}
	=\kern-1pt \| B(u-u_h)\|_{Y'}  
	\kern-2pt\le\kern-1pt \gamma \| u-u_h\|_{X} 
	\le \frac{\gamma^2}{\beta}\kern-1pt \inf_{w_h\in X_h} \| u-w_h\|_X 
	\nonumber\\
	&
	\le \frac{\gamma^2}{\beta^2} \inf_{w_h\in X_h} \| B(u-w_h)\|_{Y'} 
	= \Big(\frac{\gamma}{\beta}\Big)^2 \inf_{w_h\in X_h} \| r_b(w_h)\|_{Y'}, 
	\label{a:3}
\end{align}
but the factor $\big(\frac{\gamma}{\beta}\big)^2$ is not satisfactory since it may be large. 

In the elliptic case, we can use the fact the Galerkin projection is the orthogonal projection of $u$ with respect to the energy scalar product $a(\cdot,\cdot)$ that induces the energy norm, i.e.,
$$
	\| w\|_a^2 := a(w,w),\,\, w\in X,
	\qquad
	\| f\|_{a'} := \sup_{w\in X} \frac{\langle f, w\rangle_{X'\times X}}{\| w\|_a},\,\, f\in X',
$$
(i.e., $\sqrt\alpha\, \| w\|_{X} \le \| w\|_{a} \le \sqrt\gamma\, \| w\|_{X}$ and $\frac1{\sqrt\gamma}\, \| f\|_{X'} \le \| f\|_{a'} \le \frac1{\sqrt\alpha}\, \| f\|_{X'}$), so that
\begin{align}
	\| r_a(u_h)\|_{X'}
	&\le \frac1{\sqrt\alpha} \| r_a(u_h)\|_{a'}
	= \frac1{\sqrt\alpha} \inf_{w_h\in X_h} \| r_a(w_h)\|_{a'} \nonumber \\
	&\le \frac{\sqrt\gamma}{\sqrt\alpha} \inf_{w_h\in X_h} \| r_a(w_h)\|_{X'}. 
	\label{a:4}
\end{align}

If we compare \eqref{a:2} and \eqref{a:4} -- both in the elliptic case -- we see that the factor $\frac{\sqrt\gamma}{\sqrt\alpha}$ is the same, both in the error relation in \eqref{a:2} and in the residual relation \eqref{a:4}. In the general case, however, we have a factor of $\frac{\gamma}{\beta}$ for the error relation in \eqref{a:1}, but the square $\big(\frac{\gamma}{\beta}\big)^2$ in the residual relation \eqref{a:4}.

The aim of this appendix is to improve and harmonize these estimates for the residual relations. To this end, we introduce some operators.
\begin{definition}\label{def:A1}
(a) The operator $B^+: Y'\to X'$ is defined as $\langle B^+ v', u\rangle_{X'\times X} := (Bu, v')_{Y'}$ for $v'\in Y'$ and all $u\in X$. \\
(b) The \emph{Riesz operator} $R_Y:Y'\to Y$ is defined as $\langle R_Y y', z'\rangle_{Y\times Y'} := (y',z')_{Y'}$ for all $y', z'\in Y'$. \\
(c) The \emph{dual operator} (Banach adjoint) $C':Y'\to X'$ of a linear operator $C:X\to Y$ is defined as $\langle C'y', x\rangle_{X'\times X} = \langle Cx, y'\rangle_{Y\times Y'}$, $x\in X$, $y\in Y$.
\end{definition}
We note in particular, that the operator $R_Y$ is invertible and coincides with the Riesz representation operator. Moreover, $R_Y';Y'\to Y$ is given by $\langle R_Y'y',z'\rangle_{Y\times Y'} = \langle R_Y z', y'\rangle_{Y\times Y'} = (y',z')_{Y'}$ for $y',z'\in Y'$ as well as $B':Y\to X'$ as $\langle B'y,x\rangle_{X'\times X} = \langle y, Bx\rangle_{Y\times Y'}$ for $x\in X$, $y\in Y$.

\begin{lemma}\label{La:a1}
	It holds that $B^+ = B' R_Y$.
\end{lemma}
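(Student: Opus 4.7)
The plan is to verify the claimed operator identity by testing both sides against an arbitrary $u \in X$ and using each of the three definitions in Definition \ref{def:A1} in turn. Since both $B^+$ and $B' R_Y$ are elements of $\mathcal{L}(Y', X')$, it suffices to show that for every $v' \in Y'$ and every $u \in X$, the dual pairings $\langle B^+ v', u\rangle_{X'\times X}$ and $\langle B' R_Y v', u\rangle_{X'\times X}$ coincide; the identity then follows from the non-degeneracy of the duality.

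First, I would unpack the left-hand side: by the defining relation for $B^+$, one has directly
\begin{equation*}
    \langle B^+ v', u\rangle_{X'\times X} = (Bu, v')_{Y'}.
\end{equation*}
Next, I would unpack the right-hand side in two steps. Setting $y := R_Y v' \in Y$ and using the definition of the Banach adjoint $B'$ gives
\begin{equation*}
    \langle B' R_Y v', u\rangle_{X'\times X} = \langle R_Y v', B u\rangle_{Y \times Y'}.
\end{equation*}
Then, applying the defining property of the Riesz operator $R_Y$ with the test functional $Bu \in Y'$ yields
\begin{equation*}
    \langle R_Y v', Bu\rangle_{Y \times Y'} = (v', Bu)_{Y'} = (Bu, v')_{Y'},
\end{equation*}
where the last equality is symmetry of the inner product on $Y'$. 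Combining these three displays shows that both sides of the putative identity equal $(Bu,v')_{Y'}$.

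Since $u \in X$ and $v' \in Y'$ were arbitrary, this gives $B^+ v' = B' R_Y v'$ in $X'$ for every $v' \in Y'$, hence $B^+ = B' R_Y$ as operators $Y' \to X'$. There is essentially no obstacle here: the proof is a matter of carefully chaining the three definitions, and the only point that needs a moment of care is to confirm that $R_Y$ may be fed into $B'$ (domains match, since $R_Y: Y' \to Y$ and $B': Y \to X'$), which is immediate.
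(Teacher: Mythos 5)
Your proof is correct and follows essentially the same route as the paper: both arguments test $B^+v'$ and $B'R_Yv'$ against an arbitrary $u\in X$ and chain the three definitions $\langle B^+v',u\rangle_{X'\times X}=(Bu,v')_{Y'}=\langle Bu,R_Yv'\rangle_{Y'\times Y}=\langle B'R_Yv',u\rangle_{X'\times X}$. The only difference is presentational (you unpack both sides to meet in the middle, the paper writes one chain of equalities), so there is nothing to add.
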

\begin{proof}
	Let $v'\in Y'$ and $u\in X$. Then, the following is easily seen, 
	$\langle B^+ v', u\rangle_{X'\times X}
		= (Bu, v')_{Y'}
		= \langle Bu, R_Y v'\rangle_{Y'\times Y}
		= \langle B' R_Y v', u\rangle_{X'\times X}$, 
	which proves the claim.
\end{proof}
\smallskip

As an immediate consequence, we get $\| B^+\|_{\cL(Y',X')} = \| B\|_{\cL(X,Y')}$, i.e., the estimate $\beta \| v'\|_{Y'} \le \| B^+ v'\|_{Y'} \le \gamma \| v'\|_{Y'}$ for all $v'\in Y'$.

\begin{proposition}\label{prop:a3}
Let $B: X\to Y'$ be linear and invertible. Then,

(a) $A:=B^+B: X\to X'$ is elliptic w.r.t.\ the norm $\|B\cdot\|_{Y'}$ with $\alpha=\gamma=1$.

(b) $Bu=f$ in $Y'$ if and only if $Au=B^+f=:g$ in $X'$. 
\end{proposition}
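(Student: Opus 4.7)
My plan is to observe that both parts follow almost immediately from the defining identity $\langle B^+ v', u\rangle_{X'\times X} = (Bu, v')_{Y'}$ stated in Definition \ref{def:A1}(a), together with the factorization $B^+ = B' R_Y$ from Lemma \ref{La:a1}. The whole content of the proposition is that passing from $B$ to the normal-equation operator $A = B^+ B$ replaces the original operator by an operator whose associated bilinear form is exactly the $Y'$-inner product of the images under $B$; once this identification is made, everything is tautological.

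For part (a), I would simply specialize the defining identity of $B^+$ with the choice $v' := Bu$. This yields
\[
 \langle Au, u\rangle_{X'\times X}
 = \langle B^+ Bu, u\rangle_{X'\times X}
 = (Bu, Bu)_{Y'}
 = \|Bu\|_{Y'}^2,
\]
which is both the coercivity (lower bound with $\alpha=1$) and, via Cauchy--Schwarz applied to $\langle Aw,v\rangle_{X'\times X} = (Bw, Bv)_{Y'}$, the continuity (upper bound with $\gamma=1$) with respect to the seminorm $\|B\cdot\|_{Y'}$. Since $B$ is assumed invertible, this seminorm is in fact a norm on $X$, equivalent to $\|\cdot\|_X$ by the inf-sup/continuity bounds, so the statement that $A$ is elliptic on $X$ is meaningful.

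For part (b), the forward implication is immediate: apply $B^+$ to both sides of $Bu=f$ to obtain $Au = B^+ B u = B^+ f = g$. For the converse, I need to know that $B^+$ is injective (equivalently, that $B^+ w = 0$ implies $w=0$). Here I would invoke Lemma \ref{La:a1}: $B^+ = B' R_Y$. The Riesz map $R_Y: Y'\to Y$ is an isometric isomorphism, and the Banach adjoint $B': Y \to X'$ of the invertible operator $B:X\to Y'$ is also an isomorphism (its norm equals that of $B$, and its invertibility with continuous inverse follows from the inf-sup/continuity pair satisfied by $B$ through the open mapping theorem, or directly from $(B')^{-1} = (B^{-1})'$). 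Hence $B^+$ is an isomorphism, and from $B^+(Bu - f) = Au - g = 0$ we conclude $Bu = f$.

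I do not see any real obstacle: both (a) and (b) are formal consequences of the definitions, and the only substantive input is the invertibility of $B^+$ in the reverse direction of (b), which is packaged in Lemma \ref{La:a1}. The conceptual payoff, which motivates the rest of Appendix \ref{App:A}, is that $A$ inherits ellipticity with optimal constants $\alpha=\gamma=1$ with respect to the natural energy-type norm $\|B\cdot\|_{Y'}$, so that the elliptic-case residual estimate \eqref{a:4} can subsequently be transferred to the general Petrov--Galerkin setting by working on the normal equations.
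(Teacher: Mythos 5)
Your proof is correct and follows essentially the same route as the paper: part (a) is the identical computation $\langle Au,w\rangle_{X'\times X}=(Bu,Bw)_{Y'}$ plus Cauchy--Schwarz, and part (b) rests on Lemma \ref{La:a1} together with the invertibility of $B$ and $R_Y$. The only cosmetic difference is in (b), where you apply $B^+$ to the equation and use its injectivity, while the paper tests against all $w\in X$ and uses the invertibility of the adjoint $(B^+)'=R_Y'B:X\to Y$ --- two dual formulations of the same fact.
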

\begin{proof}
Let us first show that $A$ is selfadjoint. In fact, for $u,w\in X$, we have
\begin{align}
	\langle Au,w\rangle_{X'\times X}
	&= \langle B^+B u,w\rangle_{X'\times X}
	= (Bu, Bw)_{Y'},
\end{align}
	which allows to exchange $u$ and $w$. This implies that
\begin{align}
	\langle Au,u\rangle_{X'\times X} = \| Bu\|_{Y'}^2,
		\qquad
	\langle Au,w\rangle_{X'\times X} \le \| Bu\|_{Y'}\, \| Bw\|_{Y'},
\end{align}
which proves (a). In order to prove (b), let $w\in X$ arbitrary. Then,
\begin{align*}
 	\langle g, w\rangle_{X'\times X} 
	&= \langle B^+f , w\rangle_{X'\times X}
	= \langle B' R_Y f , w\rangle_{X'\times X}
	= \langle R_Y f , B w\rangle_{Y\times Y'} 
	\\ 
	&= \langle  f , R'_Y B w\rangle_{Y'\times Y}  
\end{align*}
and on the other hand,
\begin{align*}
	\langle Au, w\rangle_{X'\times X}
	&= \langle B^+B u,w\rangle_{X'\times X}
	= \langle B' R_Y B u,w\rangle_{X'\times X}
	= \langle R_Y B u, Bw\rangle_{Y\times Y'}\\
	&= \langle  B u, R'_Y Bw\rangle_{Y'\times Y}.
\end{align*}
Since both $R_Y$ and $B$ are invertible, also the operator $R_Y'B= (B^+)': X\to Y$ also invertible, so that the assertion is proven.
\end{proof}

Let us now consider the Galerkin problem with respect to the elliptic operator $A$ and a finite-dimensional space $X_h\subset X$, i.e., 
\begin{equation}\label{Eq:Gal}
	\bar u_h\in X_h:\quad
	a(\bar u_h, w_h) 
	= \langle A\bar u_h, w_h\rangle_{X'\times X} 
	= \langle g, w_h\rangle_{X'\times X}
	\quad
	\forall w_h\in X_h.
\end{equation}
This is equivalent to the normal equations $(B u_h, Bw_h)_{Y'}= (f, Bw_h)_{Y'}$ for all $w_h\in X_h$.  It should be noted that \eqref{Eq:Gal} is \emph{not} the normal equation of the discrete Petrov-Galerkin problem \eqref{a:0}. In fact, \eqref{a:0} is equivalent to $B_hu_h=f_h$ in $Y_h=Y_h'$ (finite dimensional), where $B_h:= B_{|Y_h\times X_h}$, $f_h:=f_{|Y_h}$. Thus, the (discrete) normal equation reads $A_h u_h = B_h^Tf_h$ with $A_h:=B_h^TB_h$. On the other hand, \eqref{Eq:Gal} reads $\bar A_h \bar u_h = g_h$ in $X_h$, where $\bar A_h := (B^+B)_{|X_h\times X_h}$ and $g_h:=(B^+f)_{|X_h}$. This means that $B^+B$ is first multiplied exactly (on the operator level) and then discretized, whereas $A_h$ is first discretized and then multiplied on the discrete level. 

\begin{proposition}\label{prop:a4}
	Under the above assumptions, we have
	\begin{equation}\label{eq:bop}
		\| r_b(\bar u_h) \|_{Y'} 
		= \| B(u-\bar u_h)\|_{Y'} 
		= \inf_{w_h\in X_h} \| B(u-w_h)\|_{Y'}
		= \inf_{w_h\in X_h} \| r_b(w_h)\|_{Y'}
	\end{equation}
	as well as
	\begin{equation}\label{eq:bopa}
		\| r_a(\bar u_h) \|_{X'} 
		\le \frac{\gamma}{\beta} \inf_{w_h\in X_h} \| r_a(w_h)\|_{X'}.
	\end{equation}
	
	If $B:X\to X$ is elliptic, we get $\| r_b(\bar u_h) \|_{X'} 
		\le \frac{\sqrt\gamma}{\sqrt\alpha} \inf_{w_h\in X_h} \| r_b(w_h)\|_{X'}$ as well as $\| r_a(\bar u_h) \|_{X'} 
		\le \frac{\gamma}{\alpha} \inf_{w_h\in X_h} \| r_a(w_h)\|_{X'}$ for $A=B^+B$.
\end{proposition}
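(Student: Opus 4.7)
The strategy rests on the observation in Proposition~\ref{prop:a3}(a): $A = B^+B$ is self-adjoint and elliptic with respect to the norm $\|B\cdot\|_{Y'}$ with constants $\alpha_A = \gamma_A = 1$. Thus $\|B\cdot\|_{Y'}$ is literally the energy norm induced by $A$ on $X$, with inner product $\langle Au, w\rangle_{X'\times X} = (Bu, Bw)_{Y'}$. Galerkin orthogonality for $\bar u_h$ reads $(B(u - \bar u_h), Bw_h)_{Y'} = 0$ for all $w_h \in X_h$, which is orthogonality in this inner product. Standard Hilbert-space theory then identifies $\bar u_h$ as the best approximation of $u$ in $X_h$ with respect to $\|B\cdot\|_{Y'}$, giving $\|B(u - \bar u_h)\|_{Y'} = \inf_{w_h \in X_h} \|B(u - w_h)\|_{Y'}$. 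Since $Bu = f$, we have $\|B(u-v)\|_{Y'} = \|r_b(v)\|_{Y'}$ for every $v \in X$, which delivers \eqref{eq:bop} at once.

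For \eqref{eq:bopa} the plan is to sandwich $\|r_a(v)\|_{X'} = \|B^+B(u-v)\|_{X'}$ between two multiples of $\|B(u-v)\|_{Y'}$ using the norm of $B^+ \colon Y' \to X'$. The upper direction uses the continuity $\|B^+\| = \gamma$ noted after Lemma~\ref{La:a1}, giving $\|r_a(\bar u_h)\|_{X'} \le \gamma \|B(u-\bar u_h)\|_{Y'}$. The lower direction requires $\|B^+ y'\|_{X'} \ge \beta \|y'\|_{Y'}$ for all $y' \in Y'$, which I would derive from the second (dual) inf-sup condition in \eqref{Necas}: writing $\langle B^+y', u\rangle_{X'\times X} = (Bu, y')_{Y'} = \langle Bu, R_Y y'\rangle_{Y'\times Y} = b(u, R_Y y')$ and using $\|R_Y y'\|_Y = \|y'\|_{Y'}$ (isometry of the Riesz map), the dual inf-sup applied to the test element $R_Y y' \in Y$ yields the claim. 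Substituting $y' = B(u-w_h)$ gives $\|r_a(w_h)\|_{X'} \ge \beta \|B(u-w_h)\|_{Y'}$; combining with the best-approximation identity from the first step produces the factor $\gamma/\beta$.

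The elliptic-case addenda follow by specialization. With $Y = X$ and $B$ coercive, one has $\beta = \alpha$, so the $\|r_a\|_{X'}$ bound is the direct restriction of \eqref{eq:bopa}. The sharper $\sqrt{\gamma/\alpha}$ bound on $\|r_b\|_{X'}$ is cleanest when worked out in the $B$-energy norm $\|v\|_B^2 := \langle Bv, v\rangle_{X'\times X}$: there the standard $B$-Galerkin projection is orthogonal, the duality identity $\|Bv\|_{B'} = \|v\|_B$ holds by self-adjointness, and the equivalences $\sqrt{\alpha}\|\cdot\|_X \le \|\cdot\|_B \le \sqrt{\gamma}\|\cdot\|_X$ dualize to $\|\cdot\|_{B'}$--$\|\cdot\|_{X'}$ equivalence with constants $1/\sqrt{\gamma}$ and $1/\sqrt{\alpha}$. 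Transporting the best-approximation identity in $\|\cdot\|_B$ through these equivalences then costs exactly the factor $\sqrt{\gamma/\alpha}$.

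The only genuine content, and the place I would be most careful when writing out a full proof, is the lower bound $\|B^+ y'\|_{X'} \ge \beta \|y'\|_{Y'}$ --- essentially transporting the dual inf-sup of $B$ through the Riesz identification. Everything else is bookkeeping once Proposition~\ref{prop:a3}(a) has identified $\|B\cdot\|_{Y'}$ as the energy norm of the normal-equations operator, which is the conceptual point that makes the estimate sharper than the naive chain \eqref{a:3}.
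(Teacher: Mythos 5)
Your proof follows the same route as the paper's: \eqref{eq:bop} is obtained from Galerkin orthogonality as the best-approximation property in the energy norm $\|B\cdot\|_{Y'}$ of $A=B^+B$, and \eqref{eq:bopa} follows by sandwiching with the two-sided bound $\beta\|y'\|_{Y'}\le\|B^+y'\|_{X'}\le\gamma\|y'\|_{Y'}$, which the paper simply asserts after Lemma \ref{La:a1} and which you correctly derive from the dual inf-sup condition and the isometry of the Riesz map. The only slip is the claim that $\beta=\alpha$ in the elliptic case --- coercivity only gives $\beta\ge\alpha$ --- but since then $\gamma/\beta\le\gamma/\alpha$, the stated elliptic bound still follows and nothing breaks.
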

\begin{proof}
	The result in \eqref{eq:bop} follows from Galerkin orthogonality in \eqref{eq:a8} for any $w_h\in X_h$
	\begin{align}
		\| B(u-\bar u_h)\|_{Y'}^2
		&= (B(u-\bar u_h), B(u-\bar u_h))_{Y'}
		= \langle A(u-\bar u_h), u-\bar u_h \rangle_{X'\times X} 
			\nonumber \\
		&= a(u-\bar u_h, u-\bar u_h)
			\nonumber \\
		&= a(u-\bar u_h, u-w_h) = (B(u-\bar u_h), B(u-w_h))_{Y'}
			\label{eq:a8}	 \\
		&\le \| B(u-\bar u_h)\|_{Y'}\, \| B(u-w_h)\|_{Y'}, 
			\nonumber
	\end{align}
	which proves \eqref{eq:bop}. Then, we use \eqref{eq:bop} to obtain
		\begin{align*}
			\| r_a(\bar u_h)\|_{X'}
			&= \| B^+ r_b(\bar u_h)\|_{X'}
			\le \gamma\, \| r_b(\bar u_h)\|_{Y'} 
			= \gamma \inf_{w_h\in X_h} \| r_b(w_h)\|_{Y'} \\
			&\le \frac\gamma\alpha \inf_{w_h\in X_h} \| B^+ r_b(w_h)\|_{X'}
			\le \frac\gamma\alpha \inf_{w_h\in X_h} \| r_a(w_h)\|_{X'},
		\end{align*}
		so that \eqref{eq:bopa}  is proven. The estimate in the elliptic case is proven as \eqref{a:4}.
\end{proof}

\begin{remark}\label{rem:a5}
	Let us comment on the previous result. Obviously, \eqref{eq:bop} significantly improves \eqref{a:3} so that the `ellipticity gap' (i.e., the quotient of the factor in the respective relation for the elliptic and the general inf-sup case) between \eqref{eq:bopa} and \eqref{a:4} is the same as the gap between \eqref{a:2} and \eqref{a:1} for the error, namely the factor $\frac{\sqrt\gamma}{\sqrt\alpha}$.
	\end{remark}

We point out again that $\bar u_h\in X_h$ is the solution of $\bar A_h \bar u_h=g_h$ which is a discretization of the infinite-dimensional version of the normal equations. As described in Section \ref{sec:awgm}, this is exactly what is done in adaptive wavelet methods (since there an optimal preconditioning of large classes of operators $B$ is available which means that one avoids the usual drawback of squaring a possibly bad condition number of $B$ when using normal equations). 

In general, the discrete approximation $\bar u_h$ of the infinite-dimensional normal equation differs from the Petrov-Galerkin solution $u_h$ in \eqref{a:0} (which, in turn, coincides with the solution of the discrete normal equations). Only in a very specific situation (which we will indicate now), both approximations coincide.

\begin{proposition}\label{prop:A6}
	Let $X_h\in X$ and $Y_h\in Y$ be finite-dimensional spaces. Consider
	\begin{align}
		\bar u_h\in X_h: 
			&& a(\bar u_h, w_h) &= \langle B^+ f, w_h\rangle_{X'\times X}
			& \forall w_h\in X_h,  
			\label{eq:G}\\
		u_h\in X_h: 
			&& b(u_h, v_h) &= \langle f, v_h\rangle_{Y'\times Y}
			& \forall v_h\in Y_h.
			\label{eq:PG}
	\end{align}
	If $X_h$ and $Y_h$ are related by $Y_h=(B^+)'(X_h)$, then $\bar u_h=u_h$.
\end{proposition}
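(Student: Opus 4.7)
My plan is to rewrite the Galerkin equation \eqref{eq:G} so that it becomes the Petrov--Galerkin equation \eqref{eq:PG}, using the factorization $B^+ = B'R_Y$ from Lemma~\ref{La:a1} to relocate $B$ onto the test function and thereby produce an element of $Y_h$. The identity $Y_h = (B^+)'(X_h)$ is then exactly what is needed to make the set of resulting test functions fill out $Y_h$, and uniqueness of \eqref{eq:PG} finishes the argument.

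Concretely, I would first fix an arbitrary $w_h\in X_h$ and unfold the right-hand side of \eqref{eq:G} using Lemma~\ref{La:a1}:
\begin{equation*}
    \langle B^+ f, w_h\rangle_{X'\times X}
    = \langle B' R_Y f, w_h\rangle_{X'\times X}
    = \langle R_Y f, B w_h\rangle_{Y\times Y'}
    = \langle f, R_Y' B w_h\rangle_{Y'\times Y}.
\end{equation*}
The same manipulation applied to the left-hand side yields
\begin{equation*}
    a(\bar u_h, w_h)
    = \langle B^+ B \bar u_h, w_h\rangle_{X'\times X}
    = \langle B\bar u_h, R_Y' B w_h\rangle_{Y'\times Y}
    = b(\bar u_h, R_Y' B w_h).
\end{equation*}
Thus \eqref{eq:G} is equivalent to
\begin{equation*}
    b(\bar u_h, v_h) = \langle f, v_h\rangle_{Y'\times Y}
    \qquad\text{for all } v_h = R_Y' B w_h,\ w_h\in X_h.
\end{equation*}

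Next, I would verify that the map taking $w_h\in X_h$ to the test function just produced coincides with the Banach adjoint $(B^+)'\colon X\to Y$. This is essentially read off from the proof of Proposition~\ref{prop:a3}(b), where $(B^+)'=R_Y'B$ was implicitly identified; it can be confirmed directly by computing $\langle (B^+)' w, y'\rangle_{Y\times Y'} = \langle B^+ y', w\rangle_{X'\times X}=\langle y', R_Y' B w\rangle_{Y'\times Y}$ for $w\in X$, $y'\in Y'$. Under the hypothesis $Y_h = (B^+)'(X_h)$, as $w_h$ ranges over $X_h$ the vector $v_h = (B^+)' w_h = R_Y' B w_h$ ranges over all of $Y_h$. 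Hence $\bar u_h\in X_h$ satisfies \eqref{eq:PG}; uniqueness of the Petrov--Galerkin solution (guaranteed by the inf--sup stability assumed for \eqref{a:0}) then forces $\bar u_h = u_h$.

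The only genuine subtlety is the bookkeeping of which dual pairing lives on which space, in particular confirming that the operator $R_Y' B$ truly equals $(B^+)'$ acting $X\to Y$. Everything else is a short chain of identifications. I do not expect any real technical obstacle beyond this careful identification of adjoints; the assumed inf--sup stability of \eqref{a:0} only enters at the very end to invoke uniqueness.
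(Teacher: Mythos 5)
Your proof is correct and follows essentially the same route as the paper's: both rewrite the two sides of \eqref{eq:G} via $B^+=B'R_Y$ to obtain $a(\bar u_h,w_h)=b(\bar u_h,(B^+)'w_h)$ and $\langle B^+f,w_h\rangle_{X'\times X}=\langle f,(B^+)'w_h\rangle_{Y'\times Y}$, identify $(B^+)'=R_Y'B$, and use $Y_h=(B^+)'(X_h)$ to let the test functions sweep out $Y_h$. The only cosmetic difference is that you argue one direction and invoke uniqueness of \eqref{eq:PG}, while the paper shows both implications and appeals to uniqueness of both problems; either variant is fine.
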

\begin{proof}
	We follow the lines of the proof of Proposition \ref{prop:a3} (b), i.e.,
	$a(u_h, w_h) = b(u_h, R_Y'B w_h) = b(u_h, (B^+)'w_h)$ on one hand and on the other 
	$\langle B^+ f, w_h\rangle_{X'\times X} = \langle f, (B^+)' w_h\rangle_{Y'\times Y} = \langle f, R_Y' B w_h\rangle_{Y'\times Y} $.
	
	Let $\bar u_h$ solve \eqref{eq:G}. Since $B^+$ is invertible and due to $Y_h=(B^+)'(X_h)$, for any $v_h\in Y_h$, there is a unique $w_h\in X_h$ such that $v_h=(B^+)' w_h$. Hence, $\bar u_h$ also solves \eqref{eq:PG}. 
	On the other hand, let $u_h$ solve \eqref{eq:PG}. For any $w_h\in X_h$, there is a unique $v_h\in Y_h$ such that $v_h=(B^+)' w_h$ which implies that $u_h$ is a solution of  \eqref{eq:G} as well. 
	Since both \eqref{eq:G} and \eqref{eq:PG} admit a unique solution, we get $\bar u_h=u_h$.
\end{proof}

\begin{remark}\label{rem:a7}
This latter result shows that the Petrov-Galerkin solution (which coincides with the discrete normal equation solution) is the same as the approximation of the infinite-dimensional normal equation solution provided that trial and test spaces are chosen appropriately. In that case, we obtain the optimal residual relation (for $r_b$ even with constant $1$) from Proposition \ref{prop:a4} -- otherwise we cannot hope for it.
\end{remark}

\end{appendix}

\bibliographystyle{plain}
\bibliography{literature}

\begin{thebibliography}{10}

\bibitem{MR3043486}
A.~Abdulle and Y.~Bai.
\newblock Adaptive reduced basis finite element heterogeneous multiscale
  method.
\newblock {\em Comput. Methods Appl. Mech. Engrg.}, 257:203--220, 2013.

\bibitem{Babuska}
I.~Babu{\v{s}}ka.
\newblock Error-bounds for finite element method.
\newblock {\em Numer. Math.}, 16:322--333, 1970/1971.

\bibitem{Barrault2004667}
M.~Barrault, Y.~Maday, N.C. Nguyen, and A.T. Patera.
\newblock An empirical interpolation method: application to efficient
  reduced-basis discretization of partial differential equations.
\newblock {\em C. R. Acad. Sci. Paris, Ser. I}, 339(9):667 -- 672, 2004.

\bibitem{GreedyConv}
P.~Binev, A.~Cohen, W.~Dahmen, R.A. DeVore, G.~Petrova, and P.~Wojtaszczyk.
\newblock Convergence rates for greedy algorithms in reduced basis methods.
\newblock {\em SIAM J.\ Math.\ Anal.}, 43(3):1457--1472, 2011.

\bibitem{MR2452388}
T.~Bui-Thanh, K.~Willcox, and O.~Ghattas.
\newblock Model reduction for large-scale systems with high-dimensional
  parametric input space.
\newblock {\em SIAM J. Sci. Comput.}, 30(6):3270--3288, 2008.

\bibitem{Carlberg14}
K.\ Carlberg.
\newblock Adaptive {$h$}-refinement for reduced-order models.
\newblock {\em Int.\ J.\ Numer.\ Meth.\ Engng.}, 102:1192--1210, 2015.

\bibitem{Chegini:2011}
N.~G. Chegini and R.P. Stevenson.
\newblock {Adaptive wavelet schemes for parabolic problems: sparse matrices and
  numerical results}.
\newblock {\em {SIAM J. Numer. Anal.}}, 49(1):182--212, 2011.

\bibitem{CDD01}
A.~Cohen, W.~Dahmen, and R.A. DeVore.
\newblock {Adaptive wavelet methods for elliptic operator equations:
  convergence rates}.
\newblock {\em Math. Comput.}, 70(233):27--75, 2001.

\bibitem{CDD02}
A.~Cohen, W.~Dahmen, and R.A. DeVore.
\newblock {Adaptive wavelet methods II - beyond the elliptic case}.
\newblock {\em Found. Comput. Math.}, 2:203--245, 2002.

\bibitem{DautrayLions}
R.~Dautray and J.L. Lions.
\newblock {\em Mathematical analysis and numerical methods for science and
  technology. Evolution Problems I}, volume~5.
\newblock Springer-Verlag, Berlin, 1992.

\bibitem{Dijkema:Diss}
T.~Dijkema.
\newblock {\em {Adaptive tensor product wavelet methods for solving PDEs}}.
\newblock PhD thesis, Universiteit Utrecht, 2009.

\bibitem{MR2882313}
M.~Drohmann, B.~Haasdonk, and M.~Ohlberger.
\newblock Adaptive reduced basis methods for nonlinear convection-diffusion
  equations.
\newblock In {\em Finite volumes for complex applications. {VI}. {P}roblems \&
  perspectives}, volume~4 of {\em Springer Proc. Math.}, pages 369--377.
  Springer, Heidelberg, 2011.

\bibitem{Gantumur:2007}
T.~Gantumur, H.~Harbrecht, and R.P. Stevenson.
\newblock {An optimal adaptive wavelet method without coarsening of the
  iterands}.
\newblock {\em {Math. Comput.}}, 76(258):615--629, 2007.

\bibitem{GreplThesisPaper}
M.A. Grepl and A.T. Patera.
\newblock A posteriori error bounds for reduced-basis approximations of
  parameterized parabolic partial differential equations.
\newblock {\em ESAIM Math. Model. Numer. Anal.}, 39(1):157--181, 2005.

\bibitem{PODGreedy}
B.~Haasdonk and M.~Ohlberger.
\newblock Reduced basis method for finite volume approximations of
  parameterized linear evolution equations.
\newblock {\em ESAIM Math. Model. Numer. Anal.}, 42:277--302, 2008.

\bibitem{MR3177844}
J.S. Hesthaven, B.~Stamm, and S.~Zhang.
\newblock Efficient greedy algorithms for high-dimensional parameter spaces
  with applications to empirical interpolation and reduced basis methods.
\newblock {\em ESAIM Math. Model. Numer. Anal.}, 48(1):259--283, 2014.

\bibitem{MR2367928}
D.B.P. Huynh, G.~Rozza, S.~Sen, and A.T. Patera.
\newblock A successive constraint linear optimization method for lower bounds
  of parametric coercivity and inf-sup stability constants.
\newblock {\em C. R. Math. Acad. Sci. Paris}, 345(8):473--478, 2007.

\bibitem{Kestler:Diss}
S.~Kestler.
\newblock {\em {On the adaptive tensor product wavelet Galerkin method with
  applications in finance}}.
\newblock PhD thesis, {University of Ulm}, 2013.

\bibitem{MT-LS-AWGM}
S.~Kestler, K.~Steih, and K.~Urban.
\newblock {An efficient space-time wavelet Galerkin method for time-periodic
  parabolic partial differential equations}.
\newblock {\em Math.\ Comput.}, to appear, 2015.

\bibitem{Kestler:2012d}
S.~Kestler and R.P. Stevenson.
\newblock {An efficient approximate residual evaluation in the adaptive tensor
  product wavelet method}.
\newblock {\em {J. Sci. Comp.}}, 2013.

\bibitem{Kestler:2012c}
S.~Kestler and R.P. Stevenson.
\newblock Fast evaluation of system matrices w.r.t. multi-tree collections of
  tensor product refinable basis functions.
\newblock {\em J. Comput. Appl. Math.}, 260:103--116, 2014.

\bibitem{MR3123826}
Y.~Maday and B.~Stamm.
\newblock Locally adaptive greedy approximations for anisotropic parameter
  reduced basis spaces.
\newblock {\em SIAM J. Sci. Comput.}, 35(6):A2417--A2441, 2013.

\bibitem{Necas}
J.~Ne{\v{c}}as.
\newblock Sur une m\'ethode pour r\'esoudre les \'equations aux d\'eriv\'ees
  partielles du type elliptique, voisine de la variationnelle.
\newblock {\em Ann. Scuola Norm. Sup. Pisa (3)}, 16:305--326, 1962.

\bibitem{NSV}
R.H.\ Nochetto, K.G.\ Siebert, and A.~Veeser.
\newblock Theory of adaptive finite element methods: an introduction.
\newblock In {\em Multiscale, nonlinear and adaptive approximation}, pages
  409--542. Springer, Berlin, 2009.

\bibitem{RozzaRBMIntro}
G.~Rozza, D.B.P. Huynh, and A.T. Patera.
\newblock Reduced basis approximation and a posteriori error estimation for
  affinely parametrized elliptic coercive partial differential equations.
\newblock {\em Archives of Computational Methods in Engineering},
  15(3):229--275, 2008.

\bibitem{MR2281777}
G.~Rozza and K.~Veroy.
\newblock On the stability of the reduced basis method for {S}tokes equations
  in parametrized domains.
\newblock {\em Comput. Methods Appl. Mech. Engrg.}, 196(7):1244--1260, 2007.

\bibitem{Rupp:Diss}
A.~Rupp.
\newblock {\em High Dimensional Wavelet Methods for Structures Financial
  Products}.
\newblock PhD thesis, University of Ulm, 2013.

\bibitem{SchwabStevenson}
C.~Schwab and R.P. Stevenson.
\newblock {Space-time adaptive wavelet methods for parabolic evolution
  problems}.
\newblock {\em Math. Comput.}, 78(267):1293--1318, 2009.

\bibitem{Steih:Diss}
K.~Steih.
\newblock {\em Reduced Basis Methods for Time-Periodic Parametric Partial
  Differential Equations}.
\newblock PhD thesis, University of Ulm, 2014.

\bibitem{Stevenson:2009}
R.P. Stevenson.
\newblock {Adaptive wavelet methods for solving operator equations: An
  overview}.
\newblock In R.A. DeVore and A.~Kunoth, editors, {\em {Multiscale, Nonlinear
  and Adaptive Approximation}}, pages 543--598. Springer (Berlin), 2009.

\bibitem{Urban:WaveletBook}
K.~Urban.
\newblock {\em Wavelet methods for elliptic partial differential equations}.
\newblock Oxford University Press, 2009.

\bibitem{Zeeb}
K.~Urban, S.~Volkwein, and O.~Zeeb.
\newblock Greedy sampling using nonlinear optimization.
\newblock In A.~Quarteroni and G.~Rozza, editors, {\em Reduced Order Methods
  for modeling and computational reduction}, pages 137--157. Springer
  Switzerland, 2014.

\bibitem{MR1971217}
J.~Xu and L.~Zikatanov.
\newblock Some observations on {B}abu\v ska and {B}rezzi theories.
\newblock {\em Numer. Math.}, 94(1):195--202, 2003.

\bibitem{Masa}
M.\ Yano.
\newblock {A Reduced Basis Method with Exact-Solution Certificates for
  Symmetric Coercive Equations}.
\newblock {\em Comp.\ Meth.\ Appl.\ Mech.\ Engin.}, 287:290--309, 2015.

\bibitem{Masa:New}
M.\ Yano.
\newblock A minimum-residual mixed reduced basis method: Exact residual
  certification and simultaneous finite-element reduced-basis refinement.
\newblock {\em Math.\ Model.\ Numer.\ Anal.}, {to appear}, 2015.

\end{thebibliography}

\end{document}